\documentclass[11pt,a4paper]{article} %article文档

\usepackage[ruled,linesnumbered]{algorithm2e} %排版算法
\usepackage{amscd} %简单交换图
\usepackage{amsfonts} %字体
\usepackage{amsmath} %数学公式
\usepackage{amssymb} %更多公式符号
\usepackage{amsthm} %定理环境
\usepackage{bm} %允许公式粗体
\usepackage{booktabs} %改善表格
\usepackage{delarray} %矩阵
\usepackage{extarrows} %箭头
\usepackage{geometry} %页面布局
\usepackage{graphicx} %插图
\usepackage{hyperref} %超链接
\usepackage{longtable} %长表格
\usepackage{tikz-cd} %交换图
\usepackage[all]{xy} %交换图
\usepackage[affil-it]{authblk} %标题

\usepackage{adjustbox}
\usepackage{booktabs} %美化表格（可选）
\usepackage{caption}
\usepackage{color}
\usepackage{diagbox} %用来画斜线的宏包
\usepackage{enumitem} %编号
\usepackage{float}
\usepackage{hyperref}%创建超文本链接和PDF书签
\usepackage{indentfirst} %首行缩进
\usepackage{makecell} %表格单元格换行
\usepackage{mathrsfs} %数学字体宏包
\usepackage{mathtools} %数学工具宏包
\usepackage{multirow} %跨行表格宏包
\usepackage{relsize} %缩放宏包
\usepackage{subfigure}

\setlist[enumerate]{itemsep=0pt,parsep=0pt}

\title{On the $K$-extensions between Serre weights for unramified $\GL_3$}
\author{Yitong Wang\thanks{E-mail address: \texttt{yitongw.wang@utoronto.ca}}}
\date{}
\allowdisplaybreaks[4] %多行公式换页

\def\FF{\mathbb{F}}
\def\GG{\mathbb{G}}

\def\ZZ{\mathbb{Z}}

\def\NNN{\mathbb{Z}_{\geq0}}

\def\cJ{\mathcal{J}}

\def\dim{\operatorname{dim}}

\def\Ext{\operatorname{Ext}} %%% new

\def\GL{\operatorname{GL}}

\def\Hom{\operatorname{Hom}}
\def\id{\operatorname{id}}

\def\Ind{\operatorname{Ind}}

\def\JH{\operatorname{JH}}
\def\Ker{\operatorname{Ker}}

\def\Mat{\operatorname{Mat}}
\def\max{\operatorname{max}}

\def\min{\operatorname{min}}
\def\mod{\operatorname{mod}}

\def\SL{\operatorname{SL}}

\def\SL{\operatorname{SL}}
\def\soc{\operatorname{soc}}

\def\eqdef{\overset{\mathrm{def}}{=}}
\def\Fp{\mathbb{F}_p}

\def\Fq{\mathbb{F}_q}
\def\iff{\Leftrightarrow}

\def\into{\hookrightarrow}
\def\inv{^{-1}}
\def\ism{\stackrel{\sim}{\rightarrow}}

\def\OL{\mathcal{O}_L}

\def\Qp{\mathbb{Q}_p}

\def\sprod{\scalebox{1}{$\prod$}}
\def\ssum{\scalebox{1}{$\sum$}}

\def\x{^{\times}}

\newcommand{\ang}[1]{\langle#1\rangle}

\newcommand{\bbbra}[1]{\left[#1\right]}
\newcommand{\bbra}[1]{\left(#1\right)}

\newcommand{\bigbbbra}[1]{\big[#1\big]}
\newcommand{\bigbra}[1]{\big(#1\big)}

\newcommand{\bigset}[1]{\big\{#1\big\}}
\newcommand{\bra}[1]{(#1)}

\newcommand{\ddbra}[1]{[\![#1]\!]}

\newcommand{\op}[1]{\operatorname{#1}}
\newcommand{\ovl}[1]{\overline{#1}}

\newcommand{\set}[1]{\{ #1 \}}
\newcommand{\smat}[1]{\left(\begin{smallmatrix}#1\end{smallmatrix}\right)}
\newcommand{\sset}[1]{\left\{ #1 \right\}}
\newcommand{\un}[1]{\underline{#1}}
\newcommand{\wh}[1]{\widehat{#1}}

\begin{document}

\newtheorem{definition}{Definition}[section] %%编号以section为单位
\newtheorem{remark}[definition]{Remark}
\newtheorem{example}[definition]{Example}
\newtheorem{proposition}[definition]{Proposition}
\newtheorem{lemma}[definition]{Lemma}
\newtheorem{corollary}[definition]{Corollary}
\newtheorem{theorem}[definition]{Theorem}
\newtheorem{conjecture}[definition]{Conjecture}

\maketitle

\begin{abstract}
    Let $p\geq5$ be a prime number. Let $L$ be a finite unramified extension of $\Qp$ with ring of integers $\OL$ and residue field $\Fq$. Given two Serre weights for $\GL_3(\Fq)$, we prove that in most cases the extensions between them for $\GL_3(\OL)$ modulo the center coincide with their $\GL_3(\Fq)$-extensions.
\end{abstract}

\tableofcontents

\section{Introduction}

Let $p$ be a prime number. Let $L$ be a finite unramified extension of $\Qp$ with ring of integers $\OL$ and residue field $\Fq$. The mod $p$ Langlands correspondence is completely understood for $\GL_2(\Qp)$, but remains largely mysterious for $\GL_2(L)$ when $L\neq\Qp$, as well as for $\GL_3(L)$.

By contrast, mod $p$ representations of the compact group $\GL_2(\OL)$ are more tractable. We know that irreducible smooth mod $p$ representations of $\GL_2(\OL)$ necessarily factor through $\GL_2(\Fq)$ and are known as Serre weights for $\GL_2(\Fq)$. A natural question is therefore to understand extensions between two Serre weights. 

Firstly, the $\GL_2(\Fq)$-extensions between two Serre weights are completely studied in \cite[Cor.~5.6(i)]{BP12}. Secondly, it is known that in most cases, the extensions for $\GL_2(\OL)$ modulo the center between two Serre weights coincide with their $\GL_2(\Fq)$-extensions, which is proved in different cases by \cite[Prop.~2.21]{Hu10}, \cite[Lemma~2.10]{HW22} and \cite[Lemma~3.22]{HW24}. These two  results together provide a framework for the explicit description of many mod $p$ representations of $\GL_2(\OL)$, which play an important role in the recent progress of the mod $p$ Langlands correspondence for $\GL_2(L)$ (see \cite{BHHMS1} and \cite{HW22}).

In this paper, we study the analogous question for $\GL_3$. More precisely, we investigate the relationship between $\GL_3(\OL)$-extensions and $\GL_3(\Fq)$-extensions between two Serre weights for $\GL_3(\Fq)$. We let $Z_1\cong1+p\OL$ be the center of $\GL_3(\OL)$. Our main result is the following.

\begin{theorem}\label{K-ext Thm intro}
    Suppose that $p\geq5$. Let $\sigma$ and $\tau$ be two Serre weights for $\GL_3(\Fq)$. Apart from some exceptional cases (see Theorem \ref{K-ext Thm main} for a precise statement), we have an isomorphism
    \begin{equation*}
        \Ext^1_{\GL_3(\OL)/Z_1}(\tau,\sigma)\cong\Ext^1_{\GL_3(\Fq)}(\tau,\sigma).
    \end{equation*}
\end{theorem}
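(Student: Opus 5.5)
The strategy is the standard inflation–restriction argument used for $\GL_2$ in \cite[Prop.~2.21]{Hu10} and \cite[Lemma~2.10]{HW22}. Put $K=\GL_3(\OL)$, let $K_1=\Ker(K\to\GL_3(\Fq))$ be the first congruence subgroup, and observe that $Z_1\subseteq K_1$. Since $\sigma$ and $\tau$ are inflated from $\GL_3(\Fq)=K/K_1$, the five-term exact sequence of the Hochschild--Serre spectral sequence for $K_1/Z_1\triangleleft K/Z_1$ reads
\begin{equation*}
0\to\Ext^1_{\GL_3(\Fq)}(\tau,\sigma)\to\Ext^1_{K/Z_1}(\tau,\sigma)\to\Hom_{\GL_3(\Fq)}\bigl(\tau,\sigma\otimes H^1(K_1/Z_1,\FF_p)\bigr)\to\Ext^2_{\GL_3(\Fq)}(\tau,\sigma).
\end{equation*}
Here we use that $K_1$ acts trivially on $\sigma$, so $H^1(K_1/Z_1,\sigma)=\sigma\otimes H^1(K_1/Z_1,\FF_p)$. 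It therefore suffices to show that the third term vanishes, or more weakly that its image in $\Ext^2$ is injective, outside the exceptional cases.

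\textbf{Step 1: the $\GL_3(\Fq)$-module $H^1(K_1/Z_1,\FF_p)$.} Because $L/\Qp$ is unramified and $p\geq5$, the group $K_1/Z_1$ is uniform pro-$p$. Its Frattini quotient is
\begin{equation*}
(K_1/Z_1)/\Phi\cong\mathfrak{gl}_3(\Fq)/\Fq=\mathfrak{pgl}_3(\Fq),
\end{equation*}
with $\GL_3(\Fq)$ acting by conjugation. Hence
\begin{equation*}
H^1(K_1/Z_1,\FF_p)=\Hom(\mathfrak{pgl}_3(\Fq),\FF_p)\cong\mathfrak{sl}_3(\Fq)^\vee.
\end{equation*}
After extending scalars to $\FF$, this becomes $\bigoplus_{j\in\cJ}(\mathfrak{sl}_3)^{(j)}$, a direct sum of Frobenius twists of the adjoint representation. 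For $p\geq5$ the adjoint representation is irreducible, namely the Serre weight $F(1,0,-1)$, twisted by the $j$-th embedding.

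\textbf{Step 2: reduce to a tensor-product computation.} The third term becomes
\begin{equation*}
\bigoplus_{j\in\cJ}\Hom_{\GL_3(\Fq)}\bigl(\tau,\sigma\otimes F(1,0,-1)^{(j)}\bigr).
\end{equation*}
Write $\sigma=\bigotimes_i F(\lambda_i)^{(i)}$ by Steinberg's tensor product theorem. Then only the $j$-th factor changes, and $F(\lambda_j)\otimes F(1,0,-1)$ has to be analysed as an algebraic $\GL_3$-representation. For restricted $\lambda_j$ sufficiently deep in the alcove, this is a linkage and translation argument: the composition factors of $F(\lambda_j)\otimes F(1,0,-1)$ are the $F(\lambda_j+\mu)$ with $\mu$ ranging over the weights of the adjoint representation. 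A nonzero $\GL_3(\Fq)$-map from $\tau$ forces $\tau$ to be a constituent with $\tau_i=\sigma_i$ for all $i\neq j$, and $\tau_j$ a weight of the form $\lambda_j+\mu$, up to restriction to $\GL_3(\Fq)$. We then compare this list with the known $\Ext^1_{\GL_3(\Fq)}$ and $\Ext^2_{\GL_3(\Fq)}$ between Serre weights. For "adjacent" pairs where the Hom is nonzero, the strategy is to show that the transgression map to $\Ext^2$ is injective. Alternatively, we can exhibit directly that the corresponding class in $\Ext^1_{K/Z_1}$ is already inflated, by computing it on an explicit extension built from the $p$-adic lift of the adjoint action. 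This follows the $\GL_2$ method in \cite[Lemma~3.22]{HW24}.

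\textbf{Main obstacle.} The hard part is Step 2 when $\lambda_j$ is near an alcove wall, or when $\lambda_j+\mu$ is non-restricted so that it wraps around via $p$-adic carry into neighbouring embeddings. In these cases $F(\lambda_j)\otimes F(1,0,-1)$ is not semisimple and may have socle constituents that really do contribute. These are exactly the cases to be listed as exceptions in Theorem \ref{K-ext Thm main}. My approach there would be to compute $\soc_{\GL_3(\Fq)}\bigl(\sigma\otimes F(1,0,-1)^{(j)}\bigr)$ using Jantzen's results on tensor products with small modules, together with explicit weight-vector arguments. Whenever a potential $\tau$ appears in this socle, I would check injectivity of the differential $d_2$ by an explicit cocycle computation on the Iwahori subgroup.
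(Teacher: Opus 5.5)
Your reduction is the same as the paper's. It uses the exact sequence $0\to\Ext^1_{\GL_3(\Fq)}(\tau,\sigma)\to\Ext^1_{K/Z_1}(\tau,\sigma)\to\Hom_{\GL_3(\Fq)}(\tau,H^1(K_1/Z_1,\sigma))$, the identification $H^1(K_1/Z_1,\FF)\cong\oplus_jF(\alpha_{13})^{[j]}$ (Proposition~\ref{K-ext Prop H1}), and a determination of which $\tau$ lie in $\soc_{\GL_3(\Fq)}(\sigma\otimes F(\alpha_{13})^{[j]})$. The paper does the last step with tilting modules and the injective envelopes $Q_f(\lambda')$ (Corollary~\ref{K-ext Cor Tensor Socle}).

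That is only the set-up, and the step that actually proves the theorem is missing. The answer to the socle question is $\tau=F(\lambda+p^{j}\alpha)$ with $\alpha\in\Phi\cup\{0\}$. For these $\tau$ the third term is nonzero \emph{already for $\lambda$ deep inside an alcove}. So the content of the theorem is that $\Ext^1_{K/Z_1}(\tau,\sigma)\to\Hom_{\GL_3(\Fq)}(\tau,H^1(K_1/Z_1,\sigma))$ vanishes for exactly these generic ``adjacent'' pairs. You treat them as routine and put the difficulty at the alcove walls, which inverts the situation.

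None of your suggested mechanisms delivers this vanishing:
\begin{itemize}
\item Comparing with $\Ext^2_{\GL_3(\Fq)}(\tau,\sigma)$ cannot work, even if that group were fully known. Injectivity of $d_2$ is not a dimension statement: $d_2(\phi)$ is, up to sign, the pushforward along $\phi$ of the class of $1\to\mathfrak{pgl}_3(\Fq)\to K/K_2Z_1\to\GL_3(\Fq)\to1$. Deciding whether that pushforward vanishes is no easier than the original question.
\item ``Exhibiting that the class is inflated on an explicit extension'' has the logic backwards. You must show that \emph{every} non-split extension $0\to\sigma\to V\to\tau\to0$ of $K/Z_1$-representations is $K_1$-trivial.
\item ``An explicit cocycle computation on the Iwahori'' names a place to work, not an argument.
\end{itemize}
The exceptional cases in Theorem~\ref{K-ext Thm main} are also not where tensor products fail to be semisimple. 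For $f\geq2$ they are conditions on $\lambda_{j_0-1}$ (e.g.\ $\lambda_{j_0-1}|_{\SL_3}=(p-1,p-1)$). These come from the extension analysis itself, since $p\beta_{j_0-1}$ and $\beta_{j_0}$ give the same $H$-character, so $X^p_{\beta,j_0-1}v$ lands in $\sigma^{\lambda+\alpha_{j_0}+\beta_{j_0}}$.

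What is missing is the direct argument of Theorem~\ref{K-ext Thm Calpha}, with $\alpha\in\Phi^-$ reduced to $\Phi^+$ by duality. The paper never touches $\Ext^2$.
\begin{itemize}
\item Take an $H$-eigenvector $v\in V^{\lambda+\alpha_{j_0}}$ lifting $\tau^{I_1}$, and work with $X_{\beta,j}=\sum_{t\in\Fq^{\times}}t^{-p^j}u_\beta([t])$.
\item The key new input is to replace $H$-eigenspaces of $\sigma$, which are too large for upper-alcove weights, by algebraic weight spaces of $L(\lambda)$. The eigenspace containing the obstruction $X_{13,j_0}v$ is a single weight space $L(\lambda)_{\nu+\alpha_{13,j_0}}$ with $\nu=\lambda+\alpha_{j_0}-(p-1)\un{\alpha_{13}}$. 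Moreover $X_{13,j_0}$ maps $L(\lambda)_{\nu}\subseteq\sigma^{\lambda+\alpha_{j_0}}$ isomorphically onto it (Lemmas~\ref{K-ext Lem character-weight same} and~\ref{K-ext Lem Yj0 isomorphism}). This lets you correct $v$ to be $U(\OL)$-fixed.
\item Next, determine the $T_1$-action on $v$ and the vectors $w_{\beta,j}=X^p_{\beta,j-1}v\in\sigma$ through explicit matrix identities. If all $w_{31,j}=0$, then $v$ is $I_1$-fixed, so $V$ is a quotient of $\Ind_I^K\FF v$ and hence $K_1$-trivial.
\item Otherwise, map $\Ind_I^KW\to V$, where $W$ is the $I$-span of $v$. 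Compare the weights supporting the images of $y=\bigl(\prod_jX_{13,j}^{m_j}X_{12,j}^{n_j}\bigr)\smat{&&1\\&1&\\1&&}[1,v]$ and of $X_{13,j_0-1}y$, which brings in $w_{31,j_0}\neq0$. The first image avoids the middle coordinates of $\mu'$, while the second has a nonzero component in $L(\lambda)_{\mu'}$ by the nonvanishing Lemmas~\ref{K-ext Lem nonzero alpha13}--\ref{K-ext Lem nonzero alpha=0}. This is a contradiction.
\end{itemize}
This weight-space analysis is the heart of the proof, and nothing in your plan substitutes for it.
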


We expect that the exceptional cases in Theorem \ref{K-ext Thm intro} are essential, although we do not investigate this point in the present paper. 

\vspace{1em}

The proof of Theorem \ref{K-ext Thm intro} is technical and requires new ingredients compared with the $\GL_2$-case. To give a brief explanation, we denote by $H$ the set of diagonal matrices with entries in $\Fq\x$. %, where $[\bullet]$ denotes the Teichm\"uller lift. 
Each Serre weight can be decomposed as a direct sum of $H$-eigenspaces. In the $\GL_2$-case, we show that certain elements in a Serre weight must be zero by proving that they have $H$-eigencharacters not appearing in the Serre weight. However, in the $\GL_3$-case, for the upper alcove Serre weights, they contain too many $H$-eigencharacters so that a similar argument fails. To overcome this problem, we regard a Serre weight as an algebraic representation of $\GL_3$ and study the weight spaces instead of the $H$-eigenspaces, which give better control of where certain elements belong to. We refer to \S\S\ref{K-ext sec argument}-\ref{K-ext sec conclusion} for a more detailed outline of the argument.

Finally, we remark that in the sufficiently generic case in the sense of \cite{LLHLM20}, the $\GL_3(\Fq)$-extensions between two Serre weights are studied in \cite[Lemma~4.2.6]{LLHLM20}. Together with Theorem \ref{K-ext Thm intro}, this provides a fairly complete understanding of the $\GL_3(\OL)$-extensions between two Serre weights for $\GL_3(\Fq)$. We expect that Theorem \ref{K-ext Thm intro} will be useful in the mod $p$ Langlands correspondence for $\GL_3(L)$ in the future.

\subsection*{Organization of the paper}

In \S\ref{K-ext Sec Tensor}, we describe the structure of the tensor products of two Serre weights. In \S\S\ref{K-ext sec operator Fp}-\ref{K-ext sec operator Fq}, we study certain operators acting on Serre weights and obtain some technical lemmas involving the weight spaces. In \S\S\ref{K-ext sec argument}-\ref{K-ext sec conclusion}, we combine the results of the previous sections and complete the proof of Theorem \ref{K-ext Thm intro}.

\subsection*{Acknowledgements}

We thank Florian Herzig, Yongquan Hu and Zicheng Qian for helpful discussions. 

The author was supported by the University of Toronto. We thank the Morningside Center of Mathematics for their hospitality and providing excellent working conditions where part
of this work was carried out.

\subsection*{Notation}

Let $p\geq5$ be a prime number. Let $L$ be an unramified extension of $\Qp$ of degree $f$ with ring of integers $\OL$ and residue field $\Fq$ (hence $q=p^f$). 

Let $\FF$ be a large enough finite extension of $\Fp$. Fix an embedding $\sigma_0:\Fq\into\FF$ and let $\sigma_j\eqdef\sigma_0\circ\varphi^j$ for $j\in\ZZ$, where $\varphi:x\mapsto x^p$ is the arithmetic Frobenius on $\Fq$. We identify $\cJ\eqdef\Hom(\Fq,\FF)$ with $\set{0,1,\ldots,f-1}$, which is also identified with $\ZZ/f\ZZ$ so that the addition and subtraction in $\cJ$ are modulo $f$.

For $a\in\OL$, we write $\ovl{a}\in\Fq$ for the reduction modulo $p$, and we also view it as an element of $\FF$ via $\sigma_0$. For $b\in\Fq$, we write $[b]\in\OL=W(\Fq)$ for its Teichm\"uller lift. For $a,b,c\in\OL$, we write $\op{diag}(a,b,c)$ for the diagonal matrix $\smat{a&&\\&b&\\&&c}$.

All representations are defined over $\FF$, unless stated otherwise. If $V$ a finite-dimensional representation of a group, we let $\JH(V)$ denote the set of Jordan--H\"older factors of $V$.

For $P$ a statement, we let $\delta_P\eqdef1$ if $P$ is true and $\delta_{P}\eqdef0$ otherwise. 

\section{Tensor products of Serre weights}\label{K-ext Sec Tensor}

Let $G\eqdef\GL_{3/\Fp}$ and $T\subseteq G$ be the maximal split torus consisting of diagonal matrices. Let $X(T)$ denote the group of characters of $T$, which we identify with $\ZZ^3$ in the usual way. We often say a weight for an element of $X(T)$. For $\lambda\in X(T)$, we write $\lambda=(\lambda_1,\lambda_2,\lambda_3)\in\ZZ^3$. We write $\Phi$ (resp.\;$\Phi^{\vee}$) for the set of roots (resp.\;coroots) of $(G,T)$. %For simplicity, we write $G(\Fp)\eqdef\GL_3(\Fp)$ and so on.

Let $B\subseteq G$ be the Borel subgroup consisting of upper-triangular matrices and $U\subseteq B$ be the maximal unipotent subgroup. Let $\Phi^+\subseteq\Phi$ be the set of positive roots with respect to $(B,T)$. Concretely, for $1\leq i\neq j\leq 3$ we write $\alpha_{ij}\eqdef\varepsilon_i-\varepsilon_j\in X(T)$ with $\varepsilon_1\eqdef(1,0,0)$, $\varepsilon_2\eqdef(0,1,0)$, and $\varepsilon_3\eqdef(0,0,1)$. Then we have $\Phi^+=\set{\alpha_{12},\alpha_{23},\alpha_{13}}$. The set $\Phi^+$ induces a partial order on $X(T)$: for $\lambda,\mu\in X(T)$ we say that $\lambda\leq\mu$ if $\mu-\lambda\in\sum_{\alpha\in\Phi^+}\NNN\alpha$. We also write $\Phi^-\eqdef-\Phi^+$ so that $\Phi=\Phi^+\sqcup\Phi^-$. 

We let $X(T)_+\eqdef\set{\lambda\in X(T):\lambda_1\geq\lambda_2\geq\lambda_3}$ be the set of dominant weights. For $\lambda\in X(T)_+$, we have the following finite-dimensional $G$-modules (\cite{Jan03}):
\begin{equation*}
\begin{aligned}
    &L(\lambda):~\text{simple module of highest weight $\lambda$};\\
    &V(\lambda):~\text{Weyl module of highest weight $\lambda$};\\
    &T(\lambda):~\text{indecomposable tilting module of highest weight $\lambda$}.
\end{aligned}
\end{equation*}
For $\lambda\in X(T)\setminus X(T)_+$, we set $L(\lambda)=V(\lambda)=T(\lambda)\eqdef0$. For $V$ a finite-dimensional $G$-module, we denote by $[V]$ the corresponding element in the Grothendieck group of $G$-modules.

For $n\geq0$, we define the set $X_n(T)$ of $p^n$-restricted weights %and the set $X_0(T)$ of central weights as
\begin{equation*}
\begin{aligned}
    X_n(T)&\eqdef\set{\lambda\in X(T):0\leq\lambda_1-\lambda_2,\lambda_2-\lambda_3\leq p^n-1}.%;\\
    %X_0(T)&\eqdef\set{\lambda\in X(T):\lambda_1=\lambda_2=\lambda_3}.
\end{aligned}
\end{equation*}
%We let $X_1(T)\eqdef\set{\lambda\in X(T):0\leq\lambda_1-\lambda_2,\lambda_2-\lambda_3\leq p-1}$ be the set of $p$-restricted weights and let $X_0(T)\eqdef\set{\lambda\in X(T):\lambda_1=\lambda_2=\lambda_3}$ be the set of central weights. %For $\lambda\in X_1(T)$, we write $F(\lambda)\eqdef L(\lambda)|_{G(\Fp)}$ the corresponding Serre weight of $G(\Fp)$. 
For $\lambda\in X_f(T)$, we write $\lambda=\sum_{j=0}^{f-1}p^j\lambda_j$ with $\lambda_j\in X_1(T)$, with an abuse of notation. By \cite[II.3.16]{Jan03} we have $L(\lambda)\cong\otimes_{j=0}^{f-1}L(\lambda_j)^{[j]}$, where $[j]$ denotes the $j$-th Frobenius twist. By \cite[Cor.~3.17]{Her09}, when $\lambda$ runs through $X_f(T)$, the restrictions $F(\lambda)\eqdef L(\lambda)|_{G(\Fq)}$ exhaust all the irreducible representations of $G(\Fq)$. Moreover, for two weights $\lambda,\lambda'\in X_f(T)$, we have $F(\lambda)\cong F(\lambda')$ if and only if $\lambda-\lambda'\in(q-1)X_0(T)$.

For $\lambda\in X(T)$, we write $\lambda|_{\SL_3}\eqdef(\lambda_1-\lambda_2,\lambda_2-\lambda_3)\in\ZZ^2$. 

\begin{definition}
    We define the following disjoint subsets of $X(T)_{+}$:
    \begin{equation*}
    \begin{aligned}
        C(1)&\eqdef\set{\lambda\in X_1(T):\lambda_1-\lambda_3\leq p-3};\\
        C(1|2)&\eqdef\set{\lambda\in X_1(T):\lambda_1-\lambda_3=p-2};\\
        C(2)&\eqdef\set{\lambda\in X_1(T):0\leq\lambda_1-\lambda_2,\lambda_2-\lambda_3\leq p-2,~\lambda_1-\lambda_3\geq p-1};\\
        C(2|3)&\eqdef\set{\lambda\in X_1(T):\lambda_1-\lambda_2=p-1~\text{or}~\lambda_2-\lambda_3=p-1}.
    \end{aligned}
    \end{equation*}
    \begin{equation*}
    \begin{aligned}
        C(3)&\eqdef\set{\lambda\in X(T)_{+}:\lambda_1-\lambda_2\in\set{p,p+1},\lambda_1-\lambda_3\leq2p-2};\\
        C(3')&\eqdef\set{\lambda\in X(T)_{+}:\lambda_2-\lambda_3\in\set{p,p+1},\lambda_1-\lambda_3\leq2p-2};\\
        C(p)&\eqdef\set{\lambda\in X(T)_{+}:\lambda|_{\SL_3}\in\set{(p,p-1),(p-1,p),(p,p)}}.
    \end{aligned}
    \end{equation*}
    In particular, we have $X_1(T)=C(1)\sqcup C(1|2)\sqcup C(2)\sqcup C(2|3)$. 
\end{definition}

%\red{For convenience, we work in the representation ring $\cR=\Rep_{\FF}(\GL_3)$, which is the quotient of the free abelian group on the set $[L(\lambda)]$, as $\lambda$ varies over $X(T)_+$, by the subgroup generated by all expressions of the form $[M]-[M']-[M'']$ such that $0\to M'\to M\to M''\to0$ is a short exact sequence of finite-dimensional $G$-modules.}

\begin{definition}
    We define the following reflections:
\begin{equation}\label{K-ext Eq reflections}
\begin{aligned}
    s_{2}:X(T)\to X(T),\quad(a,b,c)\mapsto(c+p-2,b,a-p+2);\\
    s_{3}:X(T)\to X(T),\quad(a,b,c)\mapsto(b+p-1,a-p+1,c);\\
    s_{3'}:X(T)\to X(T),\quad(a,b,c)\mapsto(a,c+p-1,b-p+1).\\
\end{aligned}
\end{equation}
Moreover, for $\bullet\in\set{2,3,3'}$ and $\lambda\in C(\bullet)$, we write $s(\lambda)\eqdef s_{\bullet}(\lambda)\in X_1(T)$.
\end{definition}

By \cite[Prop.~3.18]{Her09} we have $V(\lambda)=L(\lambda)$ if $\lambda\in C(1)\cup C(1|2)\cup C(2|3)$ and $[L(\lambda)]=[V(\lambda)]-[V(s_2(\lambda))]$ in the Grothendieck group of $G$-modules if $\lambda\in C(2)$.

\begin{proposition}\label{K-ext Prop Tilting Structure}
    %For $\bullet\in\set{2,3,3'}$ and $\mu\in C(\bullet)$, we write $s(\mu)\eqdef s_{\bullet}(\mu)$. 
    Let $\lambda\in X_1(T)\cup C(3)\cup C(3')\cup C(p)$. 
\begin{enumerate}
    \item 
    We have $\soc_{G}T(\lambda)\cong L(\lambda')$ with
    \begin{equation}\label{K-ext Eq lambda'}
        \lambda'=
    \begin{cases}
        \lambda&\text{if}~\lambda\in C(1)\cup C(1|2)\cup C(2|3)\\
        s(\lambda)&\text{if}~\lambda\in C(2)\cup C(3)\cup C(3')\\
        \lambda-\alpha_{13}&\text{if}~\lambda|_{\SL_3}=(p-1,p)~\text{or}~(p,p-1)\\
        \lambda-2\alpha_{13}&\text{if}~\lambda|_{\SL_3}=(p,p).
    \end{cases}
    \end{equation}
    In particular, $\lambda'\in X_1(T)$ in all the above cases. 
    \item 
    In the Grothendieck group of $G$-modules we have
    \begin{equation*}
        [T(\lambda)]=
    \begin{cases}
        [V(\lambda)]&\text{if}~\lambda\in C(1)\cup C(1|2)\cup C(2|3)\\
        [V(\lambda)]+[V\bra{s(\lambda)}]&\text{if}~\lambda\in C(2)\cup C(3)\cup C(3')\\
        [V(\lambda)]+[V(\lambda-\alpha_{12})]+[V(\lambda-\alpha_{13})]&\text{if}~\lambda|_{\SL_3}=(p,p-1)\\
        [V(\lambda)]+[V(\lambda-\alpha_{23})]+[V(\lambda-\alpha_{13})]&\text{if}~\lambda|_{\SL_3}=(p-1,p)\\
        \ssum_{\alpha\in\Phi}[V(\lambda-\alpha_{13}+\alpha)]&\text{if}~\lambda|_{\SL_3}=(p,p-1).
    \end{cases}
    \end{equation*}
\end{enumerate}
\end{proposition}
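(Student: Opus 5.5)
Throughout, $\lambda$ is in the range allowed by the statement. The plan is the standard tilting theory for $\GL_3$ in the region $\lambda_1-\lambda_3\leq 2p$, using the linkage principle and Weyl filtrations. Recall the basic facts from \cite{Jan03}. $T(\lambda)$ has a Weyl filtration with $V(\lambda)$ occurring once, and all other factors are $V(\mu)$ with $\mu<\lambda$ and $\mu$ in the same $W_p$-dot orbit. By self-duality, $\soc T(\lambda)\cong\op{hd}T(\lambda)$. I would determine $[T(\lambda)]$ case by case, by translation from a well-understood tilting module, and then read off the socle.

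\textbf{Step 1 (small cases).} For $\lambda\in C(1)\cup C(1|2)\cup C(2|3)$ we have $V(\lambda)=L(\lambda)$ by \cite[Prop.~3.18]{Her09]}. Hence $V(\lambda)$ is simple and self-dual, so it is tilting, and $T(\lambda)=V(\lambda)=L(\lambda)$. This gives both parts in this case.

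\textbf{Step 2 (the $C(2)\cup C(3)\cup C(3')$ cases).} Here $\lambda$ lies in an alcove, or on a wall of the upper alcove, whose only smaller linked dominant weight in the relevant range is $s(\lambda)$, by direct inspection of the reflections \eqref{K-ext Eq reflections}. So $[T(\lambda)]=[V(\lambda)]+m[V(s(\lambda))]$ with $m\in\{0,1\}$. The cleanest way to get $m=1$ is to write $T(\lambda)$ as a translation functor, from the lowest alcove, of the Steinberg-type tilting module $T((p-1)\rho+w_0\mu)$ or $T(\mu)$. Alternatively, realize it as a direct summand of $L(\nu)\otimes L(\nu')$ with $\nu,\nu'\in C(1)$ and compute characters. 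Translation out of the bottom alcove onto or across a single wall yields exactly two Weyl factors $V(\lambda)$ and $V(s(\lambda))$. Then $V(s(\lambda))$ sits in the bottom of the Weyl filtration. Since $s(\lambda)\in X_1(T)$ is the unique maximal weight below $\lambda$ in the linkage class, and $T(\lambda)$ is indecomposable, the socle is $L(s(\lambda))$, which is the head of $V(s(\lambda))$ realized as a submodule. Equivalently, $\soc V(s(\lambda))$ must equal $L(s(\lambda))$ here by \cite[Prop.~3.18]{Her09]}, since $s(\lambda)$ lies in the lower region.

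\textbf{Step 3 (the $C(p)$ cases).} For $\lambda|_{\SL_3}=(p,p)$ we write $\lambda=\lambda-2\alpha_{13}+\ldots$. Concretely, $\lambda-\alpha_{13}$ has $\SL_3$-part $(p-1,p-1)$, so it is a twist of the Steinberg weight $(p-1)\rho$. I would use the isomorphism $T((p-1)\rho+\nu)\cong \op{St}\otimes L(\nu)$ for small $\nu$ (\cite[II.E]{Jan03}), valid when $\nu$ is $p$-small, together with $\op{St}$ being tilting. Taking $\nu$ to be $\alpha_{13}$ (the adjoint highest weight $\rho$-shift), respectively $\varepsilon_1$ or $-\varepsilon_3$ type weights, gives $\op{St}\otimes L(\nu)$ with Weyl factors $V((p-1)\rho+\mu)$ for $\mu$ ranging over the weights of $L(\nu)$. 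For the adjoint case $\nu=\alpha_{13}$ (dimension $8$) this gives the sum over $\Phi$ together with the zero-weight terms. These terms cancel or collapse since $\dim L(\alpha_{13})=8$ only for $p\neq3$, which holds because $p\geq5$, leaving $\sum_{\alpha\in\Phi}[V(\lambda-\alpha_{13}+\alpha)]$ after removing a direct summand $\op{St}^{\oplus 2}$. For the $(p,p-1)$ and $(p-1,p)$ cases, the same construction gives three Weyl factors.

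The socle is then computed via $\Hom(L(\mu),\op{St}\otimes L(\nu))=\Hom(L(\mu)\otimes L(\nu)^*,\op{St})$. Since $\op{St}$ is simple and injective for $G_1T$, this Hom is concentrated at the lowest linked weight, which gives $\lambda-\alpha_{13}$, respectively $\lambda-2\alpha_{13}$.

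\textbf{Main obstacle.} The main obstacle is Step 3, namely identifying exactly which summand of $\op{St}\otimes L(\nu)$ is $T(\lambda)$ and verifying indecomposability. I would try the $G_1T$-injectivity argument first: the socle of $\op{St}\otimes L(\nu)$ as a $G_1$-module controls the decomposition.
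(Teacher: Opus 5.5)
The paper does not prove this itself. It imports the $\SL_3$ statement from \cite[Theorem~B]{BDM15} and transfers it to $\GL_3$ via $T(\lambda)|_{\SL_3}\cong T(\lambda|_{\SL_3})$ and the analogous facts for $V$ and $L$. Reproving it with translation functors and Steinberg tensor products is a reasonable alternative, and your Step~1 is fine. However, the socle arguments in Steps~2 and~3 do not work as written.

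\textbf{Step~2.} The Weyl filtration is upside down: $V(\lambda)$ is the submodule of $T(\lambda)$ and $V(s(\lambda))$ is the quotient. A filtration with $V(s(\lambda))$ at the bottom would split, since $\Ext^1_G(V(\lambda),V(s(\lambda)))=0$ for $s(\lambda)<\lambda$.

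Your fallback claim $\soc V(s(\lambda))=L(s(\lambda))$ ``since $s(\lambda)$ lies in the lower region'' is false on $C(3)\cup C(3')$. For example, $\lambda|_{\SL_3}=(p,0)$ gives $s_3(\lambda)|_{\SL_3}=(p-2,1)\in C(2)$. So $V(s_3(\lambda))$ is not simple, and its socle is $L(s_2s_3(\lambda))$, with $\SL_3$-part $(p-3,0)$.

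The same example shows that $s(\lambda)$ is not the only smaller dominant weight linked to $\lambda$. So linkage alone gives neither the shape $[V(\lambda)]+m[V(s(\lambda))]$ with $m\le 1$ nor the socle. ``Unique maximal linked weight plus indecomposability'' does not pin down the socle.

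What is missing is an actual identification of $T(\lambda)$:
\begin{itemize}
\item Take $\mu$ on the relevant wall: $\mu\in C(1|2)$ for $\lambda\in C(2)$, and $\mu\in C(2|3)$ on the wall $\mu_1-\mu_2=p-1$ (resp.\ $\mu_2-\mu_3=p-1$) for $\lambda\in C(3)$ (resp.\ $C(3')$) with $\lambda_1-\lambda_3\le 2p-3$. Then $L(\mu)=T(\mu)$, and its translate $T_\mu^\lambda L(\mu)$ out of the wall is tilting with Weyl factors $V(\lambda)$ and $V(s(\lambda))$.
\item By adjunction, $\Hom_G(T_\mu^\lambda L(\mu),L(\nu))\cong\Hom_G(L(\mu),T_\lambda^\mu L(\nu))$. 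This is nonzero, and then one-dimensional, only for $\nu=s(\lambda)$, the weight whose alcove has $\mu$ in its upper closure.
\item Hence the head is simple, the module is $T(\lambda)$, and $\soc T(\lambda)\cong L(s(\lambda))$ by contravariant self-duality.
\end{itemize}
The weights of $C(3)\cup C(3')$ with $\lambda_1-\lambda_3=2p-2$ lie on a further wall and need a separate translation onto it.

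\textbf{Step~3.} The claimed isomorphism $T((p-1)\rho+\nu)\cong\mathrm{St}\otimes L(\nu)$ for small $\nu$ is not a result of \cite[II.E]{Jan03}. It is false in general, as your own computation for $\nu=\alpha_{13}$ shows (two extra copies of $\mathrm{St}$). One only knows that $\mathrm{St}\otimes L(\nu)$ is tilting and contains $T((p-1)\rho+\nu)$ exactly once. Splitting off the Steinberg block is easy. Proving that what remains is indecomposable with the stated socle is the whole content of the $C(p)$ case, and you leave it open.

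The heuristic that the Hom ``is concentrated at the lowest linked weight'' is not just unproved but misleading. For $\lambda|_{\SL_3}=(p,p)$ the lowest dominant weight linked to $\lambda$ is the character $\lambda-p\alpha_{13}$. Moreover $L(\lambda-p\alpha_{13})$ genuinely occurs in $T(\lambda)$, as the socle of the Weyl factor $V(\lambda-2\alpha_{13})$. You must explain why it is not in $\soc T(\lambda)$.

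Your closing idea does this if you carry it out:
\begin{itemize}
\item Since $\mathrm{St}$ is $G_1$-injective, $\dim\Hom_{G_1}(L(\eta),\mathrm{St}\otimes L(\nu))=\dim\Hom_{G_1}(L(\eta)\otimes L(\nu)^*,\mathrm{St})$. This equals the $G_1$-multiplicity of $\mathrm{St}$ in $L(\eta)\otimes L(\nu)^*$.
\item A weight count, using $p\geq5$, shows that away from the Steinberg block the only restricted $\eta$ contributing is $\lambda-\alpha_{13}$ (resp.\ $\lambda-2\alpha_{13}$), and it contributes once.
\item So the non-Steinberg summand has simple $G_1$-socle. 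Hence it is indecomposable, equals $T(\lambda)$, and has the claimed $G$-socle.
\end{itemize}
Alternatively, use Donkin's $T(2(p-1)\rho+w_0\mu)|_{G_1T}\cong\widehat{Q}_1(\mu)$, valid for $p\geq 2h-2$.
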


\begin{proof}
    This proposition is taken from \cite[Theorem~B]{BDM15}. Note in that paper they work with $\SL_3$. Using \cite[(4.3)]{LLHLM20} together with the isomorphisms $T(\lambda)|_{\SL_3}\cong T(\lambda|_{\SL_3})$ (see \cite[Lemma~1.4(i)]{Don93}), $V(\lambda)|_{\SL_3}\cong V(\lambda|_{\SL_3})$ and $L(\lambda)|_{\SL_3}\cong L(\lambda|_{\SL_3})$, we have completely similar results for $\GL_3$.
\end{proof}

\begin{proposition}\label{K-ext Prop Tensor Socle}
    Let $j_0\in\cJ$. Let $\lambda_{j_0}\in X_1(T)\cup C(3)\cup C(3')\cup C(p)$ and $\lambda'_{j_0}\in X_1(T)$ be as in \eqref{K-ext Eq lambda'}. Let $\lambda_j\in X_1(T)$ for $j\neq j_0$. We write $\lambda=\sum_{j=0}^{f-1}p^j\lambda_j\in X(T)$ and $\lambda'=p^{j_0}\lambda'_{j_0}+\sum_{j\neq j_0}p^j\lambda_j\in X_f(T)$. We also write $T^{j_0}(\lambda)\eqdef T(\lambda_{j_0})^{[j_0]}\otimes\!\bigbra{\!\otimes_{j\neq j_0}L(\lambda_{j})^{[j]}}$. Then we have $\soc_{G(\Fq)}\!\bigbra{T^{j_0}(\lambda)|_{G(\Fq)}}\cong F(\lambda')$.
    %\begin{equation*}
        %\soc_{G(\Fq)}\bbra{T(\lambda_{j_0})^{[j_0]}\otimes\!\bigbra{\!\otimes_{j\neq j_0}L(\lambda_{j})^{[j]}}}\cong F(\lambda').
    %\end{equation*}
\end{proposition}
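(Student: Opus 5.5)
Our plan is to reduce the statement to the $G$-socle computation of Proposition~\ref{K-ext Prop Tilting Structure}(1), using the general principle that a restricted $G$-module whose $G$-socle is irreducible keeps an irreducible socle after restriction to $G(\Fq)$. Write $M\eqdef T^{j_0}(\lambda)$. First, $\soc_G M\cong L(\lambda'_{j_0})^{[j_0]}\otimes\bigl(\otimes_{j\neq j_0}L(\lambda_j)^{[j]}\bigr)\cong L(\lambda')$. The cleanest way to see this is via the Frobenius kernels. Set $G_1^{(j)}$ for the Frobenius kernel acting on the $j$-th twisted factor; equivalently, use Steinberg's tensor product theorem together with the fact that $\otimes_{j\neq j_0}L(\lambda_j)^{[j]}$ is simple and has no higher extensions interfering, since it is concentrated at different Frobenius levels. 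Concretely, we would argue as follows. Since $\lambda'_{j_0}\in X_1(T)$ and $\lambda_j\in X_1(T)$, the module $L(\lambda')$ is simple. Any simple submodule $L(\mu)\subseteq M$, with $\mu=\sum p^j\mu_j$, gives a nonzero element of
\[
\Hom_G\Bigl(L(\mu),\,M\Bigr)\cong\Hom_G\Bigl(L(\mu_{j_0})^{[j_0]},\,T(\lambda_{j_0})^{[j_0]}\otimes\bigotimes_{j\neq j_0}\bigl(L(\mu_j)^{*}\otimes L(\lambda_j)\bigr)^{[j]}\Bigr).
\]
From this we would deduce $\mu_j=\lambda_j$ for $j\neq j_0$ and $\mu_{j_0}=\lambda'_{j_0}$. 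For $f=1$ this is exactly Proposition~\ref{K-ext Prop Tilting Structure}(1).

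The main work is the passage from $G$ to $G(\Fq)$. We would follow the strategy of \cite[\S3]{Her09} and Humphreys: for $\nu$ a $(q-1)$-restricted or suitably small weight, $\Hom_{G(\Fq)}(F(\nu),M)$ can be computed by $G$-homomorphisms after twisting. The precise input is the following. Suppose the weights of $M$ are sufficiently small relative to $q$: the highest weight $\lambda$ satisfies $\lambda_{j_0}|_{\SL_3}$ at most $(p+1,p+1)$, with the other $\lambda_j$ restricted. Then for any simple $G(\Fq)$-submodule $F(\nu)\subseteq M|_{G(\Fq)}$, we would show it comes from a $G$-submodule. One approach is to consider $M\otimes F(\nu)^{*}$ and its $G(\Fq)$-invariants, and to compare them with the $G$-invariants of $M\otimes L(\nu)^*\otimes\bigl(\text{Frobenius twists}\bigr)$ via the standard filtration argument \cite[II.10]{Jan03} for generalized $q$-restrictedness. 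Alternatively, we could argue through $H$-eigenvectors fixed by $U(\Fq)$. A socle vector of $M|_{G(\Fq)}$ can be chosen to be $U(\Fq)$-invariant with some $H$-eigencharacter. We would then show that $M^{U(\Fq)}$ equals $M^{U}$ (the $G$-highest-weight vectors) up to the $\lambda'$-line, using that all weights of $M$ are small enough that $U(\Fq)$-invariance forces $U$-invariance. This weight-smallness check is where $p\geq5$ and the bounds $\lambda_1-\lambda_3\leq 2p$ enter.

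We expect the hard step to be exactly this comparison $M^{U(\Fq)}$ versus $M^U$, i.e.\ ruling out extra $G(\Fq)$-socle constituents. These could arise because weights of $T(\lambda_{j_0})$ exceed $X_1(T)$, so that distinct weights become congruent modulo $(q-1)$ or fold under $p^{j_0}$-twists into neighbouring factors. We would handle it with a case analysis over the possibilities in \eqref{K-ext Eq lambda'}, combining the explicit Weyl filtration from Proposition~\ref{K-ext Prop Tilting Structure}(2) with the fact that $T(\lambda_{j_0})$ is self-dual and has simple socle and head. Self-duality would let us identify the candidate $G(\Fq)$-socle with the dual of the head. We would then check for each Weyl constituent that no other highest weight yields a $U(\Fq)$-fixed $H$-eigenvector lying outside the $G$-socle $L(\lambda')$.
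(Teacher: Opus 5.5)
Your proposal has a genuine gap at exactly the step you flag as hard, and the mechanisms you suggest for it do not work as stated.

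The ``general principle'' (an irreducible $G$-socle stays irreducible on $G(\Fq)$) is available only when all composition factors are $q$-restricted. For $\lambda_{j_0}\in X_1(T)$ this is indeed what the paper uses, via \cite[Cor.~7.5]{CPSvdK77}. But for $\lambda_{j_0}\in C(3)\cup C(3')\cup C(p)$ the module $T(\lambda_{j_0})$ has constituents $L(\mu_0)\otimes L(\mu_1)^{[1]}$ with $\mu_1\neq0$. So $M$ need not have $q$-restricted composition factors (e.g.\ when $j_0=f-1$), and in that generality the principle is false. For instance, $L(\varepsilon_1)\otimes L(\varepsilon_1)^{[f]}\cong L((q+1)\varepsilon_1)$ is $G$-simple, while its restriction to $G(\Fq)$ is $L(\varepsilon_1)^{\otimes2}\cong L(2\varepsilon_1)\oplus L(\varepsilon_1+\varepsilon_2)$. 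Hence ``every simple $F(\nu)\subseteq M|_{G(\Fq)}$ comes from a $G$-submodule'' is what has to be proved, not a tool you can invoke.

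The $U(\Fq)$-invariant route also fails as formulated, for two reasons.
\begin{enumerate}
\item The weights of $M$ are not small relative to $q$. Already for $f=1$, $\lambda_{j_0}$ and $\lambda_{j_0}-(p-1)\alpha_{13}$ are both weights of $T(\lambda_{j_0})$ with the same $H$-eigencharacter. So $U(\Fq)$-fixed $H$-eigenvectors are not weight vectors, and no weight argument forces them to be $U$-fixed.
\item More decisively, $M^U$ is not the $\lambda'$-line. The vector spanning the weight-$\lambda$ space of $M$ is $U$-fixed and an $H$-eigenvector. But $\lambda>\lambda'$ whenever $T(\lambda_{j_0})\neq L(\lambda_{j_0})$, so this vector does not lie in $L(\lambda')$. (For $f=1$, $\dim M^U$ equals the number of Weyl factors of $T(\lambda_{j_0})$.)
\end{enumerate}
So your final check, that no $U(\Fq)$-fixed $H$-eigenvector lies outside $L(\lambda')$, is simply false. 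You would have to show instead that such vectors never generate simple $G(\Fq)$-submodules, and the proposal gives no way to do this.

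Your $G$-socle step for $f\geq2$ is also not justified as written. Your Hom computation does not by itself give $\mu_j=\lambda_j$ for $j\neq j_0$. In general, and in particular for $\lambda_{j_0}\notin X_1(T)$, knowing $\soc_G T(\lambda_{j_0})=L(\lambda'_{j_0})$ does not control the socle after tensoring with modules at other Frobenius levels. What you need is that the $G_1$-socle of $T(\lambda_{j_0})$ is $L(\lambda'_{j_0})$, i.e.\ an embedding $T(\lambda_{j_0})\hookrightarrow Q_1(\lambda'_{j_0})$. The paper deduces this from $\Ext^1_G(L(\mu),Q_1(\lambda'_{j_0}))=0$ for $L(\mu)\in\JH(T(\lambda_{j_0}))$ \cite[Prop.~4.2]{LLHM}.

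This embedding is the missing idea for the whole argument. Tensoring yields $M\hookrightarrow Q_1(\lambda'_{j_0})^{[j_0]}\otimes\bigotimes_{j\neq j_0}Q_1(\lambda_j)^{[j]}\cong Q_f(\lambda')$, so it suffices to bound $\soc_{G(\Fq)}Q_f(\lambda')$. In the non-restricted case one rotates to $j_0=f-1$ and uses Pillen's formula $[Q_f(\lambda')|_{G(\Fq)}:\op{inj}_{G(\Fq)}F(\mu)]=\sum_{\nu}[L(\mu)\otimes L(\nu):L(p^f\nu+\lambda')]_G$. Write $\mu=\lambda'+(p^f-1)\nu+c_1\alpha_{12}+c_2\alpha_{23}$. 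The condition $\mu\in X_f(T)$, together with $\langle\lambda'_{j_0},\alpha_{12}^\vee\rangle\geq1$, $\langle\lambda'_{j_0},\alpha_{23}^\vee\rangle\geq1$ and $\langle\lambda'_{j_0},\alpha_{13}^\vee\rangle\geq p-1$, forces $\nu\in X_0(T)$. Hence $Q_f(\lambda')|_{G(\Fq)}\cong\op{inj}_{G(\Fq)}F(\lambda')$, whose socle is $F(\lambda')$. No analysis of $U(\Fq)$-fixed vectors is needed.
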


\begin{proof}
    For $n\geq1$, we let $G_n$ denote the $n$-th Frobenius kernel of $G$. For $\mu\in X_n(T)$, we write $\wh{Q}_n(\mu)$ for the injective envelope of the irreducible representation $L(\mu)|_{G_nT}$ in the category of $G_nT$-modules. By \cite[II.11.11]{Jan03} and since $p\geq5$, $\wh{Q}_n(\mu)$ has a unique $G$-structure and we denote by $Q_n(\mu)$ in what follows.
    
    %First we prove that there is an injective $G$-equivariant embedding $T^{j_0}(\lambda)\into Q_f(\lambda')$. To do this, it suffices to show that $\Ext^1_{G}\bra{L(\mu),Q_f(\lambda')}=0$ for all $L(\mu)\in\JH(T^{j_0}(\lambda))$. We write $\mu=\mu_0+p^f\mu_1$ with $\mu_0\in X_f(T)$ and $\mu_1\in X(T)$. Then by \cite[II.3.16]{Jan03} we have $L(\mu)=L(\mu_0)\otimes L(\mu_1)^{[f]}$. Since $Q_f(\lambda')|_{G_f}$ is an injective $G_f$-module with $G_f$-socle isomorphic to $L(\lambda')|_{G_f}$, the Lyndon--Hochschild--Serre spectral sequence (\cite[I.6.6(1),~I.6.5(2)]{Jan03}) shows that
    %\begin{equation*}
    %\begin{aligned}
        %\Ext^1_{G}\bra{L(\mu),Q_f(\lambda')}&\cong\Ext^1_{G/G_f}\bigbra{L(\mu_1)^{[f]},\Hom_{G_f}\bra{L(\mu_0),Q_f(\lambda')}}\\
        %&\cong\Ext^1_{G/G_f}\bigbra{L(\mu_1)^{[f]},\Hom_{G_f}\bra{L(\mu_0),L(\lambda')}}\\
        %&\cong
    %\begin{cases}
        %\Ext^1_{G/G_f}\bigbra{L(\mu_1)^{[f]},L\bra{\lambda'-\mu_0}}&\text{if}~\lambda'-\mu_0\in p^fX_0(T)\\
        %0&\text{otherwise}
    %\end{cases}\\
        %&\cong
    %\begin{cases}
        %\Ext^1_{G}\bigbra{L(\mu_1),L\bra{(\lambda'-\mu_0)/p^f}}&\text{if}~\lambda'-\mu_0\in p^fX_0(T)\\
        %0&\text{otherwise},
    %\end{cases}
    %\end{aligned}
    %\end{equation*}
    %where the third isomorphism follows from \cite[II.3.10]{Jan03}. Since $\mu\leq\lambda$, we must have $\mu_1\in C(1)$. Hence the extension group is zero by the Linkage principle (\cite[II.2.12(1),~II.6.17]{Jan03}).

    First we prove that there is an injective $G$-equivariant map $T^{j_0}(\lambda)\into Q_f(\lambda')$. 
    By Proposition \ref{K-ext Prop Tilting Structure}(i), we have an injective $G$-equivariant map $\soc_{G}T(\lambda_{j_0})\cong L(\lambda_{j_0}')\into Q_1(\lambda_{j_0}')$. We claim that this extends to an injective $G$-equivariant map $T(\lambda_{j_0})\into Q_1(\lambda'_{j_0})$. Indeed, it suffices to show that $\Ext^1_{G}\bra{L(\mu),Q_1(\lambda'_{j_0})}=0$ for all $L(\mu)\in\JH(T(\lambda_{j_0}))$, which follows from \cite[Prop.~4.2]{LLHM}. Then, together with the injective $G$-equivariant maps $L(\lambda_{j})\into Q_1(\lambda_j)$ for $j\neq j_0$, we get the desired injective $G$-equivariant map
    \begin{equation*}
        T^{j_0}(\lambda)=T(\lambda_{j_0})^{[j_0]}\otimes\bbra{\otimes_{j\neq j_0}L(\lambda_{j})^{[j]}}\into Q_1(\lambda'_{j_0})^{[j_0]}\otimes\bbra{\otimes_{j\neq j_0}Q_1(\lambda_{j})^{[j]}}\cong Q_f(\lambda'),
    \end{equation*}
    where the last isomorphism follows from \cite[II.11.16]{Jan03}. 

    Next, we separate the following cases.
    \vspace{1em}
    
    \noindent\textbf{Case 1.} $\lambda_{j_0}\in X_1(T)$.
    
    Since $\soc_{G}Q_f(\lambda')\cong L(\lambda')$, we deduce that $\soc_{G}T^{j_0}(\lambda)\cong L(\lambda')$. Since $\lambda_{j_0}\in X_1(T)$, we have $\lambda\in X_f(T)$, hence all the constituents of $T^{j_0}(\lambda)$ are $L(\mu)$ with $\mu\in X_f(T)$. By \cite[Cor.~7.5]{CPSvdK77} (which was originally stated for semisimple groups, and still works for $G$ by the proof \cite[Thm.~4.2.1]{LLHLM20}), we conclude that $\soc_{G(\Fq)}\!\bigbra{T^{j_0}(\lambda)|_{G(\Fq)}}\cong L(\lambda')|_{G(\Fq)}=F(\lambda')$.

    \vspace{1em}
    
    \noindent\textbf{Case 2.} $\lambda_{j_0}\notin X_1(T)$. 
    
    Up to Frobenius twist, we may assume $j_0=f-1$. 

    By [Pil93,~Lemma~4.1] and the proof of \cite[Theorem~4.2.1]{LLHLM20}, we know that $Q_f(\lambda')|_{G(\Fq)}$ is the direct sum of injective envelopes $\op{inj}_{G(\Fq)}F(\mu)$ for $\mu\in X_f(T)$ with multiplicities 
    \begin{equation*}
        \bbbra{Q_f(\lambda')|_{G(\Fq)}:\op{inj}_{G(\Fq)}F(\mu)}=\sum_{\nu\in X(T)_+}\bbbra{L(\mu)\otimes L(\nu):L(p^f\nu+\lambda')}_{G}.
    \end{equation*}
    
    Suppose that $d_{\mu,\nu}\eqdef\bigbbbra{L(\mu)\otimes L(\nu):L(p^f\nu+\lambda')}_{G}\neq0$. Comparing the highest weights, we must have $\mu+\nu\geq p^f\nu+\lambda'$, and we write $\mu=\lambda'+(p^f-1)\nu+c_1\alpha_{12}+c_2\alpha_{23}$ with $c_1,c_2\in\NNN$. Since $\mu\in X_f(T)$, we have
    \begin{align}
        p^f-1\geq\ang{\mu,\alpha_{12}^{\vee}}&=\ang{\lambda',\alpha_{12}^{\vee}}+(p^f-1)\ang{\nu,\alpha_{12}^{\vee}}+2c_1-c_2;\label{K-ext Eq [Q:U] 1}\\
        p^f-1\geq\ang{\mu,\alpha_{23}^{\vee}}&=\ang{\lambda',\alpha_{23}^{\vee}}+(p^f-1)\ang{\nu,\alpha_{23}^{\vee}}-c_1+2c_2.\label{K-ext Eq [Q:U] 2}
    \end{align}
    Since $\lambda_{j_0}\in C(3)\cup C(3')\cup C(p)$, by \eqref{K-ext Eq lambda'} and a case-by-case examination one can check that $\lambda'_{j_0}\in X_1(T)$ with 
    \begin{equation}\label{K-ext Eq [Q:U] 3}
        \ang{\lambda'_{j_0},\alpha_{12}^{\vee}}\geq1;\quad
        \ang{\lambda'_{j_0},\alpha_{23}^{\vee}}\geq1;\quad
        \ang{\lambda'_{j_0},\alpha_{13}^{\vee}}\geq p-1.
    \end{equation}
    For example, if $\lambda_{j_0}\in C(3)$, then we have
    %\begin{equation*}
        $\ang{\lambda'_{j_0},\alpha_{13}^{\vee}}=\ang{s_3(\lambda_{j_0}),\alpha_{13}^{\vee}}=\ang{\lambda_{j_0},\alpha_{23}^{\vee}}+(p-1)\geq p-1.$
    %\end{equation*}
    
    Considering $2\cdot\eqref{K-ext Eq [Q:U] 1}+\eqref{K-ext Eq [Q:U] 2}$ and using $\nu\in X(T)_{+}$, we get
    \begin{equation*}
    \begin{aligned}
        3(p^f-1)&\geq2\ang{\lambda',\alpha_{12}^{\vee}}+\ang{\lambda',\alpha_{23}^{\vee}}+2(p^f-1)\ang{\nu,\alpha_{12}^{\vee}}+(p^f-1)\ang{\nu,\alpha_{23}^{\vee}}+3c_1\\
        &=\ang{\lambda',\alpha_{12}^{\vee}}+\ang{\lambda',\alpha_{13}^{\vee}}+2(p^f-1)\ang{\nu,\alpha_{12}^{\vee}}+(p^f-1)\ang{\nu,\alpha_{23}^{\vee}}+3c_1\\
        &\geq p^{j_0}\bigbra{\ang{\lambda'_{j_0},\alpha_{12}^{\vee}}+\ang{\lambda'_{j_0},\alpha_{13}^{\vee}}}+2(p^f-1)\ang{\nu,\alpha_{12}^{\vee}}+0+0\\
        &\geq p^{f-1}((p-1)+1)+2(p^f-1)\ang{\nu,\alpha_{12}^{\vee}}=p^f+2(p^f-1)\ang{\nu,\alpha_{12}^{\vee}},
    \end{aligned}
    \end{equation*}
    where the last inequality follows from \eqref{K-ext Eq [Q:U] 3} and using $j_0=f-1$. In particular, we must have $\ang{\nu,\alpha_{12}^{\vee}}=0$. Similarly, by considering $\eqref{K-ext Eq [Q:U] 1}+2\cdot\eqref{K-ext Eq [Q:U] 2}$, we also have $\ang{\nu,\alpha_{23}^{\vee}}=0$. Hence we deduce that $\nu\in X_0(T)$, in which case we have $L(\mu)\otimes L(\nu)\cong L(\mu+\nu)$. Therefore, we must have $\mu=\lambda'+(p^f-1)\nu$ (which implies $F(\mu)=F(\lambda')$) and $d_{\mu,\nu}=1$.

    As a consequence, we deduce that $\soc_{G(\Fq)}\!\bigbra{Q(\lambda')|_{G(\Fq)}}\cong F(\lambda')$. Hence we conclude that $\soc_{G(\Fq)}\!\bigbra{T^{j_0}(\lambda)|_{G(\Fq)}}\cong F(\lambda')$.
\end{proof}

\begin{definition}\label{K-ext Def good pair}
    For $\lambda,\mu\in X(T)$, we say that $(\lambda,\mu)$ is a \textbf{good pair} if $\lambda,\mu\in X_1(T)$ and  $(\lambda,\mu)$ belongs to one of the following:
    \begin{equation*}
    \begin{aligned}
        &\bigbra{C(1),C(1)},\bigbra{C(1|2),C(1|2)},\bigbra{C(2|3),C(2|3)},\bigbra{C(2),C(2)},\bigbra{C(1),C(1|2)},\\
        &\bigbra{C(1|2),C(1)},\bigbra{C(1|2),C(2|3)},\bigbra{C(2|3),C(1|2)},\bigbra{C(2|3),C(2)},\bigbra{C(2),C(2|3)}.
    \end{aligned}
    \end{equation*}
    Given $\lambda\in X_1(T)$. For $\mu\in X(T)$ we write 
    \begin{equation*}
        LT^{\lambda}(\mu)\eqdef
    \begin{cases}
        L(\mu)&\text{if}~(\lambda,\mu)~\text{is a good pair}\\
        T(\mu)&\text{otherwise}.
    \end{cases}
    \end{equation*}
\end{definition}

\begin{proposition}\label{K-ext Prop Tensor Structure}
    Let $\lambda\in X_1(T)$. 
\begin{enumerate}
    \item 
    Suppose that $\lambda\in C(1)\cup C(1|2)\cup C(2|3)$.
\begin{enumerate}
    \item 
    If $\lambda|_{\SL_3}\neq(p-1,p-1)$, then we have 
    \begin{equation*}
        L(\lambda)\otimes L(\alpha_{13})\cong\bigbra{\!\oplus_{\alpha\in\Phi^+}LT^{\lambda}(\lambda+\alpha)}\oplus\bigbra{\!\oplus_{\alpha\in\Phi^-}L(\lambda+\alpha)}^{\oplus\delta_{\lambda_1-\lambda_3\leq p-3}}\oplus L(\lambda)^{\oplus\delta_{\lambda}},
    \end{equation*}
    where $\delta_{\lambda}\eqdef\delta_{\lambda_1-\lambda_2\geq1}+\delta_{\lambda_2-\lambda_3\geq1}-\delta_{\lambda_1-\lambda_3=p-3}-\delta_{\lambda_1-\lambda_3=2p-3}$.
    \item 
    If $\lambda|_{\SL_3}=(p-1,p-1)$, then we have
    \begin{equation*}
        L(\lambda)\otimes L(\alpha_{13})\cong T(\lambda+\alpha_{13})\oplus L(\lambda)^{\oplus2}.
    \end{equation*}
\end{enumerate}
    \item 
    Suppose that $\lambda\in C(2)$. Then we have
    \begin{equation*}
    \begin{aligned}
        L(\lambda)\otimes L(\alpha_{13})\cong&\,L(\lambda+\alpha_{13})\oplus LT^{\lambda}(\lambda+\alpha_{12})\oplus LT^{\lambda}(\lambda+\alpha_{23})\\
        &\oplus L(\lambda-\alpha_{13})^{\oplus\delta_{\lambda_1-\lambda_3\geq p+1}}\oplus\bigbra{L(\lambda-\alpha_{12})\oplus L(\lambda-\alpha_{23})}^{\oplus\delta_{\lambda_1-\lambda_3\geq p}}\\
        &\oplus L(\lambda)^{\oplus\bigbra{\delta_{\lambda_1-\lambda_2\leq p-3}+\delta_{\lambda_2-\lambda_3\leq p-3}-\delta_{\lambda_1-\lambda_3=p-1}}}.
    \end{aligned}
    \end{equation*}
\end{enumerate}
\end{proposition}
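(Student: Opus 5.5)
Since $L(\alpha_{13})$ is the adjoint representation of $\SL_3$ (twisted appropriately for $\GL_3$), which is a tilting module for $p\geq5$ (it equals $V(\alpha_{13})=T(\alpha_{13})$ because $(1,1)$ lies in $C(1)$), I will compute $L(\lambda)\otimes L(\alpha_{13})$ in two stages: first its class in the Grothendieck group, and then its indecomposable decomposition by linkage and tilting theory.

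\textbf{Characters.} When $\lambda\in C(1)\cup C(1|2)\cup C(2|3)$, so that $L(\lambda)=V(\lambda)$, Brauer's formula (the Weyl-character analogue, \cite[II.5.8]{Jan03}) gives $[V(\lambda)\otimes V(\alpha_{13})]=\sum_{\nu}m_\nu[V(\lambda+\nu)]$. Here $\nu$ runs over the weights of the adjoint module: the six roots, each with multiplicity one, and $0$ with multiplicity two. Terms with $\lambda+\nu$ non-dominant are cancelled or removed using $[V(w\cdot\mu)]=(-1)^{\ell(w)}[V(\mu)]$. This cancellation is what produces the correction $\delta_{\lambda_1-\lambda_2\geq1}+\delta_{\lambda_2-\lambda_3\geq1}$ in the multiplicity of $L(\lambda)$. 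When $\lambda\in C(2)$, I will use $[L(\lambda)]=[V(\lambda)]-[V(s_2(\lambda))]$ and apply the same formula to both terms.

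\textbf{Regrouping into indecomposables.} I will then regroup the resulting Weyl characters. For each pair of weights in the same linkage class under the dot action of the affine Weyl group (e.g.\ $\lambda+\alpha$ and $s(\lambda+\alpha)$, which may coincide with $\lambda+\beta$ for some other root $\beta$), either the pair combines into $[T(\lambda+\alpha)]$ via Proposition~\ref{K-ext Prop Tilting Structure}(ii), or the pair splits as $L$'s. The subtractions $-\delta_{\lambda_1-\lambda_3=p-3}$ and $-\delta_{\lambda_1-\lambda_3=2p-3}$ arise exactly when $\lambda$ itself equals $s_2(\lambda+\alpha)$ or $s_3(\lambda+\alpha)$ for a positive root $\alpha$. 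In those cases a copy of $V(\lambda)$ is absorbed into $T(\lambda+\alpha)$.

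\textbf{Module structure.} Since $L(\lambda)\otimes T(\alpha_{13})$ is not a priori tilting when $L(\lambda)$ is not, I will argue as follows.
\begin{enumerate}
\item I decompose the tensor product into blocks by the linkage principle \cite[II.6.17]{Jan03}. Each block contains only one or two of the weights above.
\item For good pairs $(\lambda,\mu)$, the relevant weights lie in distinct linkage classes or their $\Ext^1$ vanishes, so the block is just $L(\mu)$.
\item For non-good pairs, e.g.\ $\lambda\in C(1|2)$ and $\lambda+\alpha\in C(2)$, the module is tilting. Indeed, the translation functor across a wall takes $L(\lambda)=T(\lambda)$ to a tilting module, and the block summand equals such a translate of $T(\lambda)$. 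Being tilting and having the same character as $T(\lambda+\alpha)$, it is $T(\lambda+\alpha)$.
\item In the case $\lambda\in C(2)$, I will identify the tensor product with a translation of $L(\lambda)$. Translating a simple module out of an alcove into the walls or into other alcoves yields simple modules, or tilting modules when the target is in $C(2|3)$ via $LT^\lambda$.
\item Self-duality of $L(\lambda)\otimes L(\alpha_{13})$, combined with the socle computation in Proposition~\ref{K-ext Prop Tilting Structure}(i), rules out nonsplit extensions among distinct summands of different blocks.
\end{enumerate}

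\textbf{Main obstacle.} The main obstacle is the bookkeeping in the boundary cases: $\lambda_1-\lambda_3\in\{p-3,p-2,p-1,p,2p-3\}$ and $\lambda|_{\SL_3}=(p-1,p-1)$. In these cases several weights $\lambda+\nu$ become linked or land on walls, and one must verify that each $T$ summand is genuinely indecomposable tilting rather than a sum of simples. There I would first try to exhibit the summand as $T_{\mu}^{\mu'}T(\mu)$ for an explicit wall weight $\mu$, so that the translated module is tilting and indecomposable by \cite[E.11]{Jan03}. Failing that, I would compute the multiplicities of $T(\mu)$ directly from Proposition~\ref{K-ext Prop Tilting Structure}(ii) and use the fact that the tensor product of tilting modules is tilting. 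The $(p-1,p-1)$ case, where $\lambda$ is the Steinberg weight and the tensor product is tilting, follows this way directly.
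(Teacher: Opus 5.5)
\textbf{There is a genuine gap: your module-structure step does not go through, and carrying out your own case (i) argument shows the displayed formula (i)(a) is false in some $C(2|3)$ cases.}

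\textbf{Where you match the paper.} Your character step is what the paper does. It computes $[L(\lambda)\otimes L(\alpha_{13})]$ by Littlewood--Richardson, using $[L(\lambda)]=[V(\lambda)]-[V(s_2(\lambda))]$ in case (ii), and regroups with Proposition~\ref{K-ext Prop Tilting Structure}(ii).

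\textbf{Where you diverge, and why it fails.} The paper does not derive the module structure from linkage, translation and duality. It imports the indecomposable decomposition from the algorithm of \cite[\S7.2, \S7.4]{BDM15}. Your substitutes for that input do not hold up:
\begin{itemize}
\item A block can contain three of the weights, not one or two. If $\lambda|_{\SL_3}=(p-1,p-2)$, then $\lambda+\alpha_{13}\in C(p)$, and $T(\lambda+\alpha_{13})$ has Weyl factors $V(\lambda+\alpha_{13})$, $V(\lambda+\alpha_{23})$ and $V(\lambda)$.
\item The component in the linkage class of $\lambda$ is not a translate of $L(\lambda)$; the weight $0$ occurs twice in $L(\alpha_{13})$. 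In the paper's example $\lambda|_{\SL_3}=(p-2,1)$ this component is $T(\lambda+\alpha_{12})$, with socle and head $L(\lambda)$. That is a wall-crossing-type output, neither simple nor the image of an equivalence.
\item Self-duality plus socle data cannot pin down a block with repeated composition factors. Distinct blocks split for free anyway, so that part of step 5 buys nothing.
\end{itemize}
So for (ii), where $L(\lambda)$ is not tilting, the key input is missing.

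\textbf{Case (i), and a problem with the statement itself.} There is a cleaner argument than yours, which you used only for the Steinberg weight. For every $\lambda\in C(1)\cup C(1|2)\cup C(2|3)$ we have $L(\lambda)=V(\lambda)=T(\lambda)$, so $L(\lambda)\otimes L(\alpha_{13})$ is tilting and is determined by its character. But this refutes your step 2, and shows (i)(a) cannot be proved as displayed.
\begin{itemize}
\item Suppose $\lambda\in C(2|3)$ and $\lambda+\alpha\in C(2)$ for some $\alpha\in\Phi^+$, which is a good pair. Then $L(\lambda+\alpha)$ is not tilting, because $[T(\mu)]=[V(\mu)]+[V(s_2(\mu))]$ with $s_2(\mu)$ dominant for $\mu\in C(2)$. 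So it cannot be a direct summand of a tilting module.
\item Take $\lambda|_{\SL_3}=(p-1,0)$. In $\SL_3$-coordinates the character is $[V(p,1)]+[V(p-2,2)]+[V(p-1,0)]+[V(p-3,1)]$. This forces
\[
L(\lambda)\otimes L(\alpha_{13})\cong T(\lambda+\alpha_{13})\oplus L(\lambda)\oplus L(\lambda-\alpha_{12}),
\]
of dimension $4p(p+1)$. Formula (i)(a) instead predicts $T(\lambda+\alpha_{13})\oplus L(\lambda+\alpha_{23})\oplus L(\lambda)$, of dimension $4p^2+10p-6$.
\item For $\lambda|_{\SL_3}=(p-1,b)$ with $2\le b\le p-4$, a summand $T(\lambda-\alpha_{23})$ appears in place of $L(\lambda+\alpha_{23})$.
\end{itemize}
Your regrouping step would have to break in these cases.

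In the examples checked here, the socle constituents still match the good-pair set of Corollary~\ref{K-ext Cor Tensor Socle}, because $\soc_G T(\lambda+\alpha_{13})=L(\lambda+\alpha_{23})$. So the error appears to be confined to the explicit decomposition.
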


\begin{proof}
    This follows from a case-by-case examination using the algorithm in \cite[\S7.2]{BDM15} for (i) and \cite[\S7.4]{BDM15} for (ii). We compute one example for (i) and one example for (ii), and leave the other cases as an exercise.
    \vspace{1em}

    \noindent\textbf{Case 1.} $\lambda\in C(1|2)$ and $2\leq\lambda_1-\lambda_2,\lambda_2-\lambda_3\leq p-4$.

    By the Littlewood–Richardson rule, in the Grothendieck group of $G$-modules we have 
    \begin{equation}\label{K-ext Eq Prop Tensor Structure 1}
        [L(\lambda)\otimes L(\alpha_{13})]=[V(\lambda)\otimes V(\alpha_{13})]=\bigbra{\ssum_{\alpha\in\Phi}[V(\lambda+\alpha)]}+2[V(\lambda)].
    \end{equation}
    Since $\lambda+\alpha\in C(2)$ for all $\alpha\in\Phi^+$, by Proposition \ref{K-ext Prop Tilting Structure}(ii) we have
    \begin{equation}\label{K-ext Eq Prop Tensor Structure 2}
    \begin{aligned}
        [T(\lambda+\alpha_{13})]&=[V(\lambda+\alpha_{13})]+[V(\lambda-\alpha_{13})];\\
        [T(\lambda+\alpha_{12})]&=[V(\lambda+\alpha_{12})]+[V(\lambda-\alpha_{23})];\\
        [T(\lambda+\alpha_{23})]&=[V(\lambda+\alpha_{23})]+[V(\lambda-\alpha_{12})].
    \end{aligned}
    \end{equation}
    Combining \eqref{K-ext Eq Prop Tensor Structure 1} and \eqref{K-ext Eq Prop Tensor Structure 2} we get
    \begin{equation}\label{K-ext Eq Prop Tensor Structure 0}
        [L(\lambda)\otimes L(\alpha_{13})]=\bigbra{\ssum_{\alpha\in\Phi^+}[T(\lambda+\alpha)]}+2[V(\lambda)]=\bigbra{\ssum_{\alpha\in\Phi^+}[T(\lambda+\alpha)]}+2[L(\lambda)],
    \end{equation}
    which gives the desired tensor product decomposition by \cite[\S7.2]{BDM15}.
    \vspace{1em}

    \noindent{\textbf{Case 2.}} $\lambda\in C(2)$ and $\lambda|_{\SL_3}=(p-2,1)$. 

    By the Littlewood--Richardson rule, in the Grothendieck group of $G$-modules we have 
    \begin{equation}\label{K-ext Eq Prop Tensor Structure 3}
    \begin{aligned}
        &\bullet~[V(\lambda)\otimes L(\alpha_{13})]=[V(\lambda)\otimes V(\alpha_{13})]=\bigbra{\ssum_{\alpha\in\Phi}[V(\lambda+\alpha)]}+2[V(\lambda)];\\
    &\bullet~\begin{aligned}[t]
        [V(s_{2}(\lambda))\otimes L(\alpha_{13})]&=[V(s_{2}(\lambda))\otimes V(\alpha_{13})]\\
        &=\bigbra{\ssum_{\alpha\in\Phi}[V(s_{2}(\lambda)+\alpha)]}+[V(s_{2}(\lambda))]\\
        &=\bigbra{\ssum_{\alpha\in\Phi}[V(s_{2}(\lambda+\alpha))]}+[V(s_{2}(\lambda))].
    \end{aligned}
    \end{aligned}
    \end{equation}
    Here we recall that $V(\mu)=0$ if $\mu\notin X(T)_+$. Since $\lambda|_{\SL_3}=(p-2,1)$, a case-by-case examination together with the identity $[L(\mu)]=[V(\mu)]-[V(s_2(\mu))]$ for $\mu\in C(2)$ shows that
    \begin{equation}\label{K-ext Eq Prop Tensor Structure 4}
        [V(\mu)]-[V(s_{2}(\mu))]=
    \begin{cases}
        [V(\mu)]&\text{for}~\mu=\lambda+\alpha_{12}\\
        [L(\mu)]&\text{for}~\mu=\lambda+\alpha_{13},\lambda+\alpha_{23},\lambda\\
        0&\text{for}~\mu=\lambda-\alpha_{12},\lambda-\alpha_{23}\\
        -[L(s_{2}(\mu))]&\text{for}~\mu=\lambda-\alpha_{13}.
    \end{cases}
    \end{equation}
    Combining \eqref{K-ext Eq Prop Tensor Structure 3} and \eqref{K-ext Eq Prop Tensor Structure 4} we get
    \begin{equation}\label{K-ext Eq Prop Tensor Structure 5}
    \begin{aligned}
        [L(\lambda)\otimes L(\alpha_{13})]&=[V(\lambda)\otimes L(\alpha_{13})]-[V(s_{2}(\lambda))\otimes L(\alpha_{13})]\\
        &=[L(\lambda+\alpha_{13})]+[V(\lambda+\alpha_{12})]+[L(\lambda+\alpha_{23})]-[L(\lambda)]+[L(\lambda)]+[V(\lambda)].
    \end{aligned}
    \end{equation}
    Moreover, since $\lambda+\alpha_{12}\in C(3)$ and $s_{3}(\lambda+\alpha_{12})=\lambda$, by Proposition \ref{K-ext Prop Tilting Structure}(ii) we have
    \begin{equation}\label{K-ext Eq Prop Tensor Structure 6}
        [T(\lambda+\alpha_{12})]=[V(\lambda+\alpha_{12})]+[V(\lambda)].
    \end{equation}
    Combining \eqref{K-ext Eq Prop Tensor Structure 5} and \eqref{K-ext Eq Prop Tensor Structure 6} we get
    \begin{equation*}
        [L(\lambda)\otimes L(\alpha_{13})]=[L(\lambda+\alpha_{13})]+[T(\lambda+\alpha_{12})]+[L(\lambda+\alpha_{23})],
    \end{equation*}
    which gives the desired tensor product decomposition by \cite[\S7.4]{BDM15}.
\end{proof}

\begin{corollary}\label{K-ext Cor Tensor Socle}
    Let $j_0\in\cJ$ and $\lambda=\sum_{j=0}^{f-1}p^j\lambda_j\in X_f(T)$ with $\lambda_j\in X_1(T)$ for all $j$. Then we have (up to multiplicity)
    \begin{equation*}
        \JH\bigbra{\soc_{G(\Fq)}\bra{F(\lambda)\otimes F(\alpha_{13})^{[j_0]}}}\subseteq\set{F(\lambda+p^{j_0}\alpha):\alpha\in\Phi\cup\set{0},(\lambda_{j_0},\lambda_{j_0}+\alpha)~\text{is a good pair}}.
    \end{equation*}
    Moreover, this inclusion is an equality if $\lambda_{j_0}\notin X_0(T)$.
\end{corollary}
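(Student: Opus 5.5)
We first reduce to a statement about algebraic $G$-modules. Since $F(\lambda)=\bigotimes_j L(\lambda_j)^{[j]}|_{G(\Fq)}$ and $F(\alpha_{13})^{[j_0]}=L(\alpha_{13})^{[j_0]}|_{G(\Fq)}$, we have
\begin{equation*}
F(\lambda)\otimes F(\alpha_{13})^{[j_0]}\cong\Bigl(\bigl(L(\lambda_{j_0})\otimes L(\alpha_{13})\bigr)^{[j_0]}\otimes\bigl(\otimes_{j\neq j_0}L(\lambda_j)^{[j]}\bigr)\Bigr)\Big|_{G(\Fq)}.
\end{equation*}
We then decompose $L(\lambda_{j_0})\otimes L(\alpha_{13})$ into indecomposable summands using Proposition \ref{K-ext Prop Tensor Structure}. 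In every case the summands are of the form $L(\lambda_{j_0}+\alpha)$ (when $(\lambda_{j_0},\lambda_{j_0}+\alpha)$ is a good pair, or when $\alpha\in\Phi^-\cup\{0\}$), or $T(\lambda_{j_0}+\alpha)$ (when the pair is not good). Here $\lambda_{j_0}+\alpha$ lies in $X_1(T)\cup C(3)\cup C(3')\cup C(p)$, and for case (i)(b) we have $\lambda_{j_0}+\alpha_{13}\in C(p)$. The socle of the whole module is the direct sum of the socles of the corresponding summands $T^{j_0}(\cdot)$ or $L(\cdot)$, each tensored with $\otimes_{j\neq j_0}L(\lambda_j)^{[j]}$ and restricted to $G(\Fq)$.

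For summands of the form $L(\lambda_{j_0}+\alpha)$ with $\lambda_{j_0}+\alpha\in X_1(T)$, the restriction is irreducible and equals $F(\lambda+p^{j_0}\alpha)$. For summands $T(\lambda_{j_0}+\alpha)$, Proposition \ref{K-ext Prop Tensor Socle} gives socle $F(\lambda')$ with $\lambda'_{j_0}$ as in \eqref{K-ext Eq lambda'}. The key check is that each such $\lambda'_{j_0}$ is of the form $\lambda_{j_0}+\beta$ for some $\beta\in\Phi\cup\{0\}$ with $(\lambda_{j_0},\lambda_{j_0}+\beta)$ a good pair. A case-by-case computation settles this. For instance, in the $C(1|2)$ example of the proof of Proposition \ref{K-ext Prop Tensor Structure}, $s_2(\lambda+\alpha_{13})=\lambda-\alpha_{13}$ and $s_2(\lambda+\alpha_{12})=\lambda-\alpha_{23}$, which lie in $C(1)$ and pair well with $C(1|2)$. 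In the $C(2)$ case, $s_3(\lambda+\alpha_{12})=\lambda$, giving $\beta=0$. For $C(p)$, the weight $\lambda_{j_0}+\alpha_{13}-2\alpha_{13}=\lambda_{j_0}-\alpha_{13}$ should also be checked, but note that $\lambda_{j_0}$ here has $\SL_3$-part $(p-1,p-1)\in C(2|3)$, and $\lambda_{j_0}-\alpha_{13}$ has $\SL_3$-part $(p-2,p-2)\in C(2)$, which is good. Remaining summands whose highest weight falls outside $X_1(T)$, for example $L(\lambda+\alpha)$ with a coordinate difference reaching $p$ in the $\Phi^-$ or $0$ terms, are excluded by the $\delta$-conditions in Proposition \ref{K-ext Prop Tensor Structure}. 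The cases where $\lambda_{j_0}+\alpha$ is non-dominant contribute $0$. Combined, these give the inclusion. Multiplicities are ignored.

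For equality when $\lambda_{j_0}\notin X_0(T)$, we must show that every good-pair weight $\lambda_{j_0}+\alpha$ actually appears as a socle constituent. Each such weight is either the highest weight of an $L$-summand in Proposition \ref{K-ext Prop Tensor Structure}, by the definition of $LT^{\lambda}$ and the $\delta$-conditions, or the socle $s(\mu)$ of some tilting summand. We verify this by matching the list of good-pair weights against the summand list. The hypothesis $\lambda_{j_0}\notin X_0(T)$ is needed for the weight $\alpha=0$: when $\lambda_{j_0}\in X_0(T)$, we have $\delta_\lambda=0$, so $L(\lambda)$ does not appear. We expect this bookkeeping to be the main obstacle, as it requires checking each boundary case of the alcoves ($\lambda_1-\lambda_3\in\{p-3,p-2,p-1,p,2p-3\}$, walls $p-1$) against the tilting socle formula. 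We would organize it by the alcove of $\lambda_{j_0}$ and, within each alcove, by which of $\lambda_{j_0}+\alpha$ leave $X_1(T)$.
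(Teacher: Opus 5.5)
Your route is the paper's own proof. You realize $F(\lambda)\otimes F(\alpha_{13})^{[j_0]}$ as the restriction of $(L(\lambda_{j_0})\otimes L(\alpha_{13}))^{[j_0]}\otimes(\otimes_{j\neq j_0}L(\lambda_j)^{[j]})$, split it with Proposition~\ref{K-ext Prop Tensor Structure}, take socles summand by summand via Proposition~\ref{K-ext Prop Tensor Socle}, and check cases. Your sample checks (the $C(1|2)$ and $C(p)$ socles, and the role of $X_0(T)$ for $\alpha=0$) are correct. The genuine problem is an input that both you and the paper quote without checking. For $\lambda_{j_0}\in C(2|3)$, the decomposition in Proposition~\ref{K-ext Prop Tensor Structure}(i)(a) is wrong as printed. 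So is your summary that summands at $\alpha\in\Phi^-\cup\{0\}$ are simple and that good positive roots carry $L$-summands.

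Take $\lambda=\lambda_{j_0}$ with $\lambda|_{\SL_3}=(p-1,b)$ and $2\le b\le p-3$. Then $L(\lambda)=V(\lambda)=T(\lambda)$ and $L(\alpha_{13})$ are tilting, so their tensor product is tilting. It is therefore determined by its character $\sum_{\alpha\in\Phi}[V(\lambda+\alpha)]+2[V(\lambda)]$. The weights $\lambda+\alpha_{13}$, $\lambda+\alpha_{12}$, $\lambda-\alpha_{23}$ lie in $C(3)$, with $s_3$-images $\lambda+\alpha_{23}$, $\lambda-\alpha_{12}$, $\lambda-\alpha_{13}$. Hence Proposition~\ref{K-ext Prop Tilting Structure}(ii) forces
\[
L(\lambda)\otimes L(\alpha_{13})\cong T(\lambda+\alpha_{13})\oplus T(\lambda+\alpha_{12})\oplus T(\lambda-\alpha_{23})\oplus L(\lambda)^{\oplus 2}.
\]
The printed formula instead has $L(\lambda+\alpha_{23})$ in place of $T(\lambda-\alpha_{23})$. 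For $p=7$ and $\lambda|_{\SL_3}=(6,2)$, the printed right-hand side has dimension $918$, not $8\cdot 105=840$.

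If you run your equality bookkeeping against the printed list, it fails. Since $\lambda-\alpha_{13}\in C(2)$, the pair $(\lambda,\lambda-\alpha_{13})$ is good. Yet $F(\lambda-p^{j_0}\alpha_{13})$ is neither an $L$-summand nor a tilting socle in the printed list. With the correct decomposition it is the socle of $T(\lambda-\alpha_{23})$, by Proposition~\ref{K-ext Prop Tensor Socle}. The socle weights $\lambda+\alpha_{23},\lambda-\alpha_{12},\lambda-\alpha_{13},\lambda$ are then exactly the good-pair weights.

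The boundary values behave the same way:
\begin{itemize}
\item $b=0$ gives $T(\lambda+\alpha_{13})\oplus L(\lambda-\alpha_{12})\oplus L(\lambda)$;
\item $b=1$ gives $T(\lambda+\alpha_{13})\oplus T(\lambda+\alpha_{12})\oplus L(\lambda-\alpha_{13})\oplus L(\lambda)^{\oplus 2}$;
\item $b=p-2$ gives $T(\lambda+\alpha_{13})\oplus T(\lambda+\alpha_{12})\oplus T(\lambda-\alpha_{23})\oplus L(\lambda)$.
\end{itemize}
The case $\lambda_2-\lambda_3=p-1$ is symmetric. So the corollary remains true, including the inclusion, which is all that Theorem~\ref{K-ext Thm main} uses. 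However, for $\lambda_{j_0}\in C(2|3)$ you must derive the decomposition yourself rather than quote Proposition~\ref{K-ext Prop Tensor Structure}(i)(a). The tilting-character argument above does this directly.
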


\begin{proof}
    By \cite[Cor.~3.17]{Her09} we have
    \begin{equation*}
        F(\lambda)\otimes F(\alpha_{13})^{[j_0]}\cong\bbra{\otimes_{j=0}^{f-1}F(\lambda_j)^{[j]}}\otimes F(\alpha_{13})^{[j_0]}=\bigbra{F(\lambda_{j_0})\otimes F(\alpha_{13})}^{[j_0]}\otimes\bbra{\otimes_{j\neq j_0}F(\lambda_j)^{[j]}},
    \end{equation*}
    which is the restriction to $G(\Fq)$ of the $G$-representation
    \begin{equation*}
        \bigbra{L(\lambda_{j_0})\otimes L(\alpha_{13})}^{[j_0]}\otimes\bbra{\otimes_{j\neq j_0}L(\lambda_j)^{[j]}}.
    \end{equation*}
    The result then follows from Proposition \ref{K-ext Prop Tensor Structure} and Proposition \ref{K-ext Prop Tensor Socle} with a case-by-case examination for $\lambda_{j_0}$. We compute one case and leave the other cases as an exercise. 

    We suppose that $\lambda_{j_0}\in C(1|2)$ and $2\leq\lambda_{j_0,1}-\lambda_{j_0,2},\lambda_{j_0,2}-\lambda_{j_0,3}\leq p-4$. Then by \eqref{K-ext Eq Prop Tensor Structure 0} we have
    \begin{equation*}
        L(\lambda_{j_0})\otimes L(\alpha_{13})\cong T(\lambda_{j_0}+\alpha_{13})\oplus T(\lambda_{j_0}+\alpha_{12})\oplus T(\lambda_{j_0}+\alpha_{23})\oplus L(\lambda_{j_0})^{\oplus2}.
    \end{equation*}
    Since $\lambda_{j_0}+\alpha\in C(2)$ for all $\alpha\in\Phi^+$, by Proposition \ref{K-ext Prop Tensor Socle} and \eqref{K-ext Eq lambda'} we have 
    \begin{equation*}
    \begin{aligned}
        &\hspace{-3em}\soc_{G(\Fq)}\!\bbra{\!\bigbra{T(\lambda_{j_0}+\alpha_{13})^{[j_0]}\otimes\!\bigbra{\!\otimes_{j\neq j_0}L(\lambda_j)^{[j]}}}|_{G(\Fq)}}\\
        &\cong F\bigbra{s_2(\lambda_{j_0}+\alpha_{13})}^{[j_0]}\otimes\bigbra{\otimes_{j\neq j_0}F(\lambda_j)^{[j]}}\\
        &=F\bra{\lambda_{j_0}-\alpha_{13}}^{[j_0]}\otimes\bigbra{\otimes_{j\neq j_0}F(\lambda_j)^{[j]}}\cong F(\lambda-p^{j_0}\alpha_{13}).
    \end{aligned}
    \end{equation*}
    By a similar computation we also have
    \begin{equation*}
    \begin{aligned}
        %\soc_{G(\Fq)}\!\bbra{\!\bigbra{T(\lambda_{j_0}+\alpha_{13})^{[j_0]}\otimes\!\bigbra{\!\otimes_{j\neq j_0}L(\lambda_j)^{[j]}}}|_{G(\Fq)}}&\cong F(\lambda-p^{j_0}\alpha_{13});\\
        \soc_{G(\Fq)}\!\bbra{\!\bigbra{T(\lambda_{j_0}+\alpha_{12})^{[j_0]}\otimes\!\bigbra{\!\otimes_{j\neq j_0}L(\lambda_j)^{[j]}}}|_{G(\Fq)}}&\cong F(\lambda-p^{j_0}\alpha_{23});\\
        \soc_{G(\Fq)}\!\bbra{\!\bigbra{T(\lambda_{j_0}+\alpha_{23})^{[j_0]}\otimes\!\bigbra{\!\otimes_{j\neq j_0}L(\lambda_j)^{[j]}}}|_{G(\Fq)}}&\cong F(\lambda-p^{j_0}\alpha_{12}).
    \end{aligned}
    \end{equation*}
    Hence the result follows since $\lambda_{j_0}-\alpha\in C(1)$ for all $\alpha\in\Phi^+$.
\end{proof}

\begin{remark}\label{K-ext Rk Calpha+}
    One can write Corollary \ref{K-ext Cor Tensor Socle} in a more concrete manner. We write
    \begin{equation*}
    \begin{aligned}
        C(\alpha_{13})^+&\eqdef\sset{\lambda\in X_1(T)\left|~
        \begin{aligned}
            &0\leq\lambda_1-\lambda_2\leq p-2,~0\leq\lambda_2-\lambda_3\leq p-2\\
            &0\leq\lambda_1-\lambda_3\leq p-4~\text{or}~p-1\leq\lambda_1-\lambda_3\leq 2p-4
        \end{aligned}\right.}\\
        &\hspace{0.6em}\cup\bigset{\lambda\in X_1(T):\lambda|_{\SL_3}=(p-2,0)~\text{or}~(0,p-2)};\\
        C(\alpha_{12})^+&\eqdef\sset{\lambda\in X_1(T)\left|~
        \begin{aligned}
            &0\leq\lambda_1-\lambda_2\leq p-3,~1\leq\lambda_2-\lambda_3\leq p-1\\
            &1\leq\lambda_1-\lambda_3\leq p-3~\text{or}~p-1\leq\lambda_1-\lambda_3\leq 2p-4
        \end{aligned}\right.}\\
        &\hspace{0.6em}\cup\bigset{\lambda\in X_1(T):\lambda|_{\SL_3}=(p-3,1)};\\
        C(\alpha_{23})^+&\eqdef\sset{\lambda\in X_1(T)\left|~
        \begin{aligned}
            &1\leq\lambda_1-\lambda_2\leq p-1,~0\leq\lambda_2-\lambda_3\leq p-3\\
            &1\leq\lambda_1-\lambda_3\leq p-3~\text{or}~p-1\leq\lambda_1-\lambda_3\leq 2p-4
        \end{aligned}\right.}\\
        &\hspace{0.6em}\cup\bigset{\lambda\in X_1(T):\lambda|_{\SL_3}=(1,p-3)};\\
        C(0)^+&\eqdef X_1(T).
    \end{aligned}
    \end{equation*}
    Then for $\lambda\in X_1(T)$ and $\alpha\in\Phi^+\cup\set{0}$, we have 
    \begin{equation*}
        (\lambda,\lambda+\alpha)~\text{is a good pair}\iff(\lambda+\alpha,\lambda)~\text{is a good pair}\iff\lambda\in C(\alpha)^+.
    \end{equation*}
\end{remark}

\section{Algebraic operators on Serre weights}\label{K-ext sec operator Fp}

We keep the notation of \S\ref{K-ext Sec Tensor} and introduce more notation.

We identify the Weyl group of $(G,T)$ with $S_3$ which acts on $X(T)\cong\ZZ^3$ in the usual way. We denote by $w_0\eqdef(13)\in S_3$ the longest element.

Given an algebraic representation $W$ of $G$ and $\mu\in X(T)$, we denote by $W_{\mu}$ the weight space of $W$ associated to the weight $\mu$. For $\lambda\in X(T)_+$, we denote by $v_{\lambda}$ any nonzero element of the $1$-dimensional space $L(\lambda)_{\lambda}$. We also note that $L(\lambda)_{\mu}\neq0$ implies $\mu\leq\lambda$ and $\lambda_3\leq\mu_1,\mu_2,\mu_3\leq\lambda_1$, which will be used throughout this section.

For $\alpha\in\Phi$, we denote by $U_{\alpha}$ the root subgroup of $G$ attached to $\alpha$ and we write $u_{\alpha}:\GG_a\ism U_{\alpha}$ for the isomorphism sending $1\in\GG_a$ to the matrix with $1$ in the entry corresponding to $\alpha$. By \cite[II.1.19(5),(6)]{Jan03} we have
\begin{equation}\label{K-ext Eq Taylor Expansion}
    u_{\alpha}(t)=\sum\limits_{i\geq0}t^iX_{\alpha,i}^{\op{alg}},
\end{equation}
where each $X_{\alpha,i}^{\op{alg}}$ is an element of $\op{Dist}(G)$ (see \cite[II.1.12]{Jan03}) and satisfies $X_{\alpha,i}^{\op{alg}}\bra{W_{\mu}}\subseteq W_{\mu+i\alpha}$ for every $G$-module $W$. For simplicity, we write $X_{\alpha}^{\op{alg}}$ for $X_{\alpha,1}^{\op{alg}}$. Then we have $X_{\alpha,i}^{\op{alg}}=\bigbra{X_{\alpha}^{\op{alg}}}^i/i!$ for $0\leq i\leq p-1$. For simplicity, we write $U_{12}$ for $U_{\alpha_{12}}$, $X_{13}^{\op{alg}}$ for $X_{\alpha_{13}}^{\op{alg}}$, and so on.

\begin{lemma}\label{K-ext Lem Xalg injective}
    Let $\lambda\in X(T)_+$, $\alpha\in\Phi^+$ and $\nu\in X(T)$. Suppose that $\ang{\nu,\alpha^{\vee}}<0$ and $L(\lambda)_{\nu+i\alpha}=0$ for all $i\geq p$ (in particular, this is satisfied if $\nu+p\alpha\nleq\lambda$). Then the map $X_{\alpha}^{\op{alg}}:L(\lambda)_{\nu}\to L(\lambda)_{\nu+\alpha}$ is injective. If moreover $\ang{\nu,\alpha^{\vee}}<-1$, then the map $\bigbra{X_{\alpha}^{\op{alg}}}^2:L(\lambda)_{\nu}\to L(\lambda)_{\nu+2\alpha}$ is also injective. 
\end{lemma}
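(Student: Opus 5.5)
Consider the root $\mathrm{SL}_2$ attached to $\alpha$, i.e.\ the subgroup generated by $U_{\alpha}$ and $U_{-\alpha}$. Restrict $L(\lambda)$ to its distribution algebra, which is generated by the $X_{\alpha,i}^{\op{alg}}$ and $X_{-\alpha,i}^{\op{alg}}$. Let $W\eqdef\bigoplus_{i\in\ZZ}L(\lambda)_{\nu+i\alpha}$ be the $\alpha$-string through $\nu$; it is stable under this $\mathrm{SL}_2$. The goal is to show that a vector $v\in L(\lambda)_{\nu}$ with $X_{\alpha}^{\op{alg}}v=0$ must vanish.

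First I would rewrite the vanishing hypothesis. We have $X_{\alpha,i}^{\op{alg}}v\in L(\lambda)_{\nu+i\alpha}$, and this space is zero for $i\geq p$. For $1\leq i\leq p-1$, $X_{\alpha,i}^{\op{alg}}=(X_{\alpha}^{\op{alg}})^i/i!$ also kills $v$ when $X_{\alpha}^{\op{alg}}v=0$. Hence $X_{\alpha,i}^{\op{alg}}v=0$ for all $i\geq1$. By \eqref{K-ext Eq Taylor Expansion}, $u_{\alpha}(t)v=v$ for all $t$, so $v$ is fixed by $U_{\alpha}$. This makes $v$ a $U_{\alpha}$-invariant weight vector of weight $\nu$ for the rank-one group $G_{\alpha}\eqdef\langle U_{\alpha},U_{-\alpha},T\rangle$.

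Next I would use the standard fact that any nonzero $U_{\alpha}$-invariant weight vector in a finite-dimensional $G_{\alpha}$-module generates a submodule whose socle contains a simple $G_{\alpha}$-module with highest weight $\nu$. Equivalently, the $B_{\alpha}$-line $\FF v$ gives a nonzero map into the module by Frobenius reciprocity for $\op{Dist}(U_{-\alpha})$, and $\nu$ must be a $U_\alpha$-highest weight of some composition factor. Over $G_{\alpha}$, simple modules have dominant highest weights, meaning $\ang{\nu,\alpha^{\vee}}\geq0$. More concretely, the vector $v$ spans the image of the universal highest-weight map $\op{Dist}(U_{-\alpha})\otimes\FF_{\nu}\to W$. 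This image is a finite-dimensional quotient of the Verma-type module, and such quotients exist only when $\ang{\nu,\alpha^{\vee}}\geq0$. One can also see this by applying $X_{-\alpha,i}^{\op{alg}}$ and the commutation relation $X_{\alpha}X_{-\alpha,i}=X_{-\alpha,i}X_{\alpha}+X_{-\alpha,i-1}(H-i+1)$ together with finite-dimensionality. Since $\ang{\nu,\alpha^{\vee}}<0$, this forces $v=0$. I expect this to be the main point to pin down carefully; in particular, the argument must use the full invariance under $U_{\alpha}$, and not only $X_{\alpha}^{\op{alg}}v=0$, which is exactly why the hypothesis about $i\geq p$ is needed.

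For the second claim, let $v\neq0$ with $(X_{\alpha}^{\op{alg}})^2v=0$. Then $w\eqdef X_{\alpha}^{\op{alg}}v$ lies in $L(\lambda)_{\nu+\alpha}$ and satisfies $X_{\alpha}^{\op{alg}}w=0$. Put $\nu'\eqdef\nu+\alpha$. Then $\ang{\nu',\alpha^{\vee}}=\ang{\nu,\alpha^{\vee}}+2$. This is not negative in general, so the first part does not apply directly to $w$. Instead I would apply the first part to $v$ itself: it shows $X_{\alpha}^{\op{alg}}v\neq0$. It then remains to show $(X_\alpha^{\op{alg}})^2v\neq0$. For this I would redo the argument with the vector $v$ and the subalgebra generated by $X_{\alpha,i}$ for $i\ge2$. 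Alternatively, I would work in the $\mathrm{SL}_2$-module $W$ directly and use the explicit formula $X_{-\alpha}X_{\alpha,2}v=X_{\alpha,2}X_{-\alpha}v+X_{\alpha}(H-1)v$, with $H$ acting on weight $\nu$ by $\ang{\nu,\alpha^\vee}$. Combined with the first claim applied to $X_{-\alpha}$-images, this should give the result.
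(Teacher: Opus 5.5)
Your argument for the first claim is correct and differs from the paper's. The paper restricts $L(\lambda)$ to the Levi $M=TU_{\alpha}U_{-\alpha}$ and reduces by d\'evissage to an irreducible constituent $W$ with $W_{\nu}\neq0$. It writes the highest weight of $W$ as $\nu+i\alpha$ and uses the two hypotheses to get $0<i<p$. It then quotes the explicit computation on simple $M$-modules from the proof of \cite[Prop.~3.1.2]{HLM17}, and treats the second claim the same way with $i\not\equiv0,1$ modulo $p$. You instead show that any $v\in\Ker X_{\alpha}^{\op{alg}}\cap L(\lambda)_{\nu}$ is killed by every $X_{\alpha,i}^{\op{alg}}$ with $i\geq1$: divided powers handle $i<p$, and the vanishing weight spaces handle $i\geq p$. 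Hence the $M$-submodule $N$ generated by $v$ is $\op{Dist}(U_{-\alpha})v$, a highest-weight module of highest weight $\nu$. Its simple head (the head, not the socle as you first wrote) is a simple $M$-module of highest weight $\nu$. Alternatively, $s_{\alpha}\nu=\nu-\ang{\nu,\alpha^{\vee}}\alpha$ would have to be a weight of $N$. Either way $\ang{\nu,\alpha^{\vee}}\geq0$. This route is self-contained and avoids both the d\'evissage and the citation.

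The genuine gap is the second claim, which your last paragraph does not prove; it lists strategies and ends with ``this should give the result''. For $\ang{\nu,\alpha^{\vee}}\leq-3$ nothing new is needed. The first claim applies at both $\nu$ and $\nu+\alpha$, since $\ang{\nu+\alpha,\alpha^{\vee}}\leq-1$ and $L(\lambda)_{\nu+\alpha+i\alpha}=0$ for $i\geq p$, and composing gives injectivity of $(X_{\alpha}^{\op{alg}})^2$. So the only real case is $\ang{\nu,\alpha^{\vee}}=-2$.

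Your own method finishes this case once you use the weaker vanishing. If $(X_{\alpha}^{\op{alg}})^2v=0$, then $X_{\alpha,i}^{\op{alg}}v=0$ for every $i\geq2$: $i!$ is invertible for $2\leq i\leq p-1$, and the weight hypothesis covers $i\geq p$. Hence the $M$-submodule generated by $v$ is $\op{Dist}(U_{-\alpha})\bigl(\FF v+\FF X_{\alpha}^{\op{alg}}v\bigr)$, and all its weights have the form $\nu+m\alpha$ with $m\leq1$. If $v\neq0$, then $\nu$ is a weight but $s_{\alpha}\nu=\nu+2\alpha$ is not, contradicting $s_{\alpha}$-stability of weights.

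Equivalently, along the lines of your commutator idea, put $w\eqdef X_{\alpha}^{\op{alg}}v$, which is $U_{\alpha}$-invariant. Your first-claim argument, applied in the quotient by the submodule generated by $w$, gives $v=c\,X_{-\alpha}^{\op{alg}}w$. Then $w=c\,X_{\alpha}^{\op{alg}}X_{-\alpha}^{\op{alg}}w=c\,\ang{\nu+\alpha,\alpha^{\vee}}w=0$, and so $v=0$ by the first claim.

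The same weight bound shows more generally that $(X_{\alpha}^{\op{alg}})^k$ is injective on $L(\lambda)_{\nu}$ whenever $1\leq k\leq p-1$ and $\ang{\nu,\alpha^{\vee}}\leq-k$. This treats both claims at once.
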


\begin{proof}
    Let $M$ denote the Levi subgroup $TU_{\alpha}U_{-\alpha}\subseteq G$. By d\'evissage, it suffices to show that for every irreducible constituent $W$ of the restriction $L(\lambda)|_{M}$, the map $X_{\alpha}^{\op{alg}}:W_{\nu}\into W_{\nu+\alpha}$ is injective. If $W_{\nu}=0$, then there is nothing to prove. If $W_{\nu}\neq0$, then we let $\mu\in X(T)$ be the highest weight of $W$ and write $\mu=\nu+i\alpha$ for $i\geq0$. Since $\ang{\nu,\alpha^{\vee}}<0$, we cannot have $i=0$. Since $L(\lambda)_{\mu}\supseteq W_{\mu}\neq0$, we cannot have $i\geq p$. In particular, $i$ is not a multiple of $p$. Then by the proof of the claim in the proof of \cite[Prop.~3.1.2]{HLM17}, we conclude that the map $X_{\alpha}^{\op{alg}}:W_{\nu}\to W_{\nu+\alpha}$ is injective.

    If moreover $\ang{\nu,\alpha^{\vee}}<-1$, then we also cannot have $i=1$, hence $i\not\equiv0,1\,(\mod\,p)$. Then the argument of the proof of the claim in the proof of \cite[Prop.~3.1.2]{HLM17} shows that the map $\bigbra{X_{\alpha}^{\op{alg}}}^2:L(\lambda)_{\nu}\to L(\lambda)_{\nu+2\alpha}$ is also injective.
\end{proof}

\begin{lemma}\label{K-ext Lem L=V}
    Let $\lambda\in X_1(T)$, $\alpha\in\set{\alpha_{12},\alpha_{23}}$ and $\mu\in\set{\lambda-\alpha,\lambda-\ang{\lambda,\alpha^{\vee}}\alpha,\lambda-\ang{\lambda,\alpha^{\vee}}\alpha-\alpha_{13},\lambda-\ang{\lambda,\alpha^{\vee}}\alpha-\ang{\lambda,(\alpha_{13}-\alpha)^{\vee}}\alpha_{13}}$. Then we have $L(\lambda)_{\mu}=V(\lambda)_{\mu}$.
\end{lemma}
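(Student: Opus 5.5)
The plan is to reduce the statement to a bound on the weights of a single Weyl module. I would use \cite[Prop.~3.18]{Her09}, recalled in \S\ref{K-ext Sec Tensor}. If $\lambda\in C(1)\cup C(1|2)\cup C(2|3)$, then $V(\lambda)=L(\lambda)$ and there is nothing to prove. So assume $\lambda\in C(2)$. Then $[V(\lambda)]=[L(\lambda)]+[V(s_2(\lambda))]$ in the Grothendieck group. Moreover $s_2(\lambda)\in X_1(T)$ has $(s_2\lambda)_1-(s_2\lambda)_3=2p-4-(\lambda_1-\lambda_3)\leq p-3$, so $s_2(\lambda)\in C(1)$ and $V(s_2(\lambda))=L(s_2(\lambda))$. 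Since $L(\lambda)$ is a quotient of $V(\lambda)$, weight spaces satisfy $\dim V(\lambda)_\mu=\dim L(\lambda)_\mu+\dim V(s_2(\lambda))_\mu$. It therefore suffices to show $V(s_2(\lambda))_\mu=0$ for each of the four listed $\mu$.

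To do this I would use the elementary criterion already noted in \S\ref{K-ext sec operator Fp}: if $L(\nu)_{\mu}\neq0$, then every coordinate of $\mu$ lies in $[\nu_3,\nu_1]$. Put $a\eqdef\lambda_1-\lambda_2$ and $b\eqdef\lambda_2-\lambda_3$, so $a,b\leq p-2$ and $a+b\geq p-1$. Then
\begin{equation*}
s_2(\lambda)=(\lambda_3+p-2,\lambda_2,\lambda_1-p+2),
\end{equation*}
and the weights of $V(s_2(\lambda))$ have all coordinates in $[\lambda_1-p+2,\lambda_3+p-2]$. The condition $a+b\geq p-1$ gives $\lambda_1>\lambda_3+p-2$ and $\lambda_3<\lambda_1-p+2$. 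Hence any $\mu$ having a coordinate equal to $\lambda_1$ or to $\lambda_3$ is excluded.

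Now I would check the four weights for $\alpha=\alpha_{12}$. First, $\mu=\lambda-\alpha_{12}=(\lambda_1-1,\lambda_2+1,\lambda_3)$ contains $\lambda_3$. Second, $\mu=\lambda-a\alpha_{12}=(\lambda_2,\lambda_1,\lambda_3)$ contains both $\lambda_1$ and $\lambda_3$. Third, $\mu=(\lambda_2-1,\lambda_1,\lambda_3+1)$ contains $\lambda_1$. Fourth, since $\alpha_{13}-\alpha_{12}=\alpha_{23}$ and $\ang{\lambda,\alpha_{23}^\vee}=b$, we get $\mu=(\lambda_2,\lambda_1,\lambda_3)-b\alpha_{13}=(\lambda_3,\lambda_1,\lambda_2)$, which contains $\lambda_1$.

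The case $\alpha=\alpha_{23}$ is symmetric. One can either redo the same four coordinate checks, or apply the duality $\lambda\mapsto-w_0\lambda$, which swaps $\alpha_{12}$ and $\alpha_{23}$ and preserves $C(2)$ and the relation between $V$ and $L$.

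The only point needing care is the dimension identity. Weight-space dimensions are additive in short exact sequences, so they depend only on the class in the Grothendieck group, and the identity follows. The essential input is the strict inequality $a+b\geq p-1$, which pushes $\lambda_1$ and $\lambda_3$ outside the coordinate window of $s_2(\lambda)$. The boundary case $a+b=p-1$ still works because $\lambda_1-\lambda_3=p-1>p-2$.
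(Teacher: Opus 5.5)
Your proposal is correct and follows the paper's proof. The paper also disposes of $\lambda\notin C(2)$ via $L(\lambda)=V(\lambda)$ and, for $\lambda\in C(2)$, uses $[V(\lambda)]=[L(\lambda)]+[L(s_2(\lambda))]$ to reduce to $L(s_2(\lambda))_{\mu}=0$. That vanishing holds because each listed $\mu$ has a coordinate equal to $\lambda_1$ or $\lambda_3$, which lies outside the coordinate window $[\lambda_1-p+2,\lambda_3+p-2]$ of $s_2(\lambda)$.
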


\begin{proof}
    If $\lambda\notin C(2)$, then the result is immediate since $L(\lambda)=V(\lambda)$. If $\lambda\in C(2)$, then we have $\lambda_1-\lambda_3\geq p-1$. Using the relation $[L(\lambda)]=[V(\lambda)]-[L(s_2(\lambda))]$ (see \eqref{K-ext Eq reflections} for $s_2$) in the Grothendieck group of $G$-modules, it suffices to show that $L(s_2(\lambda))_{\mu}=0$. Indeed, a case-by-case examination shows that there exists $i\in\set{1,2,3}$ such that $\mu_i=\lambda_1$ or $\lambda_3$. However, we have $(s_2(\lambda))_1=\lambda_3+p-2\leq\lambda_1-1$ and $(s_2(\lambda))_3=\lambda_1-p+2\geq\lambda_3+1$ since $\lambda_1-\lambda_3\geq p-1$, hence we must have $L(s_2(\lambda))_{\mu}=0$.
\end{proof}

\begin{lemma}\label{K-ext Lem nonzero alpha13}
    Let $\lambda\in X_1(T)$ and $\alpha\in\set{\alpha_{12},\alpha_{23}}$. Then we have in $L(\lambda)$
    \begin{equation*}
        \bigbra{X_{31}^{\op{alg}}}^{\ang{\lambda,\bra{\alpha_{13}-\alpha}^{\vee}}}\bigbra{X_{-\alpha}^{\op{alg}}}^{\ang{\lambda,\alpha^{\vee}}}v_{\lambda}\neq0.
    \end{equation*}
\end{lemma}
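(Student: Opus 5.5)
The approach is to reduce the statement to two rank-one computations, each taking place inside an extremal weight string of $L(\lambda)$, linked by a Weyl group element. By symmetry (or by repeating the argument verbatim) I treat $\alpha=\alpha_{12}$. Put $a\eqdef\ang{\lambda,\alpha_{12}^{\vee}}=\lambda_1-\lambda_2$ and $b\eqdef\ang{\lambda,\alpha_{23}^{\vee}}=\lambda_2-\lambda_3$; note that $\alpha_{13}-\alpha_{12}=\alpha_{23}$. Both $a$ and $b$ lie in $[0,p-1]$ because $\lambda\in X_1(T)$.

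\textbf{Step 1.} First show $v'\eqdef\bigbra{X_{-\alpha_{12}}^{\op{alg}}}^{a}v_\lambda\neq0$. It has weight $\lambda-a\alpha_{12}=(\lambda_2,\lambda_1,\lambda_3)=s_{12}\lambda$, an extremal weight, so $L(\lambda)_{s_{12}\lambda}$ is one-dimensional. Restrict to the Levi $M=TU_{\alpha_{12}}U_{-\alpha_{12}}$. The vector $v_\lambda$ is a highest weight vector for $M$ of $\alpha_{12}$-weight $a\le p-1$, so it generates a quotient of the $M$-Weyl module, which is the simple module of dimension $a+1$. Inside this $\SL_2$-type module, the lowering operator applied $a<p$ times to the highest weight vector is nonzero, since $X_{-\alpha,i}^{\op{alg}}=\bigbra{X_{-\alpha}^{\op{alg}}}^i/i!$ for $i<p$ and the $i$-th divided power sends the highest weight vector to a basis vector. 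Equivalently, one can apply Lemma \ref{K-ext Lem Xalg injective} to the root $-\alpha_{12}$, after conjugating by $w_0$ if one wants the statement for positive roots.

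\textbf{Step 2.} Next show $\bigbra{X_{31}^{\op{alg}}}^{b}v'\neq0$. Choose a representative $\dot s\in N_G(T)(\Fp)$ of $s_{12}$. Then $\dot s v_\lambda$ spans $L(\lambda)_{s_{12}\lambda}$, so $v'=c\,\dot s v_\lambda$ with $c\neq0$. Since $s_{12}(-\alpha_{23})=-\alpha_{13}$, conjugation by $\dot s$ carries $U_{-\alpha_{23}}$ to $U_{-\alpha_{13}}$, and it carries $X_{-\alpha_{23}}^{\op{alg}}$ to $\pm X_{31}^{\op{alg}}$. This gives
\begin{equation*}
\bigbra{X_{31}^{\op{alg}}}^{b}v'=\pm c\,\dot s\,\bigbra{X_{-\alpha_{23}}^{\op{alg}}}^{b}v_\lambda .
\end{equation*}
The right-hand side is nonzero by the same rank-one argument as in Step 1, now for the Levi attached to $\alpha_{23}$ and using $b\le p-1$. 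As a sanity check, the weight of the result is $(\lambda_3,\lambda_1,\lambda_2)$, which is again extremal.

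\textbf{Case $\alpha=\alpha_{23}$.} Here the roles swap: $a=\lambda_2-\lambda_3$, then $b=\ang{\lambda,\alpha_{12}^{\vee}}$. Conjugation by $s_{23}$ sends $-\alpha_{12}$ to $-\alpha_{13}$, and the argument is identical.

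The only delicate step is the sign and normalisation bookkeeping in the conjugation $\dot s X_{-\alpha_{23}}^{\op{alg}}\dot s^{-1}=\pm X_{31}^{\op{alg}}$, which is standard (\cite[II.1.4]{Jan03}). If one prefers to avoid it, one can instead argue directly with Lemma \ref{K-ext Lem Xalg injective}. Work with $w_0$-conjugates: acting with $w_0$ turns the lowering operators into raising operators on the lowest weight vector. At each stage the relevant weight $\nu$ has $\ang{\nu,\alpha^\vee}=-a$ (respectively $-b$), and the string beyond length $p$ leaves $L(\lambda)$ because $L(\lambda)_\nu\neq0$ forces $\lambda_3\le\nu_i\le\lambda_1$. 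Applying the lemma's injectivity repeatedly along the string then gives the same nonvanishing.
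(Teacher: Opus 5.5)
Your proof is correct, but it takes a different route from the paper. The paper cites \cite[Lemma~2.4.12]{Enn18}, which is an explicit nonvanishing computation in the Weyl module $V(\lambda)$. It then transfers the result to $L(\lambda)$ via Lemma~\ref{K-ext Lem L=V}, which is needed because $L(\lambda)\neq V(\lambda)$ when $\lambda\in C(2)$.

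You instead observe that both the intermediate weight $s_\alpha\lambda$ and the final weight $s_\alpha s_{\alpha_{13}-\alpha}\lambda$ are extremal. Their weight spaces are therefore one-dimensional and spanned by Weyl translates of $v_\lambda$. Conjugating by a permutation matrix then reduces everything to two rank-one statements $X_{-\beta}^{(k)}v_\lambda\neq0$ with $k=\langle\lambda,\beta^\vee\rangle\le p-1$. (In $\GL_3$ the unsigned permutation matrix $P$ for $(12)$ satisfies $Pu_{32}(t)P^{-1}=u_{31}(t)$ exactly, so the sign bookkeeping you worry about disappears.)

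Your argument is self-contained: it needs neither the external lemma nor Lemma~\ref{K-ext Lem L=V}, and it does not care whether $L(\lambda)=V(\lambda)$. The paper's route buys uniformity. The same citation is reused in Lemmas~\ref{K-ext Lem nonzero alpha12and23} and \ref{K-ext Lem nonzero alpha=0}, where the two-dimensional, non-extremal weight space $L(\lambda)_{\lambda-\alpha_{13}}$ appears and your Weyl-group trick is not available.

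One caveat: drop the final paragraph, and the ``Equivalently'' sentence in Step~1. After conjugating by $w_0$, the pairing with the coroot along the string runs through $-a,-a+2,\dots,a-2$; it is not constantly $-a$. Lemma~\ref{K-ext Lem Xalg injective} requires a negative pairing, so it only covers roughly the first half of the steps. Making it work beyond that would require extra input, such as knowing that the whole extremal string lies in a single simple constituent for the Levi. Your rank-one argument via the simple $\GL_2$-module of highest weight $a\le p-1$ is what actually carries the proof, and it suffices on its own.
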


\begin{proof}
    This follows from \cite[Lemma~2.4.12]{Enn18} together with Lemma \ref{K-ext Lem L=V}.
\end{proof}

\begin{lemma}\label{K-ext Lem nonzero alpha12and23}
    Let $\lambda\in X_1(T)$, $\alpha\in\set{\alpha_{12},\alpha_{23}}$ and $0\neq w\in L(\lambda)_{\lambda-\alpha}$. Then we have in $L(\lambda)$
    \begin{equation*}
        \bigbra{X_{31}^{\op{alg}}}^{\ang{\lambda,\bra{\alpha_{13}-\alpha}^{\vee}}}\bigbra{X_{-\alpha}^{\op{alg}}}^{\ang{\lambda,\alpha^{\vee}}-1}w\neq0.
    \end{equation*}
\end{lemma}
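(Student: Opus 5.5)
The plan is to reduce to Lemma \ref{K-ext Lem nonzero alpha13} by identifying the line $L(\lambda)_{\lambda-\alpha}$. First suppose $\ang{\lambda,\alpha^{\vee}}\geq1$; otherwise $\lambda-\alpha$ is not a weight of $L(\lambda)$ and there is nothing to prove. The weight $\lambda-\alpha$ lies in the $W$-orbit of $\lambda$ only through the simple reflection, so $V(\lambda)_{\lambda-\alpha}$ is $1$-dimensional, spanned by $X_{-\alpha}^{\op{alg}}v_{\lambda}$. By Lemma \ref{K-ext Lem L=V} (with $\mu=\lambda-\alpha$) we have $L(\lambda)_{\lambda-\alpha}=V(\lambda)_{\lambda-\alpha}$. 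Hence $L(\lambda)_{\lambda-\alpha}$ is $1$-dimensional as well. Moreover $X_{-\alpha}^{\op{alg}}v_{\lambda}\neq0$, since $0\leq\ang{\lambda,\alpha^{\vee}}-1\leq p-2$ and the $\SL_2$-string through $v_\lambda$ in direction $\alpha$ has length $\ang{\lambda,\alpha^\vee}+1\leq p$ in both $V(\lambda)$ and $L(\lambda)$. Alternatively, the nonvanishing of $X_{-\alpha}^{\op{alg}}v_{\lambda}$ follows directly from Lemma \ref{K-ext Lem nonzero alpha13}, because the nonzero vector there factors through it.

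It then follows that $w=c\,X_{-\alpha}^{\op{alg}}v_{\lambda}$ for some $c\in\FF^{\times}$. Substituting, we get
\begin{equation*}
\bigbra{X_{31}^{\op{alg}}}^{\ang{\lambda,(\alpha_{13}-\alpha)^{\vee}}}\bigbra{X_{-\alpha}^{\op{alg}}}^{\ang{\lambda,\alpha^{\vee}}-1}w=c\,\bigbra{X_{31}^{\op{alg}}}^{\ang{\lambda,(\alpha_{13}-\alpha)^{\vee}}}\bigbra{X_{-\alpha}^{\op{alg}}}^{\ang{\lambda,\alpha^{\vee}}}v_{\lambda}.
\end{equation*}
The right-hand side is nonzero by Lemma \ref{K-ext Lem nonzero alpha13}, which proves the statement.

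The only genuine point is the one-dimensionality of $L(\lambda)_{\lambda-\alpha}$. For $\lambda\notin C(2)$ it is immediate from $L(\lambda)=V(\lambda)$. For $\lambda\in C(2)$ it comes from Lemma \ref{K-ext Lem L=V}, and I would double-check that $\mu=\lambda-\alpha$ is indeed among the weights allowed there, which it is. The edge case $\ang{\lambda,\alpha^{\vee}}=0$ is vacuous, since then $L(\lambda)_{\lambda-\alpha}=0$ and no nonzero $w$ exists.
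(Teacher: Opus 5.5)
Your proof is correct and is the paper's argument: from $\ang{\lambda,\alpha^{\vee}}\geq1$ you get that $w$ is a nonzero multiple of $X_{-\alpha}^{\op{alg}}v_{\lambda}$ (the paper cites \cite[Lemma~2.4.12]{Enn18} together with Lemma~\ref{K-ext Lem L=V}), and Lemma~\ref{K-ext Lem nonzero alpha13} then finishes. One justification needs fixing: $\lambda-\alpha$ is usually not in the $W$-orbit of $\lambda$. The line $V(\lambda)_{\lambda-\alpha}$ is spanned by $X_{-\alpha}^{\op{alg}}v_{\lambda}$ for a different reason: $V(\lambda)$ is generated by $v_{\lambda}$ under the negative root operators, and since $\alpha$ is simple, the only divided-power monomial of weight $-\alpha$ is $X_{-\alpha}^{\op{alg}}$. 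Because $L(\lambda)$ is a quotient of $V(\lambda)$, this already shows that $L(\lambda)_{\lambda-\alpha}$ is spanned by $X_{-\alpha}^{\op{alg}}v_{\lambda}$, without needing Lemma~\ref{K-ext Lem L=V}.
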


\begin{proof}
    Since $0\neq w\in L(\lambda)_{\lambda-\alpha}$, we have $\ang{\lambda,\alpha^{\vee}}\geq1$. Then by \cite[Lemma~2.4.12]{Enn18} together with Lemma \ref{K-ext Lem L=V}, $w$ must be a scalar multiple of $X_{-\alpha}^{\op{alg}}v_{\lambda}$. Hence we conclude by Lemma \ref{K-ext Lem nonzero alpha13}.
\end{proof}

\begin{lemma}\label{K-ext Lem nonzero alpha=0}
    Let $\lambda\in X_1(T)$, $\alpha\in\set{\alpha_{12},\alpha_{23}}$ and $w\in L(\lambda)_{\lambda-\alpha_{13}}$. Suppose that $X_{\alpha}^{\op{alg}}w\neq0$. Then we have in $L(\lambda)$
    \begin{equation*}
        \bigbra{X_{31}^{\op{alg}}}^{\ang{\lambda,\bra{\alpha_{13}-\alpha}^{\vee}}-1}\bigbra{X_{-\alpha}^{\op{alg}}}^{\ang{\lambda,\alpha^{\vee}}}w\neq0.
    \end{equation*}
\end{lemma}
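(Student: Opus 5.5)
Take $\alpha=\alpha_{12}$; the case $\alpha_{23}$ is symmetric. Set $a\eqdef\ang{\lambda,\alpha_{12}^{\vee}}$ and $b\eqdef\ang{\lambda,\alpha_{23}^{\vee}}$, so that $\ang{\lambda,(\alpha_{13}-\alpha)^{\vee}}=b$. The plan is to reduce to Lemma \ref{K-ext Lem nonzero alpha12and23}.

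\textbf{Step 1: the weight vector $X_{\alpha}^{\op{alg}}w$.} Since $\alpha_{13}-\alpha_{12}=\alpha_{23}$, the vector $w'\eqdef X_{12}^{\op{alg}}w$ is a nonzero element of $L(\lambda)_{\lambda-\alpha_{23}}$. In particular $b\geq1$, so the exponent $b-1$ in the statement is nonnegative.

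\textbf{Step 2: locate $w$ inside $L(\lambda)_{\lambda-\alpha_{13}}$.} By Lemma \ref{K-ext Lem L=V}, \cite[Lemma~2.4.12]{Enn18} and the PBW basis of $V(\lambda)$, this weight space is spanned by
\begin{equation*}
    X_{31}^{\op{alg}}v_\lambda,\qquad X_{21}^{\op{alg}}X_{32}^{\op{alg}}v_\lambda .
\end{equation*}
(One should double-check that Lemma \ref{K-ext Lem L=V} covers $\mu=\lambda-\alpha_{13}$, or argue directly that $L(s_2(\lambda))_{\lambda-\alpha_{13}}=0$ from the bounds on coordinates.) So write $w=c_1X_{31}^{\op{alg}}v_\lambda+c_2X_{21}^{\op{alg}}X_{32}^{\op{alg}}v_\lambda$.

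\textbf{Step 3: reorder the operators to reach the form of Lemma \ref{K-ext Lem nonzero alpha12and23}.} Apply $Y\eqdef (X_{31}^{\op{alg}})^{b-1}(X_{21}^{\op{alg}})^{a}$ to $w$. The operators $X_{31}^{\op{alg}}$ and $X_{21}^{\op{alg}}$ commute, because $\alpha_{31}+\alpha_{21}$ is not a root. One then commutes $X_{21}^{\op{alg}}$ past $X_{32}^{\op{alg}}$, using $[X_{21},X_{32}]=\pm X_{31}$. This yields
\begin{equation*}
    Yw=(\text{nonzero scalar})\cdot (X_{31}^{\op{alg}})^{b}(X_{21}^{\op{alg}})^{a}v_\lambda+(\text{terms with } (X_{21}^{\op{alg}})^{a+1}v_\lambda).
\end{equation*}
The extra terms vanish because $(X_{21}^{\op{alg}})^{a+1}v_\lambda=0$, by $\SL_2$-theory and $a\leq p-1$. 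Since $X_{12}^{\op{alg}}w\neq0$, the scalar is nonzero, and then Lemma \ref{K-ext Lem nonzero alpha13} finishes the argument.

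\textbf{Alternative for Step 3.} A cleaner route is to compare $Yw$ with $(X_{31}^{\op{alg}})^{b-1}(X_{21}^{\op{alg}})^{a-1}w'$ via the commutation $X_{21}^{\op{alg}}X_{12}^{\op{alg}}=X_{12}^{\op{alg}}X_{21}^{\op{alg}}-H_{\alpha_{12}}$. The $H$-term acts on a weight space by a scalar, while the $X_{12}^{\op{alg}}$-term raises back toward a weight that is killed, by a weight argument. Then Lemma \ref{K-ext Lem nonzero alpha12and23} applied to $w'$ gives the conclusion.

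\textbf{Main obstacle.} The hard part is the bookkeeping of scalars mod $p$ in these commutations. The coefficients involve factors like $a$, $a-1$, $\ldots$, and products such as $b\cdot c$ that must be shown nonzero in $\FF$, using that the relevant pairings lie in $[1,p-1]$. I would first try the direct PBW computation in $V(\lambda)$ through Lemma \ref{K-ext Lem L=V}. If the scalars become unwieldy, I would instead treat the two basis vectors separately and reduce to the condition $X_{12}^{\op{alg}}w\neq0$ as a linear condition on $(c_1,c_2)$.
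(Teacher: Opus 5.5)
Your main route (Steps 1--3) is essentially the paper's proof. You expand $w$ in a two-element spanning set of $L(\lambda)_{\lambda-\alpha_{13}}$, push $(X_{21}^{\op{alg}})^{a}$ through using $[X_{21}^{\op{alg}},X_{32}^{\op{alg}}]=-X_{31}^{\op{alg}}$ and $(X_{21}^{\op{alg}})^{a+1}v_\lambda=0$, observe that the surviving coefficient is the same linear form that controls $X_{12}^{\op{alg}}w$, and finish with Lemma \ref{K-ext Lem nonzero alpha13}. Concretely, with your basis one gets $(X_{21}^{\op{alg}})^{a}w=(c_1-(a+1)c_2)\,X_{31}^{\op{alg}}(X_{21}^{\op{alg}})^{a}v_\lambda$ and $X_{12}^{\op{alg}}w=-(c_1-(a+1)c_2)\,X_{32}^{\op{alg}}v_\lambda$. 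The paper uses the basis $X_{21}^{\op{alg}}X_{32}^{\op{alg}}v_\lambda,\ X_{32}^{\op{alg}}X_{21}^{\op{alg}}v_\lambda$ instead, which is a cosmetic difference.

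Drop the ``alternative'' Step 3, however. The vector $(X_{31}^{\op{alg}})^{b-1}(X_{21}^{\op{alg}})^{a-1}X_{12}^{\op{alg}}w$ has a weight differing from that of $Yw$ by $2\alpha_{12}$. Moreover, Lemma \ref{K-ext Lem nonzero alpha12and23} applied to $w'\in L(\lambda)_{\lambda-\alpha_{23}}$ is a statement about $(X_{31}^{\op{alg}})^{a}(X_{32}^{\op{alg}})^{b-1}w'$, not about powers of $X_{21}^{\op{alg}}$. So that comparison does not go through as written.
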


\begin{proof}
    We give a proof for $\alpha=\alpha_{12}$, the case $\alpha=\alpha_{23}$ being symmetric. 
    
    Since $0\neq X_{12}^{\op{alg}}w\in L(\lambda)_{\lambda-\alpha_{23}}$, we must have $\lambda_2-\lambda_3\geq1$. By \cite[Lemma~2.4.12]{Enn18}, we know that $L(\lambda)_{\lambda-\alpha_{13}}$ is spanned by $X_{21}^{\op{alg}}X_{32}^{\op{alg}}v_{\lambda}$ and $X_{32}^{\op{alg}}X_{21}^{\op{alg}}v_{\lambda}$. We write $w=c_1X_{21}^{\op{alg}}X_{32}^{\op{alg}}v_{\lambda}+c_2X_{32}^{\op{alg}}X_{21}^{\op{alg}}v_{\lambda}$ for $c_1,c_2\in\FF$. 

    Since $X_{12}^{\op{alg}}v_{\lambda}=0$ and $X_{12}^{\op{alg}}X_{32}^{\op{alg}}v_{\lambda}=0$ by weight considerations, using \cite[Lemma~2.4.3]{Enn18} we compute that 
    \begin{equation*}
    \begin{aligned}
        X_{12}^{\op{alg}}X_{21}^{\op{alg}}X_{32}^{\op{alg}}v_{\lambda}&=\ang{\lambda+\alpha_{32},\alpha_{12}^{\vee}}X_{32}^{\op{alg}}v_{\lambda}=(\lambda_1-\lambda_2+1)X_{32}^{\op{alg}}v_{\lambda};\\
        X_{12}^{\op{alg}}X_{32}^{\op{alg}}X_{21}^{\op{alg}}v_{\lambda}&=X_{32}^{\op{alg}}X_{12}^{\op{alg}}X_{21}^{\op{alg}}v_{\lambda}=X_{32}^{\op{alg}}\ang{\lambda,\alpha_{12}^{\vee}}v_{\lambda}=(\lambda_1-\lambda_2)X_{32}^{\op{alg}}v_{\lambda}.
    \end{aligned}
    \end{equation*}
    Hence we deduce from $X_{12}^{\op{alg}}w\neq0$ that $c_1(\lambda_1-\lambda_2+1)+ c_2(\lambda_1-\lambda_2)\neq0$.
    
    Since $[X_{21}^{\op{alg}},X_{32}^{\op{alg}}]=-X_{31}^{\op{alg}}$ which commutes with $X_{21}^{\op{alg}}$ and $X_{32}^{\op{alg}}$, by \cite[Lemma~2.4.2]{Enn18} we have for $m\geq1$
    \begin{equation}\label{K-ext Eq Lem nonzero alpha=0 2}
        \bigbra{X_{21}^{\op{alg}}}^{m}X_{32}^{\op{alg}}=X_{32}^{\op{alg}}\bigbra{X_{21}^{\op{alg}}}^{m}-mX_{31}^{\op{alg}}\bigbra{X_{21}^{\op{alg}}}^{m-1}.
    \end{equation}
    Moreover, we have $\bra{X_{12}^{\op{alg}}}^{\lambda_1-\lambda_2+1}v_{\lambda}=0$ by weight considerations. Hence by \eqref{K-ext Eq Lem nonzero alpha=0 2} we have
    \begin{equation}\label{K-ext Eq Lem nonzero alpha=0 1}
    \begin{aligned}
        \bigbra{X_{21}^{\op{alg}}}^{\lambda_1-\lambda_2}w&=c_1\bigbra{X_{21}^{\op{alg}}}^{\lambda_1-\lambda_2+1}X_{32}^{\op{alg}}v_{\lambda}+c_2\bigbra{X_{21}^{\op{alg}}}^{\lambda_1-\lambda_2}X_{32}^{\op{alg}}X_{21}^{\op{alg}}v_{\lambda}\\
        &=-c_1(\lambda_1-\lambda_2+1)X_{31}^{\op{alg}}\bigbra{X_{21}^{\op{alg}}}^{\lambda_1-\lambda_2}v_{\lambda}-c_2(\lambda_1-\lambda_2)X_{31}^{\op{alg}}\bigbra{X_{21}^{\op{alg}}}^{\lambda_1-\lambda_2}v_{\lambda}\\
        &=-\bigbra{c_1(\lambda_1-\lambda_2+1)+c_2(\lambda_1-\lambda_2)}X_{31}^{\op{alg}}\bigbra{X_{21}^{\op{alg}}}^{\lambda_1-\lambda_2}v_{\lambda}.
    \end{aligned}
    \end{equation}
    By \cite[Lemma~2.4.12]{Enn18} (exchanging the roles of $\alpha$ and $\beta$) together with Lemma \ref{K-ext Lem L=V} we have $\bigbra{X_{31}^{\op{alg}}}^{\lambda_2-\lambda_3}\bigbra{X_{21}^{\op{alg}}}^{\lambda_1-\lambda_2}v_{\lambda}\neq0$. Then multiplying $\bigbra{X_{31}^{\op{alg}}}^{\lambda_2-\lambda_3-1}$ on both sides of  \eqref{K-ext Eq Lem nonzero alpha=0 1} we conclude that $\bigbra{X_{31}^{\op{alg}}}^{\lambda_2-\lambda_3-1}\bra{X_{21}^{\op{alg}}}^{\lambda_1-\lambda_2}w\neq0$.
\end{proof}

\begin{lemma}\label{K-ext Lem Y isomorphism}
    Let $\alpha\in\Phi^+\cup\set{0}$. Let $\lambda\in C(\alpha)^+$ (see Remark \ref{K-ext Rk Calpha+}) such that $\lambda_1-\lambda_3\neq p-2$ if $\alpha\neq\alpha_{13}$ and $\lambda_1-\lambda_2,\lambda_2-\lambda_3\leq p-2$ if $\alpha=0$. Let $\nu=\lambda+\alpha-(p-1)\alpha_{13}$. Then the map $X_{13}^{\op{alg}}:L(\lambda)_{\nu}\to L(\lambda)_{\nu+\alpha_{13}}$ is an isomorphism.
\end{lemma}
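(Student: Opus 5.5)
The plan is to reduce the statement to the $\SL_2$-theory along the root $\alpha_{13}$, via the Levi subgroup $M=TU_{\alpha_{13}}U_{-\alpha_{13}}$, as in the proof of Lemma \ref{K-ext Lem Xalg injective}. By dévissage along a composition series of $L(\lambda)|_M$, it suffices to show two things. First, for every irreducible $M$-constituent $W$ with $W_\nu\neq0$ or $W_{\nu+\alpha_{13}}\neq0$, the map $X_{13}^{\op{alg}}:W_\nu\to W_{\nu+\alpha_{13}}$ is an isomorphism. Second, $\dim L(\lambda)_\nu=\dim L(\lambda)_{\nu+\alpha_{13}}$; this follows from the Weyl-group symmetry $s_{\alpha_{13}}$ provided $\nu$ and $\nu+\alpha_{13}$ are swapped by it, and otherwise it has to be argued directly. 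Indeed $\ang{\nu,\alpha_{13}^\vee}=\ang{\lambda+\alpha,\alpha_{13}^\vee}-2(p-1)$. In the relevant cases, for instance $\lambda_1-\lambda_3+\ang{\alpha,\alpha_{13}^\vee}=2p-3$ for $\alpha=\alpha_{13}$ extremal, this gives $\ang{\nu,\alpha_{13}^\vee}=-1$, so the two weights are swapped. In general they are not swapped, so I will rely mainly on the constituent-wise argument, and I would organize the proof around it rather than around the dimension count.

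For an irreducible $M$-module $W$ with highest weight $\mu=\nu+i\alpha_{13}$, the $\SL_2$-theory (the claim in the proof of \cite[Prop.~3.1.2]{HLM17}) gives the following. Writing $n=\ang{\mu,\alpha_{13}^\vee}$ with $W\cong L_{\SL_2}(n)$ up to a central twist, $X_{13}^{\op{alg}}:W_{\mu-i\alpha_{13}}\to W_{\mu-(i-1)\alpha_{13}}$ is nonzero exactly when $i\not\equiv 0\pmod p$ and both weights occur in $W$. When $n\leq p-1$ the relevant condition is $1\leq i\leq n$. So I need two bounds. The first is that every weight $\mu=\nu+i\alpha_{13}$ of $L(\lambda)$ with $i\geq0$ satisfies $i\leq p-1$, which gives injectivity. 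The second is that no constituent has highest weight exactly $\nu+\alpha_{13}$ with $\nu$ not in it, i.e.\ no $W$ with $i=0$ relative to $\nu+\alpha_{13}$, which gives surjectivity. The first bound says $L(\lambda)_{\nu+p\alpha_{13}}=0$, i.e.\ $\lambda+\alpha+\alpha_{13}\nleq\lambda$ or a coordinate bound fails. This is immediate, since $\lambda+\alpha+\alpha_{13}>\lambda$ for $\alpha\in\Phi^+\cup\set{0}$. So injectivity follows directly from Lemma \ref{K-ext Lem Xalg injective}, once I check $\ang{\nu,\alpha_{13}^\vee}<0$. This amounts to $\lambda_1-\lambda_3+\ang{\alpha,\alpha_{13}^\vee}\leq 2p-3$, which holds since $\lambda\in C(\alpha)^+$ forces $\lambda_1-\lambda_3\leq 2p-4$, with the exceptional $C(\alpha)^+$ elements checked by hand.

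For surjectivity, I would use the dual statement. Any $w\in L(\lambda)_{\nu+\alpha_{13}}$ killed by $X_{31}^{\op{alg}}$-adjointness problems would be lowest weight vectors of constituents. Concretely, I will show that $X_{31}^{\op{alg}}:L(\lambda)_{\nu+\alpha_{13}}\to L(\lambda)_\nu$ is injective, by applying Lemma \ref{K-ext Lem Xalg injective} to $-\alpha_{13}$ via the $w_0$-twisted (dual) module $L(\lambda)^*\cong L(-w_0\lambda)$. This needs $\ang{\nu+\alpha_{13},\alpha_{13}^\vee}>0$ and $L(\lambda)_{\nu+\alpha_{13}-i\alpha_{13}}=0$ for $i\geq p$. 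Then $\dim L(\lambda)_\nu=\dim L(\lambda)_{\nu+\alpha_{13}}$, and the injective map $X_{13}^{\op{alg}}$ is an isomorphism.

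The main obstacle is this vanishing, namely $L(\lambda)_{\lambda+\alpha-(2p-2)\alpha_{13}-j\alpha_{13}}=0$ for $j\geq0$ together with $\ang{\nu,\alpha_{13}^\vee}\geq -1$. This is exactly where the hypotheses defining $C(\alpha)^+$ enter: the conditions $\lambda_1-\lambda_3\neq p-2$ and $\lambda_1-\lambda_2,\lambda_2-\lambda_3\leq p-2$ for $\alpha=0$, and the lower bounds $\lambda_1-\lambda_3\geq p-1$ in the upper range. The approach is to use the coordinate bound $\lambda_3\leq\mu_i\leq\lambda_1$. The weight $\nu-p\alpha_{13}$ has first coordinate $\lambda_1+\alpha^{(1)}-(2p-1)$, and this must be shown to be $<\lambda_3$. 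In the lower range $\lambda_1-\lambda_3\leq p-3$, the weight $\nu$ itself is already outside the weights unless $\alpha$ compensates, and there both spaces should vanish trivially; in that case I would check directly that both weight spaces are zero. For $\lambda\in C(2)$ one should use Lemma \ref{K-ext Lem L=V}-type comparisons with $V(\lambda)$ and $[L(\lambda)]=[V(\lambda)]-[V(s_2(\lambda))]$ to compute the dimensions of the two weight spaces and compare them. I would carry this out case by case over $\alpha\in\set{0,\alpha_{12},\alpha_{23},\alpha_{13}}$.
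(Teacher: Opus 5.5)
\textbf{Main gap: surjectivity.} The genuine gap is the surjectivity step, equivalently the dimension equality. You want $X_{31}^{\op{alg}}:L(\lambda)_{\nu+\alpha_{13}}\to L(\lambda)_{\nu}$ to be injective by Lemma~\ref{K-ext Lem Xalg injective} applied to $-\alpha_{13}$. That requires $\langle\nu+\alpha_{13},\alpha_{13}^{\vee}\rangle>0$, i.e.\ $\langle\nu,\alpha_{13}^{\vee}\rangle\geq-1$. Your injectivity step for $X_{13}^{\op{alg}}$ needs $\langle\nu,\alpha_{13}^{\vee}\rangle<0$, so together these force $\langle\nu,\alpha_{13}^{\vee}\rangle=-1$. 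That is exactly the case where $s_{\alpha_{13}}$ swaps $\nu$ and $\nu+\alpha_{13}$, and the dimension equality is automatic anyway.

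The hypotheses on $C(\alpha)^+$ do not give this. Since $\langle\nu,\alpha_{13}^{\vee}\rangle=\lambda_1-\lambda_3+\langle\alpha,\alpha_{13}^{\vee}\rangle-(2p-2)$, it is $\leq-2$ for every admissible $\lambda$ when $\alpha=0$. It equals $-(p-2)$ for $\alpha=\alpha_{12}$ and $\lambda|_{\SL_3}=(1,p-2)$.

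You have also misidentified the obstruction. Surjectivity fails because of an $M$-constituent with \emph{lowest} weight $\nu+\alpha_{13}$, not one with \emph{highest} weight $\nu+\alpha_{13}$. This really occurs: for $\alpha=0$ and $\lambda_1-\lambda_3=p-2$ one has $\nu+\alpha_{13}=w_0\lambda$ while $L(\lambda)_{\nu}=0$, which is why that case is excluded. Ruling it out is a statement about weight multiplicities that coordinate bounds cannot supply.

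The missing ingredient is an actual character computation. In the paper:
\begin{itemize}
\item In the lower range $\lambda_1-\lambda_3\leq p-3$, both spaces vanish by coordinate bounds.
\item For $\lambda_1-\lambda_3\geq p-1$, Kostant's multiplicity formula shows that $\dim V(\lambda)_{\nu}-\dim V(\lambda)_{\nu+\alpha_{13}}=0$ when $L(\lambda)=V(\lambda)$, e.g.\ $\lambda_2-\lambda_3=p-1$ for $\alpha=\alpha_{12}$.
\item For $\lambda\in C(2)$, the same formula shows this difference coincides with the corresponding difference for $V(s_2(\lambda))$, computed at the $w_0$-conjugate weights. Then $[L(\lambda)]=[V(\lambda)]-[V(s_2(\lambda))]$ gives equality.
\end{itemize}
Your final sentences gesture at the $C(2)$ case only, without this mechanism. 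Nothing in the proposal handles upper-range weights with $L(\lambda)=V(\lambda)$.

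\textbf{Smaller gap: injectivity.} Otherwise your injectivity step is the paper's argument: $\langle\nu,\alpha_{13}^{\vee}\rangle<0$ and $\nu+p\alpha_{13}=\lambda+\alpha+\alpha_{13}\nleq\lambda$. But for $\alpha=\alpha_{13}$, the condition $\langle\nu,\alpha_{13}^{\vee}\rangle<0$ means $\lambda_1-\lambda_3\leq 2p-5$, not $2p-4$. So it fails for $\lambda|_{\SL_3}=(p-2,p-2)$, which lies in the main part of $C(\alpha_{13})^+$, not among the exceptional elements you propose to check by hand.

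In that case $\nu=\lambda-(p-2)\alpha_{13}=s_2(\lambda)$ is central and $\langle\nu,\alpha_{13}^{\vee}\rangle=0$. Lemma~\ref{K-ext Lem Xalg injective} then cannot exclude an $M$-constituent with highest weight $\nu$ killed by $X_{13}^{\op{alg}}$. The paper treats this case separately through the dual Weyl module $W(\lambda)$. By the computation in \cite[Prop.~3.1.2]{HLM17}, the map $X_{13}^{\op{alg}}:W(\lambda)_{\nu}\to W(\lambda)_{\nu+\alpha_{13}}=L(\lambda)_{\nu+\alpha_{13}}$ is surjective with a one-dimensional kernel not contained in $\soc_G W(\lambda)=L(\lambda)$. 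This yields injectivity on $L(\lambda)_{\nu}$.
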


\begin{proof}
    First we prove that the map $X_{13}^{\op{alg}}:L(\lambda)_{\nu}\to L(\lambda)_{\nu+\alpha_{13}}$ is injective. 
    
    Suppose that $\alpha=\alpha_{13}$ and $\lambda|_{\SL_3}=(p-2,p-2)$. We write $W(\lambda)$ the dual Weyl module of highest weight $\lambda$. Then the proof of \cite[Prop.~3.1.2]{HLM17} (which still works in the case $a_2=a_1+1=a_0+2$) shows that that map $X_{13}^{\op{alg}}:W(\lambda)_{\nu}\to W(\lambda)_{\nu+\alpha_{13}}=L(\lambda)_{\nu+\alpha_{13}}$ is surjective, with a one-dimensional kernel not contained in $\soc_{G}W(\lambda)=L(\lambda)$. Therefore, the resulting map $X_{13}^{\op{alg}}:L(\lambda)_{\nu}\to L(\lambda)_{\nu+\alpha_{13}}$ is injective.

    In the remaining cases, we have $\ang{\nu,\alpha_{13}^{\vee}}=(\lambda_1-\lambda_3)+\ang{\alpha,\alpha_{13}^{\vee}}-(2p-2)<0$ and $\nu+p\alpha_{13}=\lambda+\alpha+\alpha_{13}>\lambda$. Then the injectivity follows from Lemma \ref{K-ext Lem Xalg injective}.
    \vspace{1em}

    Next we prove that $\dim L(\lambda)_{\nu}=\dim L(\lambda)_{\nu+\alpha_{13}}$. We give a proof for the case $\alpha=\alpha_{12}$, the other cases being similar. In particular, we have $\lambda_1-\lambda_2\leq p-3$.
    
    If $\lambda_1-\lambda_3\leq p-3$, then we have $\nu_3\geq(\nu+\alpha_{13})_3=\lambda_3+0+(p-2)\geq\lambda_1+1$, hence $L(\lambda)_{\nu}=L(\lambda)_{\nu+\alpha_{13}}=0$ and the result follows.
    
    From now on we suppose that $\lambda_1-\lambda_3\geq p-1$. By Kostant multiplicity formula, for $\lambda'\in X(T)_+$ and $\nu'\in X(T)$ we have
    \begin{equation*}
        \dim V(\lambda')_{\nu'}=\sum\limits_{w\in S_3}\op{sgn}(w)p\bigbra{w(\lambda'+\alpha_{13})-(\nu'+\alpha_{13})},
    \end{equation*}
    where $p$ is the partition function given by 
    \begin{equation*}
        p(\mu)=
    \begin{cases}
        \min\set{n_1,n_2}+1&\text{if}~\mu=n_1\alpha_{12}+n_2\alpha_{23}~\text{with}~n_1,n_2\in\NNN\\
        0&\text{otherwise}.
    \end{cases}
    \end{equation*}
    %Hence for $\lambda'$ we have
    %\begin{equation*}
        %\dim W(\lambda)_{\nu}-\dim W(\lambda)_{\nu+\alpha}=\sum\limits_{w\in S_3}\sgn(w)p(w,\lambda)
    %\end{equation*}
    The following table gives $p\bigbra{w(\lambda+\alpha_{13})-(\nu+\alpha_{13})}-p\bigbra{w(\lambda+\alpha_{13})-(\nu+2\alpha_{13})}$ and $p\bigbra{w(s_2(\lambda)+\alpha_{13})-(w_0\nu+\alpha_{13})}-p\bigbra{w(s_2(\lambda)+\alpha_{13})-(w_0(\nu+\alpha_{13})+\alpha_{13})}$ for $w\in S_3$.
    \begin{center}    
    \begin{tabular}{|c|c|c|c|c|c|c|}
        \hline
        %\diagbox{$\lambda'$}{$w$} 
        $w$ & $\id$ & $(12)$ & $(23)$ & $(123)$ & $(132)$ & $(13)$\\ 
        \hline
        $\bra{\lambda,\nu}$ & $1$ & $1$ & $\delta_{\lambda_2-\lambda_3\leq p-2}$ & $0$ & $0$ & $0$\\ 
        \hline
        $\bra{s_2(\lambda),w_0\nu}$ & $1$ & $1$ & $\delta_{\lambda_2-\lambda_3\leq p-2}$ & $0$ & $0$ & $0$\\ 
        \hline
    \end{tabular}
    \end{center}
    %In particular, for $\lambda'\in\set{\lambda,s_2(\lambda)}$ we have
    %\begin{equation*}
        %\dim W(\lambda)_{\nu}-\dim W(\lambda)_{\nu+\alpha}=\sum\limits_{w\in S_3}\sgn(w)\bigbra{p\bigbra{w(\lambda'+\alpha_{13})-(\nu+\alpha_{13})}-p\bigbra{w(\lambda'+\alpha_{13})-(\nu+2\alpha_{13})}}
    %\end{equation*}
    If $\lambda_2-\lambda_3=p-1$, then $\lambda\notin C(2)$, hence $L(\lambda)=V(\lambda)$. Therefore, we have
    \begin{equation*}
        \dim L(\lambda)_{\nu}-\dim L(\lambda)_{\nu+\alpha_{13}}=\dim V(\lambda)_{\nu}-\dim V(\lambda)_{\nu+\alpha_{13}}=1-1-\delta_{\lambda_2-\lambda_3\leq p-2}=0.
    \end{equation*}
    If $\lambda_2-\lambda_3\leq p-2$, then $\lambda\in C(2)$, hence $[L(\lambda)]=[V(\lambda)]-[V(s_{2}(\lambda))]$. Therefore, we have
    \begin{equation*}
    \begin{aligned}
        &\dim L(\lambda)_{\nu}-\dim L(\lambda)_{\nu+\alpha_{13}}\\
        &\hspace{3em}=\bigbra{\dim V(\lambda)_{\nu}-\dim V(\lambda)_{\nu+\alpha_{13}}}-\bigbra{\dim V(s_2(\lambda))_{\nu}-\dim V(s_2(\lambda))_{\nu+\alpha_{13}}}\\
        &\hspace{3em}=\bigbra{\dim V(\lambda)_{\nu}-\dim V(\lambda)_{\nu+\alpha_{13}}}-\bigbra{\dim V(s_2(\lambda))_{w_0\nu}-\dim V(s_2(\lambda))_{w_0(\nu+\alpha_{13})}}=0.
    \end{aligned}
    \end{equation*}
    This completes the proof.
    %\begin{equation*}
    %\begin{aligned}
        %\dim V(\lambda)_{\nu}&=p\bigbra{(p-2)\alpha_{13}}-p\bigbra{(p-2)\alpha_{13}-(a-b+1)\alpha_{12}}-p\bigbra{(p-2)\alpha_{13}-(b-c+1)\alpha_{13}}\\
        %&=(p-1)-(p-2-(a-b))-(p-2-(b-c))=a-c-(p-3).
    %\end{aligned}
    %\end{equation*}
    %\begin{equation*}
    %\begin{aligned}
        %\dim V(\lambda)_{\nu+\alpha_{13}}&=p\bigbra{(p-3)\alpha_{13}}-p\bigbra{(p-3)\alpha_{13}-(a-b+1)\alpha_{12}}-p\bigbra{(p-3)\alpha_{13}-(b-c+1)\alpha_{13}}\\
        %&=(p-2)-(p-3-(a-b))-(p-3-(b-c))
    %\end{aligned}
    %\end{equation*}
\end{proof}

\section{Group operators on Serre weights}\label{K-ext sec operator Fq}

Let $G_0\eqdef\op{Res}_{\Fq/\Fp}\GL_3$ and $\un{G}\eqdef G_0\otimes_{\Fp}\FF$. Then we have $\un{G}\cong\prod_{j=0}^{f-1}\GL_{3/\FF}$ with split maximal torus $\un{T}\cong\prod_{j=0}^{f-1}T$, the group of characters $X(\un{T})\cong X(T)^{\oplus f}$ and $p$-restricted weights $X_1(\un{T})\cong X_1(T)^{\oplus f}$. 

For $\lambda\in X(\un{T})\cong\ZZ^{3f}$, we write $\lambda=\bigbra{\lambda_0,\ldots,\lambda_{f-1}}$ with $\lambda_{j}=\bigbra{\lambda_{j,1},\lambda_{j,2},\lambda_{j,3}}\in X(T)\cong\ZZ^3$. In the case $f=1$, we still write $\lambda=(\lambda_1,\lambda_2,\lambda_3)\in X(T)\cong\ZZ^3$ when there is no possible confusion. For $\alpha\in\Phi$ and $j\in\cJ$, we let $\alpha_{j}\in X(\un{T})$ be $\alpha$ in the $j$-th coordinate and $0$ elsewhere. We let $\un{\alpha}\in X(\un{T})$ be $\alpha$ in every coordinate.

%We identify $X_1(\un{T})$ with $X_f(T)$ via $\lambda\leftrightarrow\wt{\lambda}\eqdef\sum_{j=0}^{f-1}p^j\lambda_{j}$. 

For $\lambda\in X_1(\un{T})$, we identify the $\un{G}$-representation $L(\lambda)$ with $\boxtimes_{j=0}^{f-1}L(\lambda_j)$. Then the restriction $F(\lambda)\eqdef L(\lambda)|_{G_0(\Fp)\cong\GL_3(\Fq)}$ is identified with $\otimes_{j=0}^{f-1}F(\lambda_j)^{[j]}$, and is isomorphic to $F\bigbra{\sum_{j=0}^{f-1}p^j\lambda_j}$ defined in \S\ref{K-ext Sec Tensor}. 

%Then coincides with the $F(\lambda)$. Then we have $F(\lambda)\cong F\bigbra{\sum_{j=0}^{f-1}p^j\lambda_j}$ from $\GL_3$, that is, $\otimes_{j=0}^{f-1}F(\lambda_j)^{[j]}$.

For $\lambda\in X(\un{T})$, we write $\chi_{\lambda}\eqdef\bigbra{\sum_{j=0}^{f-1}p^j\lambda_j}|_{T(\Fq)}:T(\Fq)\to\FF\x$. We write $H\eqdef T([\Fq\x])\subseteq \GL_3(\OL)$ and we still denote by $\chi_{\lambda}$ for the corresponding character of $H$ via the mod $p$ reduction $H\ism T(\Fq)$. Note that $\chi_{\lambda}=\chi_{\mu}$ if and only if $\lambda-\mu\in(p-\pi)X(\un{T})$, where $\pi:X(\un{T})\to X(\un{T})$ is the left shift automorphism $(\lambda_j)_{j=0}^{f-1}\mapsto(\lambda_{j+1})_{j=0}^{f-1}$.

Every representation of $\GL_3(\Fq)$ is also regarded as a representation of $\GL_3(\OL)$ via inflation. Let $\mu\in X(\un{T})$. Given a representation $V$ of $\GL_3(\OL)$, we denote by $V^{\mu}$ the $H$-eigenspace of $V$ with eigencharacter $\chi_{\mu}$. Given an algebraic representation $W$ of $\un{G}$, we denote by $W_{\mu}$ the weight space of $W$ associated to the weight $\mu$. In particular, for $\lambda\in X_1(\un{T})$ we have $L(\lambda)_{\mu}=\otimes_{j=0}^{f-1}L(\lambda_j)_{\mu_j}\subseteq F(\lambda)^{\mu}$. For $\lambda\in X_1(\un{T})$, we denote by $v_{\lambda}$ any nonzero element of the $1$-dimensional space $L(\lambda)_{\lambda}$. We also note that $L(\lambda)_{\mu}\neq0$ implies $\mu_j\leq\lambda_j$ and $\lambda_{j,3}\leq\mu_{j,1},\mu_{j,2},\mu_{j,3}\leq\lambda_{j,1}$ for all $j$, which will be used throughout this section.

For $\alpha\in\Phi$ and $j\in\cJ$, we define (recall that $[t]$ is the Teichm\"uller lift of $t$)
\begin{equation}\label{K-ext Eq Xalpha}
    X_{\alpha,j}\eqdef\sum\limits_{t\in\Fq\x}t^{-p^j}u_{\alpha}([t])\in\FF[U_{\alpha}(\OL)]\subseteq\FF[\GL_3(\OL)].
\end{equation}
For simplicity, we write $X_{13,j}$ for $X_{\alpha_{13},j}$ and so on. In the case $f=1$, we write $X_{\alpha}$ for $X_{\alpha,0}$.

\begin{lemma}\label{K-ext Lem Iwasawa}
    For $\alpha\in\Phi$, we have $\FF\ddbra{U_{\alpha}(\OL)}=\FF\ddbra{X_{\alpha,0},\cdots,X_{\alpha,f-1}}$ and 
    \begin{equation*}
        u_{\alpha}([t])-1\equiv-\ssum_{j=0}^{f-1}t^{p^j}X_{\alpha,j}~\mod\,(X_{\alpha,0},\cdots,X_{\alpha,f-1})^2\quad\forall\,t\in\Fq\x.
    \end{equation*}
\end{lemma}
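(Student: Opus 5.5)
The plan is to work inside the completed group algebra $\FF\ddbra{U_\alpha(\OL)}$, where $U_\alpha(\OL)\cong\OL\cong\ZZ_p^f$ (since $L$ is unramified). Its augmentation ideal $\mathfrak m$ is generated by the elements $u_\alpha(x)-1$. Modulo $\mathfrak m^2$ the map $x\mapsto u_\alpha(x)-1$ is additive, and it kills $p\OL$, because $u_\alpha(px)-1=(u_\alpha(x)-1)^p\in\mathfrak m^2$. Hence it induces an isomorphism
\begin{equation*}
    \Fq\cong\OL/p\OL\ism\mathfrak m/\mathfrak m^2,\qquad \ovl{x}\mapsto u_\alpha(x)-1 \bmod \mathfrak m^2,
\end{equation*}
of $\Fp$-vector spaces, extended $\FF$-linearly to $\Fq\otimes_{\Fp}\FF\ism\mathfrak m/\mathfrak m^2$. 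Both sides are $f$-dimensional over $\FF$. Once we know that the $X_{\alpha,j}$ lie in $\mathfrak m$ and that their images form a basis of $\mathfrak m/\mathfrak m^2$, the topological Nakayama lemma for the complete local ring $\FF\ddbra{U_\alpha(\OL)}$ gives $\FF\ddbra{U_\alpha(\OL)}=\FF\ddbra{X_{\alpha,0},\ldots,X_{\alpha,f-1}}$. This follows the standard $\GL_2$ argument of \cite{HW22}.

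First, $X_{\alpha,j}\in\mathfrak m$. The augmentation of $X_{\alpha,j}$ is $\sum_{t\in\Fq\x}t^{-p^j}=0$, since $t\mapsto t^{-p^j}$ is a nontrivial character of $\Fq\x$; here we use $q-1\nmid p^j$, which holds because $q>2$.

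Next, compute the image of $X_{\alpha,j}$ in $\mathfrak m/\mathfrak m^2$. Write $u_\alpha([t])=1+(u_\alpha([t])-1)$. Using the sum above, the image becomes $\sum_t t^{-p^j}\iota(t)$, where $\iota(t)\in\Fq\otimes\FF$ is the class of $t$. The algebra $\Fq\otimes_{\Fp}\FF$ decomposes as $\prod_{i\in\cJ}\FF$ via $x\otimes c\mapsto(\sigma_i(x)c)_i$, so $\iota(t)=(t^{p^i})_i$ after identifying $t$ with $\sigma_0(t)$. Orthogonality of characters then gives
\begin{equation*}
    \sum_{t\in\Fq\x}t^{p^i-p^j}=(q-1)\delta_{i=j}=-\delta_{i=j},
\end{equation*}
again because $p^i\not\equiv p^j \pmod{q-1}$ for $i\neq j$ in $\cJ$. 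Hence the image of $X_{\alpha,j}$ is $-e_j$, where $e_j$ is the $j$-th idempotent. In particular the $X_{\alpha,j}$ form a basis of $\mathfrak m/\mathfrak m^2$, which proves the first claim.

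Finally, the congruence. The class of $u_\alpha([t])-1$ is $\iota(t)=\sum_j t^{p^j}e_j=-\sum_j t^{p^j}X_{\alpha,j}$ modulo $\mathfrak m^2$, and $\mathfrak m^2=(X_{\alpha,0},\ldots,X_{\alpha,f-1})^2$ by the first part. The only real care needed is in the identifications: the additivity of $x\mapsto u_\alpha(x)-1$ modulo $\mathfrak m^2$, which follows from $(g-1)(h-1)\in\mathfrak m^2$; and the matching of the embeddings $\sigma_i$ with the idempotents, so that the sign and the exponent $p^j$ (rather than $p^{-j}$) come out correctly. I expect this bookkeeping to be the main, if minor, obstacle.
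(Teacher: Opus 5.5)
Your proof is correct, but it is organized differently from the paper's. The paper's written proof does not compute the cotangent space. It imports the generation statement and the case $t=1$ of the congruence from the Claim in the proof of \cite[Lemma~3.2]{Wang3}. It then obtains general $t$ by torus conjugation: it chooses $h\in H$ with $\alpha(h)=[t]$, so that $h\,u_\alpha(1)h^{-1}=u_\alpha([t])$ and $hX_{\alpha,j}h^{-1}=t^{p^j}X_{\alpha,j}$, and the ideal $(X_{\alpha,0},\ldots,X_{\alpha,f-1})^2$ is stable under this conjugation.

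You instead identify $\mathfrak m/\mathfrak m^2$ with $\Fq\otimes_{\Fp}\FF\cong\prod_{i\in\cJ}\FF$. Orthogonality of characters then gives $X_{\alpha,j}\mapsto-e_j$ and $u_\alpha([t])-1\mapsto\sum_j t^{p^j}e_j$. This treats all $t$ at once and proves the generation statement rather than citing it.

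The mechanism is the same in both. Your $e_j$ are exactly the $H$-eigenlines of $\mathfrak m/\mathfrak m^2$, with eigencharacter $h\mapsto\ovl{\alpha(h)}^{p^j}$, and that is what the paper's identity $hX_{\alpha,j}h^{-1}=t^{p^j}X_{\alpha,j}$ records. The paper's version is shorter given the reference; yours is self-contained and makes the sign and the exponent $p^j$ (rather than $p^{-j}$) transparent.

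Two points of wording should be tightened, neither affecting correctness.
\begin{itemize}
\item $\Fq\to\mathfrak m/\mathfrak m^2$ is only an injective $\Fp$-linear map, not an isomorphism of $\Fp$-spaces unless $\FF=\Fp$. It is its $\FF$-linear extension that is an isomorphism. This holds because a $\ZZ_p$-basis $x_1,\ldots,x_f$ of $\OL$ gives $\FF\ddbra{U_\alpha(\OL)}\cong\FF\ddbra{T_1,\ldots,T_f}$ with $T_i=u_\alpha(x_i)-1$.
\item Nakayama only shows that $Y_j\mapsto X_{\alpha,j}$ defines a surjection from a power series ring in $f$ variables onto $\FF\ddbra{U_\alpha(\OL)}$. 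Injectivity follows because the target is itself a power series ring in $f$ variables, so a nonzero prime kernel would drop the Krull dimension.
\end{itemize}
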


\begin{proof}
    If $t=1$, this follows as in the Claim in the proof of \cite[Lemma~3.2]{Wang3}. For general $t\in\Fq\x$, we take any $h\in H$ such that $\alpha(h)=[t]$. Then using $hu_{\alpha}([t])=u_{\alpha}\bigbra{\alpha(h)[t]}h$, we have 
    \begin{equation*}
        u_{\alpha}([t])\!-\!1=h\bra{u_{\alpha}(1)\!-\!1}h\inv\equiv-\ssum_{j=0}^{f-1}hX_{\alpha,j}h\inv=-\ssum_{j=0}^{f-1}t^{p^j}X_{\alpha,j}~\mod\,(X_{\alpha,0},\cdots,X_{\alpha,f-1})^2,
    \end{equation*}
    which completes the proof.
\end{proof}

\begin{lemma}\label{K-ext Y changes characters}
    Let $V$ be a representation of $\GL_3(\OL)$. Let $\mu\in X(T)$, $\alpha\in\Phi$ and $j\in\cJ$. Then we have $X_{\alpha,j}\bra{V^{\mu}}\subseteq V^{\mu+\alpha_j}$.
\end{lemma}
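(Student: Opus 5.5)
The plan is a direct conjugation computation. Take $v\in V^{\mu}$, so $hv=\chi_{\mu}(h)v$ for every $h\in H$. It suffices to show that for every $h\in H$,
\begin{equation*}
hX_{\alpha,j}h^{-1}=\chi_{\alpha_j}(h)X_{\alpha,j}
\end{equation*}
in $\FF[\GL_3(\OL)]$. Granting this, we get $h(X_{\alpha,j}v)=\chi_{\alpha_j}(h)X_{\alpha,j}hv=\chi_{\alpha_j}(h)\chi_{\mu}(h)X_{\alpha,j}v$. Since $\chi_{\mu}\chi_{\alpha_j}=\chi_{\mu+\alpha_j}$ (the map $\lambda\mapsto\chi_{\lambda}$ is additive), this is exactly the statement that $X_{\alpha,j}v\in V^{\mu+\alpha_j}$.

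For the conjugation identity, write $h=\op{diag}([a_1],[a_2],[a_3])$ with $a_i\in\Fq\x$. Then $hu_{\alpha}(x)h^{-1}=u_{\alpha}(\alpha(h)x)$, and $\alpha(h)$ is the Teichmüller lift $[s]$ of some $s\in\Fq\x$; for $\alpha=\alpha_{ik}$ we have $s=a_ia_k^{-1}$. Teichmüller lifts are multiplicative, so $[s][t]=[st]$, and therefore
\begin{equation*}
hX_{\alpha,j}h^{-1}=\ssum_{t\in\Fq\x}t^{-p^j}u_{\alpha}([st])=s^{p^j}\ssum_{t'\in\Fq\x}t'^{-p^j}u_{\alpha}([t'])=s^{p^j}X_{\alpha,j},
\end{equation*}
where we substituted $t'=st$. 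Here the scalar $s^{p^j}$ is read in $\FF$ via $\sigma_0$, which is the convention already fixed in the Notation section.

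It remains to identify $s^{p^j}$ with $\chi_{\alpha_j}(h)$. By definition, $\chi_{\alpha_j}$ is the restriction to $T(\Fq)$ of the algebraic character $p^j\alpha$, composed with $\sigma_0$. Evaluated at the reduction $\op{diag}(a_1,a_2,a_3)$ of $h$, this gives $(a_ia_k^{-1})^{p^j}=s^{p^j}$, as needed.

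The only point requiring any care is keeping the embedding conventions consistent, namely that $t^{-p^j}$ in \eqref{K-ext Eq Xalpha} and $\chi_{\lambda}$ are both read through the same $\sigma_0$. Once that is checked, the argument is just the reindexing of the sum above, and I do not expect any real obstacle.
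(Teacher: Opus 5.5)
Your proposal is correct and is the paper's argument. The paper's proof only says the lemma follows from $hu_{\alpha}([t])h^{-1}=u_{\alpha}(\alpha(h)[t])$ for $h\in H$, $t\in\Fq^{\times}$; your substitution $t'=st$ and the identification $s^{p^j}=\chi_{\alpha_j}(h)$ write out that computation in full.
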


\begin{proof}
    This follows from a direct computation using $hu_{\alpha}([t])=u_{\alpha}\bigbra{\alpha(h)[t]}h$ for all $h\in H$ and $t\in\Fq\x$.
\end{proof}

\begin{lemma}\label{K-ext Lem Xalpha injective}
    Let $\lambda\in X_1(\un{T})$.
\begin{enumerate}
    \item 
    Let $\alpha\in\Phi$, $j_0\in\cJ$ and $\mu\in X(\un{T})$. Suppose that the map $X_{\alpha}^{\op{alg}}:L(\lambda_{j_0})_{\mu_{j_0}}\to L(\lambda_{j_0})_{\mu_{j_0}+\alpha}$ is injective. Then the map $X_{\alpha,j_0}:L(\lambda)_{\mu}\to L(\lambda)$ is injective.
    \item 
    More generally, let $\ell\geq1$ and $\alpha^k\in\Phi$, $j_k\in\cJ$, $\mu^k\in X(\un{T})$ for $1\leq k\leq\ell$ such that
\begin{enumerate}
    \item 
    the weights $\mu^k$ are distinct;
    \item 
    the map $X_{\alpha^k}^{\op{alg}}:L(\lambda_{j_k})_{\mu_{j_k}}\to L(\lambda_{j_k})_{\mu_{j_k}+\alpha^{k}}$ is injective for all $k$.
\end{enumerate}    
Let $v\in\oplus_{k=1}^{\ell}L(\lambda)_{\mu^k}$. Suppose that $X_{\alpha^k,j_k}v=0$ for all $k$. Then we have $v=0$.
\end{enumerate}
\end{lemma}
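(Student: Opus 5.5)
The idea is to compute the action of $X_{\alpha,j_0}$ on $L(\lambda)=\boxtimes_j L(\lambda_j)$ explicitly, using the Taylor expansion \eqref{K-ext Eq Taylor Expansion}. We then isolate a weight component on which $X_{\alpha,j_0}$ acts through $X_{\alpha}^{\op{alg}}$ in the $j_0$-th factor alone.

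\textbf{Expanding $u_\alpha([t])$.} Since $u_\alpha([t])$ acts on $L(\lambda)$ through its reduction $u_\alpha(t)$, and on the $j$-th tensor factor through the $j$-th Frobenius twist, we get
\begin{equation*}
u_\alpha([t])=\prod_{j}\Bigl(\sum_{i_j\geq0}t^{p^ji_j}X_{\alpha,i_j}^{\op{alg},(j)}\Bigr),
\end{equation*}
where the superscript $(j)$ means acting on the $j$-th factor. Because $\lambda_j\in X_1(T)$ and $L(\lambda_j)_{\nu}\neq0$ forces $\lambda_{j,3}\leq\nu_k\leq\lambda_{j,1}$, only finitely many exponents $i_j$ contribute on a given weight vector. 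Summing against $t^{-p^{j_0}}$ over $t\in\Fq\x$ gives
\begin{equation*}
X_{\alpha,j_0}=-\sum_{(i_j)}\delta_{\sum_j p^ji_j\equiv p^{j_0}\ (\mathrm{mod}\ q-1)}\ \bigotimes_j X^{\op{alg}}_{\alpha,i_j}.
\end{equation*}

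\textbf{Isolating the main term.} For $v\in L(\lambda)_\mu$, each term of this sum lands in the weight space $L(\lambda)_{\mu+\sum_j i_j\alpha_j}$. The term with $i_{j_0}=1$ and $i_j=0$ for $j\neq j_0$ lands in $L(\lambda)_{\mu+\alpha_{j_0}}$, and this weight is different from the weights reached by every other multi-index. Projecting $X_{\alpha,j_0}v$ onto $L(\lambda)_{\mu+\alpha_{j_0}}$ therefore gives $-\bigl(X_\alpha^{\op{alg}}\bigr)^{(j_0)}v$, i.e.\ $X_\alpha^{\op{alg}}$ acting on the $j_0$-th factor. This map is $\id\otimes X_\alpha^{\op{alg}}\otimes\id$ on $\bigotimes_j L(\lambda_j)_{\mu_j}$, which is injective by hypothesis. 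This proves (i).

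\textbf{Part (ii).} Write $v=\sum_k v_k$ with $v_k\in L(\lambda)_{\mu^k}$ and suppose $v\neq0$. The plan is to choose $k$ with $v_k\neq0$ such that $\mu^k$ is extremal for the partial order on $X(\un{T})$ along the direction $\alpha^k_{j_k}$, and then project $X_{\alpha^k,j_k}v$ onto the weight $\mu^k+\alpha^k_{j_k}$.

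The contribution of $v_k$ to this projection is its main term, which is nonzero by injectivity. We must show no other term contributes. A contribution from $v_{k'}$ would require $\mu^{k'}+\sum_j i_j\alpha^k_j=\mu^k+\alpha^k_{j_k}$ with $(i_j)$ a nonzero multi-index in the congruence class. Hence $\mu^{k'}-\mu^k$ would be an integer multiple of $\alpha^k$ in each coordinate.

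The main obstacle is choosing $k$ so that this is impossible. The plan is to order the $\mu^k$ by a generic linear functional $\ell$ that is positive on all the relevant $\alpha^k_{j_k}$ and is chosen so that the directions $\alpha^k_{j}$ have positive $\ell$-value. We then take $k$ with $v_k\neq0$ maximizing $\ell(\mu^k)$.

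\begin{itemize}
\item Any other $\mu^{k'}$ with $v_{k'}\neq0$ satisfies $\ell(\mu^{k'})\leq\ell(\mu^k)$.
\item Nonzero terms with $i_{j}\geq0$ only increase $\ell$. Combined with the congruence $\sum p^ji_j\equiv p^{j_k}$, this forces $\sum_j i_j\alpha^k_j$ to be at least $\alpha^k_{j_k}$ in $\ell$-value, with equality only for the main multi-index.
\item So $\mu^{k'}=\mu^k$, contradicting distinctness of the weights.
\end{itemize}

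If $\alpha^k$ varies with $k$ and no common $\ell$ works, I would instead induct on $\ell$. I would apply the single-direction argument to the subset of indices sharing a given $(\alpha,j)$ and a maximal weight among them. Weights reached via other directions differ by non-parallel roots, so they cannot collide with $\mu^k+\alpha^k_{j_k}$. This lets me conclude $v_k=0$, remove that index, and repeat.
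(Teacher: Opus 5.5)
Part (i) is correct and is essentially the paper's argument. Your observation that $\sum_j i_j\alpha_j=\alpha_{j_0}$ already forces $\un{i}=e_{j_0}$ isolates the main term a little more directly than the paper's remark that every other multi-index in the congruence class has $\sum_j i_j\geq p$.

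Part (ii) has a genuine gap, and it has three parts.

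First, you take the wrong extremum. A collision $\mu^{k'}+\sum_j i_j\alpha^k_j=\mu^k+\alpha^k_{j_k}$, combined with your second bullet, gives $\ell(\mu^{k'})\leq\ell(\mu^k)$. That is perfectly compatible with $k$ maximizing $\ell$, so your third bullet does not follow. You need $k$ (with $v_k\neq0$) \emph{minimizing} $\ell(\mu^k)$. Then the two inequalities force the main multi-index and $k'=k$. Hence the $(\mu^k+\alpha^k_{j_k})$-component of $X_{\alpha^k,j_k}v$ is $-X^{(j_k),\op{alg}}_{\alpha^k}v_k$, so $v_k=0$, and you induct.

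Second, a generic $\ell$ does not give your second bullet. For $f\geq2$, the multi-index $\un{i}=p\,e_{j_k-1}$ lies in the congruence class, since $p\cdot p^{j_k-1}=p^{j_k}$. Its value is $p\,\ell(\alpha^k_{j_k-1})$, which may be smaller than $\ell(\alpha^k_{j_k})$. Instead take $\ell(\nu)=\sum_j\ell_0(\nu_j)$ with a single functional $\ell_0$ positive on all $\alpha^k$, e.g.\ $\ell_0=\langle\cdot,\alpha_{13}^\vee\rangle$ when all $\alpha^k\in\Phi^+$. Then $\ell(\sum_j i_j\alpha^k_j)=(\sum_j i_j)\,\ell_0(\alpha^k)$, and every non-main multi-index in the class has $\sum_j i_j\geq2$. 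With these two corrections you recover the paper's proof.

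Third, your fallback for roots of mixed sign cannot be repaired. $X_{\alpha^k,j_k}$ acts on every component $v_{k'}$ in the direction $\alpha^k$. So whether $v_{k'}$ reaches $\mu^k+\alpha^k_{j_k}$ depends only on $\mu^{k'}-\mu^k$, never on $\alpha^{k'}$.

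In fact (ii) is false when both signs occur. Take $f=2$, $\lambda_0=(p+1,p-1,0)$ and $\lambda_1=(1,0,0)$, with standard basis $e_1,e_2,e_3$ of $L(\lambda_1)$.
Let $0\neq w_1\in L(\lambda_0)_{(p+1,0,p-1)}$; this is an extremal weight with $\alpha_{12}^\vee$-value $p+1$, so $w_1$ is killed by all $X^{\op{alg}}_{\alpha_{12},r}$ with $r\geq1$. Put $w_2\eqdef-X^{\op{alg}}_{\alpha_{21},p}w_1$.
The divided-power commutation formula gives $X^{\op{alg}}_{\alpha_{12},p}w_2=-\binom{p+1}{p}w_1=-w_1$.
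On $L(\lambda)$ one has $X_{\pm\alpha_{12},1}=-\bigl(\id\otimes X^{\op{alg}}_{\pm\alpha_{12}}+X^{\op{alg}}_{\pm\alpha_{12},p}\otimes\id\bigr)$, because the only multi-indices in the class that act nontrivially are $(0,1)$ and $(p,0)$.
A direct computation then shows that $v=w_1\otimes e_2+w_2\otimes e_1\neq0$ is killed by both $X_{12,1}$ and $X_{21,1}$. Yet (a) and (b) hold for $(\alpha^1,j_1)=(\alpha_{12},1)$ and $(\alpha^2,j_2)=(\alpha_{21},1)$.

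So (ii) must be stated with all $\alpha^k$ of one sign; if they all lie in $\Phi^-$, maximize $\ell$ instead. This is the only case the paper uses: every application has $\alpha^k\in\Phi^+$. The paper's own ``resp.'' clause for mixed signs has the same defect.
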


\begin{proof}
    (i). Let $v\in L(\lambda)_{\mu}=\otimes_{j=0}^{f-1}L(\lambda_j)_{\mu_j}$. First we suppose that $v=\otimes_{j=0}^{f-1}v_j$ for some $v_j\in L(\lambda_j)_{\mu_j}$. Then using \eqref{K-ext Eq Taylor Expansion}, \eqref{K-ext Eq Xalpha} and the identification $F(\lambda)=\otimes_{j=0}^{f-1}F(\lambda_j)^{[j]}$, we have  %the computation in $L(\lambda)=\otimes_{j=0}^{f-1}L(\lambda_j)^{[j]}$:
    \begin{equation}\label{K-ext Eq Lem Xalpha injective 0}
    \begin{aligned}
        X_{\alpha,j_0}v&=\ssum_{t\in\Fq}t^{-p^{j_0}}u_{\alpha}(t)v\\
        &=\ssum_{t\in\Fq}t^{-p^{j_0}}\bbra{\otimes_{j=0}^{f-1}u_{\alpha}\bra{t^{p^j}}v_j}\\
        &=\ssum_{t\in\Fq}t^{-p^{j_0}}\ssum_{i_0,\ldots,i_{f-1}\geq0}\bbra{\otimes_{j=0}^{f-1}t^{p^ji_j}X_{\alpha,i_j}^{\op{alg}}v_j}\\
        &=\ssum_{i_0,\ldots,i_{f-1}\geq0}\bbra{\ssum_{t\in\Fq\x}t^{i_0+pi_1+\cdots+p^{f-1}i_{f-1}-p^{j_0}}}\bbra{\otimes_{j=0}^{f-1}X_{\alpha,i_j}^{\op{alg}}v_j}\\
        &=-\ssum_{\un{i}\in D(j_0)}\bbra{\otimes_{j=0}^{f-1}X_{\alpha,i_j}^{\op{alg}}v_j},
    \end{aligned}
    \end{equation}
    where $D(j_0)\eqdef\bigset{\un{i}\in\NNN^f:(q-1)|(i_0+pi_1+\cdots+p^{f-1}i_{f-1}-p^{j_0})}$. Since $\un{i}\in D(j_0)$ implies $\ssum_{j=0}^{f-1}i_j\equiv1\,\mod\,(p-1)$, if we write $X_{\alpha}^{(j_0),\op{alg}}\eqdef\id\boxtimes\cdots\boxtimes X_{\alpha}^{\op{alg}}\boxtimes\cdots\boxtimes\id$, then we have
    \begin{equation}\label{K-ext Eq Lem Xalpha injective 1}
        X_{\alpha,j_0}v\in-X_{\alpha}^{(j_0),\op{alg}}v+\bigbra{\!\oplus_{\nu}L(\lambda)_{\nu}}
    \end{equation}
    with each $\nu$ satisfying $\nu=\mu+\ssum_{i_0,\ldots,i_{f-1}\geq0}i_j\alpha_{j}$ with $\ssum_{j=0}^{f-1}i_j\geq p$. 
    
    In general, by choosing an $\FF$-basis for each $L(\lambda_j)_{\mu_j}$ and writing $v$ as a finite sum of elements of the form $\otimes_{j=0}^{f-1}v_j$, we deduce that \eqref{K-ext Eq Lem Xalpha injective 1} holds for all $v\in L(\lambda)_{\mu}$, which proves (i).

    (ii). The case $\ell=1$ is treated in (i). For $\ell>1$, we let $v\in\oplus_{k=1}^{\ell}L(\lambda)_{\mu^k}$. We choose $k$ such that $\mu^{k}+\alpha^k_{j_k}\neq\mu^{k'}+\ssum_{j=0}^{f-1}i_j\alpha^k_j$ for all $k'\neq k$ and $\un{i}\in D(j_k)$. In particular, this is satisfied if $\ssum_{j=0}^{f-1}\ang{\mu^k_j,\alpha_{13}}$ is minimal (resp.\;maximal) among those $k$ with $\alpha^k\in\Phi^+$ (resp.\;$\alpha^k\in\Phi^-$) since the weights $\mu^k$ are distinct. Then by \eqref{K-ext Eq Lem Xalpha injective 1}, the image of $X_{\alpha^k,j_k}v$ in $L(\lambda)_{\mu^k+\alpha^k_{j_k}}$ is given by $X_{\alpha^k}^{(j_k),\op{alg}}$. Since the map $X_{\alpha^k}^{\op{alg}}:L(\lambda_{j_k})_{\mu_{j_k}}\to L(\lambda_{j_k})_{\mu_{j_k}+\alpha^{k}}$ is injective and $X_{\alpha^k,j_k}v=0$, we deduce that $v$ has zero image in $L(\lambda)_{\mu^k}$. Hence we conclude that $v=0$ by induction hypothesis.
\end{proof}

\begin{lemma}\label{K-ext Lem Y maps into}
    Let $\lambda\in X_1(\un{T})$, $\alpha\in\Phi$ and $\mu\in X(\un{T})$. Suppose that $L(\lambda_j)_{\mu_j+i\alpha}=0$ for all $i\geq p$ and $j\in\cJ$. Then we have $X_{\alpha,j}=-X_{\alpha}^{(j),\op{alg}}$ on $L(\lambda)_{\mu}$ for all $j$ (see above \eqref{K-ext Eq Lem Xalpha injective 1} for $X_{\alpha}^{(j),\op{alg}}$).
\end{lemma}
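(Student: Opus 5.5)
The plan is to redo the computation \eqref{K-ext Eq Lem Xalpha injective 0} from the proof of Lemma \ref{K-ext Lem Xalpha injective}(i), now using the vanishing hypothesis to kill every term except the linear one.

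First reduce to pure tensors. By linearity, choose an $\FF$-basis of each $L(\lambda_j)_{\mu_j}$ and assume $v=\otimes_{j=0}^{f-1}v_j$ with $v_j\in L(\lambda_j)_{\mu_j}$. Exactly as in \eqref{K-ext Eq Lem Xalpha injective 0}, we get
\begin{equation*}
    X_{\alpha,j_0}v=-\ssum_{\un{i}\in D(j_0)}\bbra{\otimes_{j=0}^{f-1}X_{\alpha,i_j}^{\op{alg}}v_j},
\end{equation*}
with $D(j_0)$ as defined there.

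Next use the hypothesis. Since $X_{\alpha,i}^{\op{alg}}v_j\in L(\lambda_j)_{\mu_j+i\alpha}$, and this space vanishes for $i\geq p$, only tuples $\un{i}$ with $0\leq i_j\leq p-1$ for every $j$ contribute.

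For such tuples, set $n=\ssum_j p^ji_j$, so $0\leq n\leq q-1$. The condition $\un{i}\in D(j_0)$ says $n\equiv p^{j_0}\pmod{q-1}$. Because $1\leq p^{j_0}\leq q-1$, the only possibilities are $n=p^{j_0}$ itself, or $n=p^{j_0}+(q-1)$ when $p^{j_0}=1$, i.e. $n=q$. The second case is excluded by $n\leq q-1$.

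So $n=p^{j_0}$. By uniqueness of base-$p$ expansions with digits in $\{0,\ldots,p-1\}$, this forces $i_{j_0}=1$ and $i_j=0$ for $j\neq j_0$. Hence only one term survives, and
\begin{equation*}
    X_{\alpha,j_0}v=-\,v_0\otimes\cdots\otimes X_{\alpha}^{\op{alg}}v_{j_0}\otimes\cdots\otimes v_{f-1}=-X_{\alpha}^{(j_0),\op{alg}}v.
\end{equation*}

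The only step needing care is the edge case in the base-$p$ digit argument. If $q-1=p^{j_0}$, which can only happen when $p=2$, $f=1$, the residue class of $p^{j_0}$ would contain $0$; this cannot occur since $p\geq5$. Also, when $f=1$, $n=q-1$ is distinct from $p^{0}=1$ because $q\geq5$.

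The bounds $i_j\leq p-1$ also guarantee that the divided powers $X_{\alpha,i}^{\op{alg}}$ appearing in the expansion \eqref{K-ext Eq Taylor Expansion} are the correct ones, with no further subtlety. Everything else is the routine computation already carried out in Lemma \ref{K-ext Lem Xalpha injective}.
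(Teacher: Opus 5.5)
Your proposal is correct and is the same argument as the paper's. You use the vanishing hypothesis to restrict the expansion \eqref{K-ext Eq Lem Xalpha injective 0} to digits $0\le i_j\le p-1$, and then observe that $(q-1)\mid\bigl(\sum_j p^j i_j-p^{j_0}\bigr)$ forces $i_j=\delta_{j=j_0}$.

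One minor point: the only competing solution in $[0,q-1]$ would be $n=0$ when $p^{j_0}=q-1$, not $n=q$ when $p^{j_0}=1$. You do rule out that case separately, so the argument is complete.
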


\begin{proof}
    Let $j_0\in\cJ$. If $0\leq i_j\leq p-1$ satisfying $(q-1)|(i_0+pi_0+\cdots+p^{f-1}i_{f-1}-p^{j_0})$, then we must have $i_j=\delta_{j=j_0}$ for all $j$. The result then follows from \eqref{K-ext Eq Lem Xalpha injective 0}.
\end{proof}

\begin{lemma}\label{K-ext Lem operator mapsto zero}
    Let $\lambda\in X_1(\un{T})$, $j_0\in\cJ$ and $\alpha\in\set{\alpha_{21},\alpha_{32}}$. Let $\un{a},\un{b}\in\NNN^f$ such that
\begin{enumerate}
    \item 
    $\sum_{j\neq j_0}(a_j+b_j)\geq\bigbra{\!\sum_{j\neq j_0}(\lambda_{j,1}-\lambda_{j,3})}-(p-1)$;
    \item 
    $a_{j_0}+b_{j_0}\geq\lambda_{j_0,1}-\lambda_{j_0,3}+1$.
\end{enumerate}
    Then we have $\bigbra{\!\prod_{j=0}^{f-1}{X_{31}^{a_j}X_{\alpha}^{b_j}}}v_{\lambda}=0$ in $L(\lambda)$.
\end{lemma}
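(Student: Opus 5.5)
The plan is to argue purely with weights, using the expansion \eqref{K-ext Eq Lem Xalpha injective 0} from the proof of Lemma~\ref{K-ext Lem Xalpha injective}(i). That expansion says what a single operator $X_{\beta,j}$ does to a weight vector. In the statement I read $X_{31}^{a_j}X_{\alpha}^{b_j}$ as $X_{31,j}^{a_j}X_{\alpha,j}^{b_j}$.

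\textbf{Step 1: the effect of one operator.} Let $\beta\in\Phi$, $j\in\cJ$, and let $w\in L(\lambda)_{\nu}$. Writing $w$ as a sum of pure tensors and applying \eqref{K-ext Eq Lem Xalpha injective 0}, we get
\[
X_{\beta,j}w\in\bigoplus_{\un{i}\in D(j)}L(\lambda)_{\nu+\sum_k i_k\beta_k}.
\]
Every $\un{i}\in D(j)$ satisfies $\sum_k i_kp^k\equiv p^j\ (\mathrm{mod}\ q-1)$. It also satisfies $\sum_k i_k\geq1$ and $\sum_k i_k\equiv1\ (\mathrm{mod}\ p-1)$.

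\textbf{Step 2: a counting function.} I treat $\alpha=\alpha_{21}$; the case $\alpha=\alpha_{32}$ is symmetric, using $\nu_{k,3}-\lambda_{k,3}$ in place of $\lambda_{k,1}-\nu_{k,1}$. Both $\alpha_{31}$ and $\alpha_{21}$ lower the first coordinate by exactly $1$. So for a weight $\nu$ I set
\[
m_k(\nu)\eqdef\lambda_{k,1}-\nu_{k,1},\qquad r_k\eqdef\lambda_{k,1}-\lambda_{k,3}.
\]
Then $m_k(\lambda)=0$. If $L(\lambda)_\nu\neq0$, then $0\leq m_k(\nu)\leq r_k$, since $\lambda_{k,3}\leq\nu_{k,1}\leq\lambda_{k,1}$.

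By Step~1, applying one operator $X_{\beta,j}$ with $\beta\in\{\alpha_{31},\alpha_{21}\}$ to a weight vector produces components whose vectors $\un{m}$ increase by some $\un{i}\in D(j)$. Iterate over the product in the statement, expanding into weight components at each stage. Every nonzero component of $\bigl(\prod_j X_{31,j}^{a_j}X_{\alpha,j}^{b_j}\bigr)v_\lambda$ then lies in some $L(\lambda)_\nu$ whose vector $\un{m}=\un{m}(\nu)$ is a sum of $\sum_j c_j$ vectors $\un{i}$, where $c_j\eqdef a_j+b_j$. Exactly $c_j$ of these vectors come from $D(j)$. Consequently
\[
\sum_k m_k\geq\sum_k c_k\quad\text{and}\quad \sum_k m_k\equiv\sum_k c_k\ (\mathrm{mod}\ p-1).
\]
Moreover, equality $\sum_k m_k=\sum_k c_k$ forces each $\un{i}$ used to have digit sum $1$. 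An $\un{i}\in D(j)$ with digit sum $1$ must be $\un{i}=(\delta_{k=j})_k$, since $p^k\equiv p^j$ modulo $q-1$ forces $k=j$. In that case $m_k=c_k$ for all $k$.

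\textbf{Step 3: the contradiction.} Adding hypotheses (i) and (ii) gives
\[
\sum_k c_k\geq\sum_k r_k-(p-1)+1,\qquad\text{so}\qquad \sum_k r_k\leq\sum_k c_k+p-2.
\]
Suppose some component is nonzero. Then $\sum_k m_k\leq\sum_k r_k<\sum_k c_k+(p-1)$. Together with the congruence and the lower bound from Step~2, this forces $\sum_k m_k=\sum_k c_k$, and therefore $m_{j_0}=c_{j_0}$. But hypothesis (ii) gives $c_{j_0}\geq r_{j_0}+1$, so $m_{j_0}>r_{j_0}$. This contradicts the bound $m_{j_0}\leq r_{j_0}$ that holds whenever $L(\lambda)_\nu\neq0$. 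Hence every component vanishes, and the product applied to $v_\lambda$ is $0$.

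\textbf{Main obstacle.} The delicate point is Step~2: making sure the bookkeeping of $\un{m}$ is additive through iterated applications. This requires both operators in the product to move only the tracked coordinate, by exactly one unit per root step. That is precisely why $\alpha$ must be paired with $\alpha_{31}$ in the way described: $\alpha_{21}$ with the first coordinate, or $\alpha_{32}$ with the third. A mixed choice would break the argument.
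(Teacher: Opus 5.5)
Your proposal is correct and is essentially the paper's argument. The paper uses the same expansion \eqref{K-ext Eq Lem Xalpha injective 1} to split the product applied to $v_\lambda$ into two parts. The first is the component in $L(\lambda)_\mu$ with $\mu=\lambda+\sum_j(a_j\alpha_{31,j}+b_j\alpha_{21,j})$, which vanishes by (ii). The second is components whose total first-coordinate drop is at least $\sum_j(a_j+b_j)+(p-1)\geq\sum_j(\lambda_{j,1}-\lambda_{j,3})+1$; these vanish because some coordinate $j$ overshoots. Your bookkeeping with $\un{m}(\nu)$ and the congruence modulo $p-1$ is just a repackaging of this same dichotomy.
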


\begin{proof}
    We give a proof for $\alpha=\alpha_{21}$, the case $\alpha=\alpha_{32}$ being symmetric. By \eqref{K-ext Eq Lem Xalpha injective 1} we have
    \begin{equation*}
        \bbra{\sprod_{j=0}^{f-1}{X_{31}^{a_j}X_{\alpha}^{b_j}}}v_{\lambda}\in L(\lambda)_{\mu}\oplus\bigbra{\!\oplus_{\nu}L(\lambda)_{\nu}},
    \end{equation*}
    where $\mu=\lambda+\ssum_{j=0}^{f-1}\bigbra{a_j\alpha_{31}+b_j\alpha_{21}}$, and each $\nu$ satisfies $\nu=\lambda+\ssum_{j=0}^{f-1}\bigbra{a'_j\alpha_{31}+b'_j\alpha_{21}}$ with $\un{a}',\un{b}'\in\NNN^f$ and $\ssum_{j=0}^{f-1}\bigbra{a'_j+b'_j}\geq\ssum_{j=0}^{f-1}\bra{a_j+b_j}+(p-1)\geq\ssum_{j=0}^{f-1}\bra{\lambda_{j,1}-\lambda_{j,3}}+1$.

    Since $\mu_{j_0,1}=\lambda_{j_0,1}-a_{j_0}-b_{j_0}\leq\lambda_{j_0,3}-1$, we have $L(\lambda)_{\mu}=0$. Also, for each $\nu$ as above, there exists $j$ such that $a'_j+b'_j\geq\lambda_{j,1}-\lambda_{j,3}+1$, which implies $\nu_{j,1}=\lambda_{j,1}-a'_j-b'_j\leq\lambda_{j,3}-1$, hence $L(\lambda)_{\nu}=0$. Therefore, we conclude that $\bigbra{\!\prod_{j=0}^{f-1}{X_{31}^{a_j}X_{\alpha}^{b_j}}}v_{\lambda}=0$.
\end{proof}

\begin{lemma}\label{K-ext Lem lambda-alpha12}
    Let $j_0\in\cJ$ and $\lambda\in X_1(\un{T})$ such that $\lambda_{j_0-1,1}-\lambda_{j_0-1,3}\leq 2p-3$. Let $\alpha\in\set{\alpha_{12},\alpha_{23}}$ and $w\in F(\lambda)^{\lambda-\alpha_{j_0}}$. Suppose that $X_{\alpha_{13}-\alpha,j_0}w=X_{13,j_0-1}w=0$. Then we have $w\in L(\lambda)_{\lambda-\alpha_{j_0}}$.
\end{lemma}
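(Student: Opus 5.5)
We decompose $w$ according to weight spaces. Since $F(\lambda)=L(\lambda)|_{\GL_3(\Fq)}$ and $L(\lambda)=\oplus_\mu L(\lambda)_\mu$ with $L(\lambda)_\mu\subseteq F(\lambda)^{\mu}$, we have
\begin{equation*}
F(\lambda)^{\lambda-\alpha_{j_0}}=\bigoplus_{\mu}L(\lambda)_{\mu},
\end{equation*}
where $\mu$ runs over the weights with $\mu-(\lambda-\alpha_{j_0})\in(p-\pi)X(\un{T})$ and $L(\lambda)_\mu\neq0$. So we write $w=\sum_\mu w_\mu$. The goal is to show that $w_\mu=0$ for every $\mu\neq\lambda-\alpha_{j_0}$.

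\textbf{Step 1: list the candidate weights.} For each such $\mu$ we have $\mu=\lambda-\alpha_{j_0}+(p-\pi)\eta$ with $\eta\in X(\un{T})$. The constraints $\mu_j\le\lambda_j$ and $\lambda_{j,3}\le\mu_{j,i}\le\lambda_{j,1}$ (so every coordinate of $\mu_j$ lies in an interval of length $\lambda_{j,1}-\lambda_{j,3}\le 2p-2$) force $\eta$ to be small. Concretely, $\eta$ must be a combination of roots with coefficients of absolute value at most $2$, and the shift $(p-\pi)\eta$ changes the $j$-th coordinate by $p\eta_j-\eta_{j+1}$. One checks that the extra weights (those other than $\lambda-\alpha_{j_0}$) are obtained by moving about $p\beta$ in some embedding $j$ while adding $-\beta$ in embedding $j+1$, for $\beta$ a root or a sum of roots. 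Since the shift in embedding $j$ is roughly $p\beta$, it can only be absorbed when $\lambda_{j,1}-\lambda_{j,3}\ge p-1$ or so.

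\textbf{Step 2: kill the extra components with the two annihilation conditions.} We apply Lemma~\ref{K-ext Lem Xalpha injective}(ii) to $v=\sum_{\mu\neq\lambda-\alpha_{j_0}}w_\mu$. For the main component, $X_{\alpha_{13}-\alpha,j_0}$ and $X_{13,j_0-1}$ kill $L(\lambda)_{\lambda-\alpha_{j_0}}$: by Lemma~\ref{K-ext Lem Y maps into} both act there as $-X^{(j),\op{alg}}$, and they raise the weight above $\lambda$ in embedding $j_0$, respectively $j_0-1$. Hence $v$ is also annihilated by both operators. For each extra weight $\mu^k$ we must assign a root $\alpha^k$, equal to $\alpha_{13}-\alpha$ in embedding $j_0$ or to $\alpha_{13}$ in embedding $j_0-1$, such that $X^{\op{alg}}_{\alpha^k}$ is injective on $L(\lambda_{j_k})_{\mu^k_{j_k}}$. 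This injectivity comes from Lemma~\ref{K-ext Lem Xalg injective}: we need $\langle\mu^k_{j_k},\alpha^{k\vee}\rangle<0$ and $\mu^k_{j_k}+p\alpha^k\not\le\lambda_{j_k}$. The latter holds for $\alpha_{13}$ in embedding $j_0-1$ precisely because $\lambda_{j_0-1,1}-\lambda_{j_0-1,3}\le 2p-3$, which is where that hypothesis enters. The negativity holds because the extra weights have a component pushed to about $\lambda-(p-1)\alpha_{13}$ or $\lambda-\alpha_{13}-p\alpha$-type positions in embedding $j_0-1$ or $j_0$. Once the roots are assigned, Lemma~\ref{K-ext Lem Xalpha injective}(ii) gives $v=0$, so $w\in L(\lambda)_{\lambda-\alpha_{j_0}}$.

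\textbf{Main obstacle.} The difficulty is the case analysis in Step 1, and matching each extra weight to one of the two operators with the correct sign of the pairing. In particular, some weights are shifted in embedding $j_0$ itself, where only $X_{\alpha_{13}-\alpha,j_0}$ is available. For those I would first try $\alpha^k=\alpha_{13}-\alpha$. If the pairing is not negative, I would instead use the $j_0-1$ shift $-\eta_{j_0}$ that accompanies a nonzero $\eta_{j_0-1}$ and apply $X_{13,j_0-1}$. A secondary issue is the ordering condition in Lemma~\ref{K-ext Lem Xalpha injective}(ii), which must be arranged by choosing minimal or maximal $\alpha_{13}$-pairing among the extra weights. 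The case $f=1$, where $j_0-1=j_0$, needs separate care, since both operators then live in the same embedding.
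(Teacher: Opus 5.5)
Your overall architecture is the paper's. You decompose $w$ along the weight spaces $L(\lambda)_\nu\subseteq F(\lambda)^{\lambda-\alpha_{j_0}}$. You note via Lemma~\ref{K-ext Lem Y maps into} that both operators kill the component in $L(\lambda)_{\lambda-\alpha_{j_0}}$. You then remove the other components with Lemma~\ref{K-ext Lem Xalpha injective}(ii) and Lemma~\ref{K-ext Lem Xalg injective}.

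The gap is that Step~1, which is the actual content of the proof, is only asserted (``one checks''). So nothing in the proposal shows that two operators suffice. Concretely, take $\alpha=\alpha_{12}$ and $\mu=\lambda-\alpha_{12,j_0}$, and write each candidate as $\nu=\mu-\sum_j\big((pc_{1,j}-c_{1,j-1})\alpha_{12,j}+(pc_{2,j}-c_{2,j-1})\alpha_{23,j}\big)$ with $\un{c}_1,\un{c}_2\in\ZZ^f$.
\begin{itemize}
\item The bounds $\lambda_{j,3}\le\nu_{j,1},\nu_{j,3}\le\lambda_{j,1}$ give $0\le c_{1,j},c_{2,j}\le 1$, $c_{1,j}=0\Rightarrow c_{1,j-1}=0$ for $j\neq j_0$, and $\un{c}_2\in\{\un{0},\un{1}\}$.
\item The bound $\nu_{j,2}\le\lambda_{j,1}$ gives $c_{1,j}\le c_{2,j}$.
\end{itemize}
Hence the only extra weights are $\nu^{\emptyset}=\lambda-\alpha_{12,j_0}-(p-1)\un{\alpha_{23}}$ and, for $j_1\in\cJ$, the weight $\nu^{j_1}$ with $\un{c}_2=\un{1}$ and $c_{1,j}=1$ exactly for $j\in\{j_1,j_1+1,\dots,j_0-1\}$. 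One then assigns $\nu^{\emptyset}$ to $(\alpha_{23},j_0)$ and every $\nu^{j_1}$ to $(\alpha_{13},j_0-1)$. The swapped choices fail in general. For $f\ge2$, the $\alpha_{13}^\vee$-pairing of $\nu^{\emptyset}_{j_0-1}$ is $\lambda_{j_0-1,1}-\lambda_{j_0-1,3}-(p-1)$, and the $\alpha_{23}^\vee$-pairing of $\nu^{j_0}_{j_0}$ is $\lambda_{j_0,2}-\lambda_{j_0,3}-p+2$; both can be $\ge0$.

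Second, you put the hypothesis $\lambda_{j_0-1,1}-\lambda_{j_0-1,3}\le 2p-3$ in the wrong place. The condition $\nu^{j_1}_{j_0-1}+p\alpha_{13}\not\le\lambda_{j_0-1}$ holds automatically, since $\nu^{j_1}_{j_0-1}+p\alpha_{13}-\lambda_{j_0-1}$ has $\alpha_{23}$-coefficient $1$. The hypothesis is needed twice.
\begin{itemize}
\item In the enumeration, it forces $c_{2,j}\le1$, via $(q-1)c_{2,j}=\sum_ip^{f-1-i}(pc_{2,j-i}-c_{2,j-i-1})<2(q-1)$. Without it, if every $\lambda_j|_{\SL_3}=(p-1,p-1)$, the weight with $\un{c}_1=\un{1}$, $\un{c}_2=\un{2}$ has a nonzero weight space. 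Your description (``coefficients of absolute value at most $2$'') does not exclude it.
\item It gives the negativity $\langle\nu^{j_1}_{j_0-1},\alpha_{13}^\vee\rangle\le\lambda_{j_0-1,1}-\lambda_{j_0-1,3}-(2p-2)<0$. A weight sitting exactly at $\lambda_{j_0-1}-(p-1)\alpha_{13}$ (e.g.\ $\nu^{j_0}_{j_0-1}$ when $f\ge2$) has pairing $0$ if $\lambda_{j_0-1}|_{\SL_3}=(p-1,p-1)$. So negativity does not follow merely from the weights being ``pushed to about $\lambda-(p-1)\alpha_{13}$''.
\end{itemize}
Finally, $f=1$ needs no separate treatment. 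The same enumeration yields the two extra weights $\lambda-\alpha_{12}-(p-1)\alpha_{23}$ and $\lambda-p\alpha_{12}-(p-1)\alpha_{23}$, killed by $X_{23}$ and $X_{13}$ respectively.
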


\begin{proof}
    We give a proof for $\alpha=\alpha_{12}$, the case $\alpha=\alpha_{23}$ being symmetric. We write $\mu\eqdef\lambda-\alpha_{12,j_0}$.
    
    Let $\nu\in X(\un{T})$ such that $0\neq L(\lambda)_{\nu}\subseteq F(\lambda)^{\mu}$. Then we have 
    \begin{equation*}
        \nu-\mu\in(p-\pi)X(\un{T})\cap\bbbra{\oplus_{j=0}^{f-1}\bigbra{\ZZ\alpha_{12,j}\oplus\ZZ\alpha_{23,j}}}=\oplus_{j=0}^{f-1}\bigbbbra{\ZZ(p-\pi)\alpha_{12,j}\oplus\ZZ(p-\pi)\alpha_{23,j}}.
    \end{equation*}
    Hence there exists $\un{c}_1,\un{c}_2\in\ZZ^f$ such that
    \begin{equation}\label{K-ext Eq Lem lambda-alpha12 1}
        \nu=\lambda-\alpha_{12,j_0}-\ssum_{j=0}^{f-1}\bigbra{\bra{pc_{1,j}-c_{1,j-1}}\alpha_{12,j}+\bra{pc_{2,j}-c_{2,j-1}}\alpha_{23,j}}.
    \end{equation}
    
    Since $\lambda_{j,3}\leq\nu_{j,1}\leq\lambda_{j,1}$ for all $j$, we must have
    %\begin{equation*}
    %\begin{aligned}
        %&-1\leq pc_{1,j_0}-c_{1,j_0-1}\leq\lambda_{j_0,1}-\lambda_{j_0,3}-1\leq2p-3;\\
        %&0\leq pc_{1,j}-c_{1,j-1}\leq\lambda_{j,1}-\lambda_{j,3}\leq 2p-2-\delta_{j=j_0-1}\quad\forall\,j\neq{j_0}.
    %\end{aligned}
    %\end{equation*}
    \begin{equation*}
        -\delta_{j=j_0}\leq pc_{1,j}-c_{1,j-1}\leq\lambda_{j,1}-\lambda_{j,3}-\delta_{j=j_0}\leq 2p-2-\delta_{j=j_0}\quad\forall\,j.
    \end{equation*}
    Using $c_{1,j}=\bigbbbra{\sum_{i=0}^{f-1}p^{f-1-i}\bra{pc_{1,j-i}-c_{1,j-i-1}}}/(q-1)$, we deduce that $0\leq c_{1,j}\leq1$ for all $j$. Since $pc_{1,j}-c_{1,j-1}\geq0$ for all $j\neq j_0$, we have $c_{1,j}=0$ implies $c_{1,j-1}=0$ for all $j\neq j_0$.

    Since $\lambda_{j,3}\leq\nu_{j,3}\leq\lambda_{j,1}$ for all $j$ and $\lambda_{j_0-1,1}-\lambda_{j_0-1,3}\leq 2p-3$, we must have
    \begin{equation*}
        0\leq pc_{2,j}-c_{2,j-1}\leq\lambda_{j,1}-\lambda_{j,3}\leq(2p-2)-\delta_{j=j_0-1}\quad\forall\,j.
    \end{equation*}
    As in the previous paragraph, we have $0\leq c_{2,j}\leq1$ for all $j$. Since $pc_{2,j}-c_{2,j-1}\geq0$ for all $j$, we must have either $\un{c}_2=\un{1}$ or $\un{c}_2=\un{0}$. 

    Since $\nu_{j,2}\leq\lambda_{j,1}$ for all $j$, we must have
    \begin{equation*}
        p\bra{c_{1,j}-c_{2,j}}-\bra{c_{1,j-1}-c_{2,j-1}}\leq\lambda_{j,1}-\lambda_{j,2}-\delta_{j=j_0}\leq(p-1)-\delta_{j=j_0}\quad\forall\,j,
    \end{equation*}
    which implies $c_{1,j}-c_{2,j}\leq0$ for all $j$.
    
    %Similarly, from $\lambda_{j,3}\leq\nu_{j,3}\leq\lambda_{j,1}$ for all $j$ and using $\lambda_{j_0-1,1}-\lambda_{j_0-1,3}\leq 2p-3$, we deduce that either $\un{c}_2=\un{1}$ or $\un{c}_2=\un{0}$. From $\lambda_{j,3}\leq\nu_{j,2}\leq\lambda_{j,1}$ we deduce that $-1\leq c_{1,j}-c_{2,j}\leq0$ for all $j\in\cJ$.

    For $j_1\in\cJ$, we let $\nu^{j_1}\in X_1(\un{T})$ be as in \eqref{K-ext Eq Lem lambda-alpha12 1} with $\un{c}_2=\un{1}$, $c_{1,j}=1$ for $j=j_1,j_1+1,\ldots,j_0-1$ and $c_{1,j}=0$ otherwise. More explicitly, we have
    \begin{equation*}
        \nu^{j_1}_j=
    \begin{cases}
        \lambda_j-(p-1+\delta_{j=j_1})\alpha_{12}-(p-1)\alpha_{23}&\text{if}~j=j_1,j_1+1,\ldots,j_0-1\\
        \lambda_j-(p-1)\alpha_{23}&\text{otherwise}.
    \end{cases}
    \end{equation*}
    We also let $\nu^{\emptyset}\eqdef\lambda-\alpha_{12,j_0}-(p-1)\un{\alpha_{23}}$. Then the previous paragraphs show that
    \begin{equation*}
        w\in L(\lambda)_{\mu}\oplus L(\lambda)_{\nu^{\emptyset}}\oplus\bigbra{\!\oplus_{j_1\in\cJ}L(\lambda)_{\nu^{j_1}}}.
    \end{equation*}

    For each $j_1\in\cJ$, since $\ang{\nu^{j_1}_{j_0-1},\alpha_{13}^{\vee}}\leq\ang{\lambda_{j_0-1}-(p-1)\alpha_{13},\alpha_{13}^{\vee}}=\lambda_{j_0-1,1}-\lambda_{j_0-1,3}-(2p-2)<0$, and $\nu^{j_1}_{j_0-1}+i\alpha_{13}\nleq\lambda_{j_0-1}$ for $i\geq p$, the map $X_{13}^{\op{alg}}:L(\lambda_{j_0-1})_{\nu^{j_1}_{j_0-1}}\to L(\lambda_{j_0-1})_{\nu^{j_1}_{j_0-1}+\alpha_{13}}$ is injective by Lemma \ref{K-ext Lem Xalg injective}. Similarly, the map $X_{23}^{\op{alg}}:L(\lambda_{j_0})_{\nu^{\emptyset}_{j_0}}\to L(\lambda_{j_0})_{\nu^{\emptyset}_{j_0}+\alpha_{23}}$ is injective by Lemma \ref{K-ext Lem Xalg injective}. Moreover, by Lemma \ref{K-ext Lem Y maps into} we have $X_{13,j_0-1}|_{L(\lambda)_{\mu}}=X_{23,j_0}|_{L(\lambda)_{\mu}}=0$. Then by Lemma \ref{K-ext Lem Xalpha injective} (ii) applied to $\bigset{\bra{\alpha_{23},j_0,\nu^{\emptyset}},\bra{\alpha_{13},j_0-1,\nu^{j_1}}:j_1\in\cJ}$ we conclude that $w\in L(\lambda)_{\mu}$.
\end{proof}

\begin{lemma}\label{K-ext Lem lambda-alpha13}
    Let $j_0\in\cJ$ and $\lambda\in X_1(\un{T})$ such that $\lambda_{j_0-1,1}-\lambda_{j_0-1,2},\lambda_{j_0-1,2}-\lambda_{j_0-1,3}\leq p-2$. Let $w\in F(\lambda)^{\lambda-\alpha_{13,j_0}}$ and $\alpha\in\set{\alpha_{12},\alpha_{23}}$. Suppose that $X_{\alpha,j_0-1}^{1+\delta_{f=1}}w=0$. Then we have $w\in L(\lambda)_{\lambda-\alpha_{13,j_0}}$.
\end{lemma}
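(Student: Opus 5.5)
We follow the strategy of Lemma \ref{K-ext Lem lambda-alpha12}: first determine all weights $\nu$ with $0\neq L(\lambda)_\nu\subseteq F(\lambda)^{\mu}$, where $\mu\eqdef\lambda-\alpha_{13,j_0}$, and then use Lemma \ref{K-ext Lem Xalpha injective}(ii) to kill every component of $w$ outside $L(\lambda)_{\mu}$. We treat $\alpha=\alpha_{12}$; the case $\alpha_{23}$ is symmetric.

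\textbf{Step 1: the possible weights.} As before, any such $\nu$ can be written as
\begin{equation*}
\nu=\lambda-\alpha_{13,j_0}-\ssum_{j}\bigbra{(pc_{1,j}-c_{1,j-1})\alpha_{12,j}+(pc_{2,j}-c_{2,j-1})\alpha_{23,j}}
\end{equation*}
with $\un{c}_1,\un{c}_2\in\ZZ^f$. The bounds $\lambda_{j,3}\leq\nu_{j,1},\nu_{j,3}\leq\lambda_{j,1}$ give
\begin{equation*}
-\delta_{j=j_0}\leq pc_{i,j}-c_{i,j-1}\leq\lambda_{j,1}-\lambda_{j,3}-\delta_{j=j_0}.
\end{equation*}
The same $(q-1)$-division argument then gives $0\leq c_{i,j}\leq1$. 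The bound $\nu_{j,2}\leq\lambda_{j,1}$ (together with the symmetric one) controls $c_{1,j}-c_{2,j}$.

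The hypothesis $\lambda_{j_0-1,1}-\lambda_{j_0-1,2},\lambda_{j_0-1,2}-\lambda_{j_0-1,3}\leq p-2$ should be used at the index $j_0-1$. There the middle coordinate $\nu_{j_0-1,2}$ must lie in $[\lambda_{j_0-1,3},\lambda_{j_0-1,1}]$, and the shift is $\pm(p-1)$ or $\pm p$. This should force $c_{1,j_0-1}=c_{2,j_0-1}$. Propagating the inequality $c_{i,j}=0\Rightarrow c_{i,j-1}=0$ for $j\neq j_0$, we expect the surviving candidates to be
\begin{equation*}
\mu,\quad \nu^{\emptyset}\eqdef\lambda-\alpha_{13,j_0}+\text{(a shift by }(p-1)\text{ in every coordinate)},
\end{equation*}
together with a family $\nu^{j_1}$, $j_1\in\cJ$, in which $\un{c}_1=\un{c}_2$ equals $1$ on the cyclic interval $[j_1,j_0-1]$. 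Concretely, at $j_0-1$ these weights equal $\lambda_{j_0-1}-(p-1)\alpha_{13}$ up to a small correction. Enumerating these carefully is the first technical task.

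\textbf{Step 2: injectivity.} For each non-$\mu$ candidate we check that $X_{\alpha}^{\op{alg}}=X_{12}^{\op{alg}}$ is injective on the $(j_0-1)$-component, using Lemma \ref{K-ext Lem Xalg injective}. The pairing $\ang{\nu_{j_0-1},\alpha_{12}^{\vee}}$ equals $\lambda_{j_0-1,1}-\lambda_{j_0-1,2}$ shifted down by roughly $p$, which is negative because $\lambda_{j_0-1,1}-\lambda_{j_0-1,2}\leq p-2$. Moreover $\nu_{j_0-1}+p\alpha_{12}\nleq\lambda_{j_0-1}$. When $f=1$ we have $j_0-1=j_0$, so the extra shift $-\alpha_{13}$ lives in the same coordinate. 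There $\ang{\nu,\alpha_{12}^{\vee}}<-1$, and we use the second part of Lemma \ref{K-ext Lem Xalg injective} for $(X_{12}^{\op{alg}})^2$, which explains the exponent $1+\delta_{f=1}$. We also need the operator to vanish on $L(\lambda)_{\mu}$. By Lemma \ref{K-ext Lem Y maps into}, $X_{12,j_0-1}$ acts there as $-X_{12}^{(j_0-1),\op{alg}}$, which kills $v_{\lambda_{j_0-1}}$ when $f>1$. When $f=1$, $(X_{12}^{\op{alg}})^2$ sends $\lambda-\alpha_{13}$ to $\lambda+\alpha_{12}-\alpha_{23}\not\leq\lambda$, so it also vanishes.

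\textbf{Step 3: conclusion and main obstacle.} Lemma \ref{K-ext Lem Xalpha injective}(ii) is stated for single applications of $X_{\alpha,j}$. For $f=1$ we need a squared version: we redo the argument of \eqref{K-ext Eq Lem Xalpha injective 1} for $X_{\alpha}^2$, whose leading term is $(X_{\alpha}^{\op{alg}})^2$ up to a unit, with error terms of higher total degree. We then run the same induction with the minimal-weight choice, which yields $w\in L(\lambda)_{\mu}$. We expect the main obstacle to be Step 1: the case bookkeeping that exactly pins down the candidate weights. In particular, it must exclude candidates where $\un{c}_1\neq\un{c}_2$ at $j_0-1$, for which the injectivity of $X_{12}^{\op{alg}}$ might fail.
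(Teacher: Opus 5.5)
Your proposal is correct and follows the paper's proof essentially step by step. The shared steps are: the $(p-\pi)$-lattice analysis forcing $\un{c}_1=\un{c}_2\in\{0,1\}^f$, either zero or supported on a cyclic interval $[j_1,j_0-1]$ (your $\nu^{\emptyset}$ is just the member $j_1=j_0$ of that family); injectivity of $X_{12}^{\op{alg}}$ on the $(j_0-1)$-component via Lemma~\ref{K-ext Lem Xalg injective}; vanishing on $L(\lambda)_{\mu}$ via Lemma~\ref{K-ext Lem Y maps into}; and Lemma~\ref{K-ext Lem Xalpha injective}(ii), replaced by its squared analogue with $(X_{12}^{\op{alg}})^2$ when $f=1$.

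One small correction concerns the candidate with $j_1=j_0-1$, and also the case $f=1$. There $\nu_{j_0-1}+p\alpha_{12}=\lambda_{j_0-1}-p\alpha_{23}\leq\lambda_{j_0-1}$, so your shortcut ``$\nu_{j_0-1}+p\alpha_{12}\nleq\lambda_{j_0-1}$'' fails. The vanishing hypothesis of Lemma~\ref{K-ext Lem Xalg injective} must instead be checked as the paper does, via the middle coordinate: $\lambda_{j_0-1,2}-i<\lambda_{j_0-1,3}$ for all $i\geq p$.
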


\begin{proof}
    We give a proof for $\alpha=\alpha_{12}$, the case $\alpha=\alpha_{23}$ being symmetric. We write $\mu\eqdef\lambda-\alpha_{13,j_0}$.
    
    Let $\nu\in X(\un{T})$ such that $0\neq L(\lambda)_{\nu}\subseteq F(\lambda)^{\lambda-\alpha_{13,j_0}}$. Then as in \eqref{K-ext Eq Lem lambda-alpha12 1} there exists $\un{c}_1,\un{c}_2\in\ZZ^f$ such that
    \begin{equation}\label{K-ext Eq Lem lambda-alpha13 1}
        \nu=\lambda-\alpha_{13,j_0}-\ssum_{j=0}^{f-1}\bigbra{\bra{pc_{1,j}-c_{1,j-1}}\alpha_{12,j}+\bra{pc_{2,j}-c_{2,j-1}}\alpha_{23,j}}.
    \end{equation}
    Since $\lambda_{j,3}\leq\nu_{j,1},\nu_{j,2},\nu_{j,3}\leq\lambda_{j,1}$ for all $j$ and using $\lambda_{j_0-1,1}-\lambda_{j_0-1,2},\lambda_{j_0-1,2}-\lambda_{j_0-1,3}\leq p-2$, a similar analysis as in the proof of Lemma \ref{K-ext Lem lambda-alpha12} shows that $0\leq c_{1,j}=c_{2,j}\leq1$ for all $j$, and $c_{1,j}=0$ implies $c_{1,j-1}=0$ for all $j\neq j_0$. 

    For $j_1\in\cJ$, we let $\nu^{j_1}\in X_1(\un{T})$ be as in \eqref{K-ext Eq Lem lambda-alpha13 1} with $c_{1,j}=c_{2,j}=1$ for $j=j_1,j_1+1,\ldots,j_0-1$ and $c_{1,j}=c_{2,j}=0$ otherwise. More explicitly, we have
    \begin{equation*}
        \nu^{j_1}_j=
    \begin{cases}
        \lambda_j-(p-1+\delta_{j=j_1})\alpha_{13}&\text{if}~j=j_1,j_1+1,\ldots,j_0-1\\
        \lambda_j&\text{otherwise}
    \end{cases}
    \end{equation*}
    Then the previous paragraph shows that
    \begin{equation*}
        w\in L(\lambda)_{\mu}\oplus\bigbra{\!\oplus_{j_1\in\cJ}L(\lambda)_{\nu^{j_1}}}.
    \end{equation*}

    First we suppose that $f\geq2$. Let $j_1\in\cJ$. Then we have 
    \begin{equation*}
        \ang{\nu^{j_1}_{j_0-1},\alpha_{12}^{\vee}}\leq\ang{\lambda_{j_0-1}-(p-1)\alpha_{13},\alpha_{12}^{\vee}}=\lambda_{j_0-1,1}-\lambda_{j_0-1,2}-(p-1)<0.
    \end{equation*}
    We also have $L(\lambda_{j_0-1})_{\nu^{j_1}_{j_0-1}+i\alpha_{12}}=0$ for $i\geq p$ since $\bigbra{\nu^{j_1}_{j_0-1}+i\alpha_{12}}_2=\lambda_{j_0-1,2}-i\leq\lambda_{j_0-1,3}-1$. Hence the map $X_{12}^{\op{alg}}:L(\lambda_{j_0-1})_{\nu^{j_1}_{j_0-1}}\to L(\lambda_{j_0-1})_{\nu^{j_1}_{j_0-1}+\alpha_{12}}$ is injective by Lemma \ref{K-ext Lem Xalg injective}. Moreover, by Lemma \ref{K-ext Lem Y maps into} and using $f\geq2$ (hence $j_0-1\neq j_0$) we have $X_{12,j_0-1}|_{L(\lambda)_{\mu}}=0$. Then by Lemma \ref{K-ext Lem Xalpha injective}(ii) applied to $\bigset{\bra{\alpha_{13},j_0-1,\nu^{j_1}}:j_1\in\cJ}$ we conclude that $w\in L(\lambda)_{\mu}$.

    In the case $f=1$, the map $\bigbra{X_{12}^{\op{alg}}}^2:L(\lambda)_{\nu^0}\to L(\lambda)_{\nu^0+2\alpha_{12}}$ is injective by the second part of Lemma \ref{K-ext Lem Xalg injective}. Moreover, by Lemma \ref{K-ext Lem Y maps into} we have $X_{12}^2|_{L(\lambda)_{\mu}}=0$. Hence we conclude that $w\in L(\lambda)_{\mu}$ as in the proof of Lemma \ref{K-ext Lem Xalpha injective}(i).
\end{proof}

\begin{definition}\label{K-ext Def Calpha}
    Suppose that $f=1$. We define the following subsets of $X_1(T)$ (see Remark \ref{K-ext Rk Calpha+} for $C(\alpha)^+$):
    \begin{equation*}
    \begin{aligned}
        C(\alpha_{13},0)&\eqdef\bigset{\lambda\in C(\alpha_{13})^+:\min\set{\lambda_1-\lambda_2,\lambda_2-\lambda_3}\leq p-4};\\
        C(\alpha_{12},0)&\eqdef\bigset{\lambda\in C(\alpha_{12})^+:\lambda_1-\lambda_2\leq p-4};\\
        C(\alpha_{23},0)&\eqdef\bigset{\lambda\in C(\alpha_{23})^+:\lambda_2-\lambda_3\leq p-4};\\
        C(0,0)&\eqdef\bigset{\lambda\in X_1(T):\lambda_1-\lambda_2\leq p-3,~\lambda_2-\lambda_3\leq p-3~\text{and}~\lambda_1-\lambda_3\neq p-2}.
    \end{aligned}
    \end{equation*}
\end{definition}

\begin{definition}\label{K-ext Def Calphaj0}
    Suppose that $f\geq2$. Let $j_0\in\cJ$. We define the following subsets of $X_1(\un{T})$:
    \begin{equation*}
    \begin{aligned}
        C(\alpha_{13},j_0)&\eqdef\bigset{\lambda\in X_1(\un{T}):\lambda_{j_0}\in C(\alpha_{13})^+~\text{and}~\lambda_{j_0-1,1}-\lambda_{j_0-1,3}\leq 2p-3};\\
        C(\alpha_{12},j_0)&\eqdef\sset{\lambda\in X_1(\un{T}):\lambda_{j_0}\in C(\alpha_{12})^+~\text{and}~\lambda_{j_0-1,1}-\lambda_{j_0-1,2}\leq p-2}\\
        &\hspace{3em}-\bigset{\lambda\in X_1(\un{T}):\lambda_{j_0}|_{\SL_3}=(p-3,1)~\text{and}~\lambda_{j,1}-\lambda_{j,3}\geq p-1~\forall\,j\neq j_0};\\
        C(\alpha_{23},j_0)&\eqdef\sset{\lambda\in X_1(\un{T}):\lambda_{j_0}\in C(\alpha_{23})^+~\text{and}~\lambda_{j_0-1,2}-\lambda_{j_0-1,3}\leq p-2}\\
        &\hspace{3em}-\bigset{\lambda\in X_1(\un{T}):\lambda_{j_0}|_{\SL_3}=(1,p-3)~\text{and}~\lambda_{j,1}-\lambda_{j,3}\geq p-1~\forall\,j\neq j_0};\\
        C(0,j_0)&\eqdef\sset{\lambda\in X_1(\un{T}):\lambda_{j,1}-\lambda_{j,2}\leq p-2~\text{and}~\lambda_{j,2}-\lambda_{j,3}\leq p-2~\forall\,j}.
    \end{aligned}
    \end{equation*}
\end{definition}

\begin{lemma}\label{K-ext Lem character-weight same}
    Let $\alpha\in\Phi^+\cup\set{0}$, $j_0\in\cJ$ and $\lambda\in C(\alpha,j_0)$. Let $\mu=\lambda+\alpha_{j_0}+\beta_{j_0}$ with $\beta\in\Phi^+$. Then we have $F(\lambda)^{\mu}=L(\lambda)_{\mu-(p-1)\un{\alpha_{13}}}$.
\end{lemma}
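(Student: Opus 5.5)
The plan is to mimic the proofs of Lemmas \ref{K-ext Lem lambda-alpha12} and \ref{K-ext Lem lambda-alpha13}, i.e.\ to classify all weights $\nu$ with $0\neq L(\lambda)_{\nu}\subseteq F(\lambda)^{\mu}$. Since $F(\lambda)^{\mu}=\oplus_{\nu}L(\lambda)_{\nu}$ with $\nu$ running over weights with $\chi_{\nu}=\chi_{\mu}$, it suffices to show that the only such $\nu$ with $L(\lambda)_{\nu}\neq0$ is $\nu=\mu-(p-1)\un{\alpha_{13}}$. Note that $(p-1)\un{\alpha_{13}}=(p-\pi)\un{\alpha_{13}}$, so this weight does lie in the right $H$-eigenspace.

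Since $\nu-\mu$ lies in the root lattice and in $(p-\pi)X(\un{T})$, we write, exactly as in \eqref{K-ext Eq Lem lambda-alpha12 1},
\begin{equation*}
\nu=\lambda+\alpha_{j_0}+\beta_{j_0}-\ssum_{j=0}^{f-1}\bigbra{(pc_{1,j}-c_{1,j-1})\alpha_{12,j}+(pc_{2,j}-c_{2,j-1})\alpha_{23,j}}
\end{equation*}
with $\un{c}_1,\un{c}_2\in\ZZ^f$. The goal is to show $\un{c}_1=\un{c}_2=\un{1}$.

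The constraints come from the inequalities $\lambda_{j,3}\leq\nu_{j,i}\leq\lambda_{j,1}$ for all $j$ and $i$, together with $\nu_j\leq\lambda_j$. First, $\nu_{j,1}\leq\lambda_{j,1}$ and $\nu_{j,3}\geq\lambda_{j,3}$ force $pc_{1,j}-c_{1,j-1}\geq\delta_{j=j_0}\cdot(\text{coefficient of }\alpha_{12}\text{ in }\alpha+\beta)$, and similarly for $c_2$. At $j_0$ this coefficient is positive for at least one of the two simple roots, since $\alpha+\beta\neq0$ with $\beta\in\Phi^+$. Second, $\nu_{j,3}\leq\lambda_{j,1}$ and $\nu_{j,1}\geq\lambda_{j,3}$ bound $pc_{i,j}-c_{i,j-1}\leq\lambda_{j,1}-\lambda_{j,3}+(\text{small correction at }j_0)\leq 2p-2$. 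The averaging formula $c_{i,j}=\bigl[\sum_k p^{f-1-k}(pc_{i,j-k}-c_{i,j-k-1})\bigr]/(q-1)$ then gives $0\leq c_{i,j}\leq 1$. The positivity at $j_0$ excludes $\un{c}_i=\un{0}$, and propagating the conditions $pc_{i,j}-c_{i,j-1}\geq0$ around the cycle $\cJ$ then forces the full vector $\un{1}$. The hypotheses of $C(\alpha,j_0)$ enter here. The bound at $j_0-1$ (e.g.\ $\lambda_{j_0-1,1}-\lambda_{j_0-1,3}\leq 2p-3$, or $\lambda_{j_0-1,1}-\lambda_{j_0-1,2}\leq p-2$) rules out a ``carry'' pattern in which $c_{i,j_0-1}$ and $c_{i,j_0}$ differ in the wrong direction. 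The membership $\lambda_{j_0}\in C(\alpha)^+$ rules out solutions with $c_{i,j_0}=0$ at $j_0$, using that $\lambda_{j_0}+\alpha+\beta$ minus a root multiple must still be a weight of $L(\lambda_{j_0})$. For $\nu_{j,2}$ one compares $c_1-c_2$ as in Lemma \ref{K-ext Lem lambda-alpha12} to get $c_{1,j}=c_{2,j}$.

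The main obstacle is the case-by-case analysis over $\alpha\in\Phi^+\cup\{0\}$ and $\beta\in\Phi^+$: nine combinations. In each one must check that the boundary configurations (where some $pc_{i,j}-c_{i,j-1}$ equals $-1$ or $2p-1$) are genuinely excluded. This is exactly where the precise shape of $C(\alpha)^+$ matters, including the excluded sets with $\lambda_{j_0}|_{\SL_3}=(p-3,1)$ in $C(\alpha_{12},j_0)$, since there a mixed solution such as $c_{1}=\un{1}$, $c_{2}=\un{0}$ could survive when all $\lambda_{j,1}-\lambda_{j,3}\geq p-1$. I would first treat $\alpha=\alpha_{13}$ in full, then handle $\alpha_{12}$ and $\alpha_{23}$ symmetrically, and finally $\alpha=0$, where the constraint $\lambda_{j,1}-\lambda_{j,2},\lambda_{j,2}-\lambda_{j,3}\leq p-2$ for all $j$ makes the bounds uniform.

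Once $\un{c}_1=\un{c}_2=\un{1}$ is established, we get $\nu=\mu-(p-1)\un{\alpha_{13}}$, and the equality $F(\lambda)^{\mu}=L(\lambda)_{\mu-(p-1)\un{\alpha_{13}}}$ follows.
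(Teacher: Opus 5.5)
Your proposal is correct and is essentially the paper's proof. You parametrize $\nu$ by $\un{c}_1,\un{c}_2$ and use $\lambda_{j,3}\leq\nu_{j,1},\nu_{j,3}\leq\lambda_{j,1}$ with the averaging formula to get $\un{c}_i\in\set{\un{0},\un{1}}$ (the strict bound $2p-3$ at $j_0-1$ is what gives $c_{i,j}\leq1$); you then rule out $\un{c}_1=\un{0}$ (which forces $\alpha_1=0$, hence $\lambda_{j_0-1,2}-\lambda_{j_0-1,3}\leq p-2$) via $\nu_{j_0-1,2}\geq\lambda_{j_0-1,3}$, which forces $\un{c}_2=\un{0}$ and $\nu=\mu$, impossible as $L(\lambda)_{\mu}=0$, and symmetrically for $\un{c}_2=\un{0}$. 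Three corrections: no case split over the nine pairs $(\alpha,\beta)$ is needed; the hypothesis at $j_0$ enters only through $\lambda_{j_0,1}-\lambda_{j_0,3}\leq 2p-4$, which gives the upper bound $2p-2$ at $j_0$; and the excluded $(p-3,1)$ locus of $C(\alpha_{12},j_0)$ plays no role here, because the mixed solution $\un{c}_1=\un{1}$, $\un{c}_2=\un{0}$ is already killed by $\lambda_{j_0-1,1}-\lambda_{j_0-1,2}\leq p-2$ (that exclusion is needed for Lemma \ref{K-ext Lem Yj0 isomorphism}).
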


\begin{proof}
    We give a proof in the case $f\geq2$, the case $f=1$ being similar and simpler. 
    
    Let $\nu\in X(\un{T})$ such that $0\neq L(\lambda)_{\nu}\subseteq F(\lambda)^{\mu}$. Then as in \eqref{K-ext Eq Lem lambda-alpha12 1} there exists $\un{c}_1,\un{c}_2\in\ZZ^f$ such that
    \begin{equation}\label{K-ext Eq Lem character-weight same 1}
        \nu=\mu-\ssum_{j=0}^{f-1}\bigbra{\bra{pc_{1,j}-c_{1,j-1}}\alpha_{12,j}+\bra{pc_{2,j}-c_{2,j-1}}\alpha_{23,j}}.
    \end{equation}
    
    Since $\lambda_{j,3}\leq\nu_{j,1}\leq\lambda_{j,1}$ for all $j$, we must have (using $\lambda\in C(\alpha,j_0)$)
    \begin{equation}\label{K-ext Eq Lem character-weight same 2}
    \begin{aligned}
        &0\leq\alpha_{1}+\beta_{1}\leq pc_{1,j_0}-c_{1,j_0-1}\leq\bra{\lambda_{j_0,1}-\lambda_{j_0,3}+\alpha_{1}}+\beta_{1}\leq(2p-3)+1=2p-2;\\
        &0\leq pc_{1,j}-c_{1,j-1}\leq\lambda_{j,1}-\lambda_{j,3}\leq 2p-2-\delta_{j=j_0-1}\quad\forall\,j\neq{j_0}.
    \end{aligned}
    \end{equation}
    Using $c_{1,j}=\bigbbbra{\sum_{i=0}^{f-1}p^{f-1-i}\bra{pc_{1,j-i}-c_{1,j-i-1}}}/(q-1)$, we deduce that $0\leq c_{1,j}\leq1$ for all $j$. Since $pc_{1,j}-c_{1,j-1}\geq0$ for all $j$, we deduce that either $\un{c}_1=\un{1}$ or $\un{c}_1=\un{0}$.

    Similarly, from $\lambda_{j,3}\leq\nu_{j,3}\leq\lambda_{j,1}$ for all $j$ we must have (using $\lambda\in C(\alpha,j_0)$)
    \begin{equation}\label{K-ext Eq Lem character-weight same 3}
    \begin{aligned}
        &0\leq-\alpha_{3}-\beta_{3}\leq pc_{2,j_0}-c_{2,j_0-1}\leq\bra{\lambda_{j_0,1}-\lambda_{j_0,3}-\alpha_{3}}-\beta_{3}\leq(2p-3)+1=2p-2;\\
        &0\leq pc_{2,j}-c_{2,j-1}\leq\lambda_{j,1}-\lambda_{j,3}\leq 2p-2-\delta_{j=j_0-1}\quad\forall\,j\neq{j_0},
    \end{aligned}
    \end{equation}
    which implies either $\un{c}_2=\un{1}$ or $\un{c}_2=\un{0}$.

    If $\un{c}_1=\un{0}$, then by \eqref{K-ext Eq Lem character-weight same 2} we have $\alpha_{1}=0$, i.e.\;$\alpha=\alpha_{23}$ or $0$, hence $\lambda_{j_0-1,2}-\lambda_{j_0-1,3}\leq p-2$. Since $\nu_{j_0-1,2}\geq\lambda_{j_0-1,3}$, by \eqref{K-ext Eq Lem character-weight same 1} and using $f\geq2$ (hence $j_0-1\neq j_0$) we have 
    \begin{equation*}
        p\bra{c_{1,j_0-1}-c_{2,j_0-1}}-\bra{c_{1,j_0-2}-c_{2,j_0-2}}\geq-\bra{\lambda_{j_0-1,2}-\lambda_{j_0-1,3}}\geq-(p-2).
    \end{equation*}
    Hence we must have $\un{c}_2=\un{0}$ and $\nu=\mu$. This is a contradiction since $L(\lambda)_{\mu}=0$. %Similarly, $\un{c}_2=\un{0}$ also leads to a contradiction.

    Similarly, if $\un{c}_2=\un{0}$, then by \eqref{K-ext Eq Lem character-weight same 3} and $\nu_{j_0-1,2}\leq\lambda_{j_0-1,1}$ we have $\alpha=\alpha_{12}$ or $0$, and
    \begin{equation*}
        p\bra{c_{1,j_0-1}-c_{2,j_0-1}}-\bra{c_{1,j_0-2}-c_{2,j_0-2}}\leq\lambda_{j_0-1,1}-\lambda_{j_0-1,2}\leq(p-2).
    \end{equation*}
    Hence we must have $\un{c}_1=\un{0}$ and $\nu=\mu$. This is a contradiction since $L(\lambda)_{\mu}=0$.

    Hence the only possibility is $\un{c}_1=\un{c}_2=1$ (if such $\nu$ exists), in which case $\nu=\mu-(p-1)\un{\alpha_{13}}$. We conclude that $F(\lambda)^{\mu}=L(\lambda)_{\mu-(p-1)\un{\alpha_{13}}}$.
\end{proof}

\begin{lemma}\label{K-ext Lem character-weight different}
    Let $\alpha\in\Phi^+$, $j_0\in\cJ$ and $\lambda\in C(\alpha,j_0)$. Let $\mu=\lambda+\alpha_{j_0}+\beta_{j_1}$ with $\beta\in\Phi^+$ and $j_1\neq j_0$. Then $F(\lambda)^{\mu}$ is a direct sum of weight spaces $L(\lambda)_{\nu}$ with $\nu_{j_0}=\lambda_{j_0}+\alpha-(p-1)\alpha_{13}$.
\end{lemma}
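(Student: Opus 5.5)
We follow the proof of Lemma \ref{K-ext Lem character-weight same}; here $f\geq2$ since $j_1\neq j_0$. Take $\nu\in X(\un{T})$ with $0\neq L(\lambda)_{\nu}\subseteq F(\lambda)^{\mu}$. As in \eqref{K-ext Eq Lem lambda-alpha12 1}, there are $\un{c}_1,\un{c}_2\in\ZZ^f$ with
\begin{equation*}
    \nu=\mu-\ssum_{j=0}^{f-1}\bigbra{\bra{pc_{1,j}-c_{1,j-1}}\alpha_{12,j}+\bra{pc_{2,j}-c_{2,j-1}}\alpha_{23,j}}.
\end{equation*}
Since $F(\lambda)^{\mu}$ is the direct sum of the weight spaces $L(\lambda)_{\nu}$ it contains, it suffices to show that every such $\nu$ satisfies $\nu_{j_0}=\lambda_{j_0}+\alpha-(p-1)\alpha_{13}$. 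Note that $\mu_{j_0}=\lambda_{j_0}+\alpha$, $\mu_{j_1}=\lambda_{j_1}+\beta$, and $\mu_j=\lambda_j$ otherwise.

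\textbf{Step 1: bounding $c_{1,j}$ and $c_{2,j}$.} We use $\lambda_{j,3}\leq\nu_{j,1},\nu_{j,3}\leq\lambda_{j,1}$. For the first coordinate this gives $\alpha_1\leq pc_{1,j_0}-c_{1,j_0-1}\leq\lambda_{j_0,1}-\lambda_{j_0,3}+\alpha_1$ at $j_0$, and $\beta_1\leq pc_{1,j_1}-c_{1,j_1-1}\leq\lambda_{j_1,1}-\lambda_{j_1,3}+\beta_1$ at $j_1$. At the other indices the bounds are $0$ and $\lambda_{j,1}-\lambda_{j,3}$.

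The shifts $\alpha_1,\beta_1\leq1$ could push an upper bound to $2p-1$. This is the main technical point, and it should be absorbed using the defining conditions of $C(\alpha,j_0)$. For $\alpha_{12},\alpha_{23}$, membership of $\lambda_{j_0}$ in $C(\alpha)^+$ gives $\lambda_{j_0,1}-\lambda_{j_0,3}\leq 2p-4$. For $\alpha_{13}$ we use $\lambda_{j_0}\in C(\alpha_{13})^+$, and the condition on $\lambda_{j_0-1}$ then leaves slack at $j_0-1$. At $j_1$ we can only assume $\lambda_{j_1,1}-\lambda_{j_1,3}\leq 2p-2$, so the upper bound there may be $2p-1$.

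Even so, the formula $c_{1,j}=\bigbbbra{\sum_i p^{f-1-i}(pc_{1,j-i}-c_{1,j-i-1})}/(q-1)$, combined with the strict slack at $j_0$ (or at $j_0-1$), still forces $0\leq c_{1,j}\leq1$. The lower bounds are all $\geq0$, so $\un{c}_1\in\set{\un{0},\un{1}}$. The same argument with $-\alpha_3,-\beta_3$ gives $\un{c}_2\in\set{\un{0},\un{1}}$. I expect this bookkeeping, with a small case split on $\alpha$ and on whether $j_1=j_0-1$, to be the main obstacle.

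\textbf{Step 2: excluding the degenerate cases.} Suppose $\un{c}_1=\un{0}$. The lower bound at $j_0$ forces $\alpha_1=0$, so $\alpha=\alpha_{23}$ because $\alpha\neq0$. This contradicts nothing yet, so we also use $\nu_{j_0,2}\geq\lambda_{j_0,3}$. If $\un{c}_2=\un{1}$, then $\nu_{j_0,2}=\lambda_{j_0,2}+\alpha_2-(p-1)$, which gives $\lambda_{j_0,2}-\lambda_{j_0,3}\geq p-1-\alpha_2=p-2$. Since $\alpha_{23,2}=1$, this contradicts $\lambda_{j_0,2}-\lambda_{j_0,3}\leq p-3$ from $C(\alpha_{23})^+$.

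If instead $\un{c}_2=\un{0}$, then $\nu=\mu$. But $\mu_{j_0}=\lambda_{j_0}+\alpha\nleq\lambda_{j_0}$, so $L(\lambda)_{\mu}=0$, a contradiction. The case $\un{c}_2=\un{0}$ with $\un{c}_1=\un{1}$ is symmetric: it forces $\alpha=\alpha_{12}$, and then $\nu_{j_0,2}\leq\lambda_{j_0,1}$ contradicts $\lambda_{j_0,1}-\lambda_{j_0,2}\leq p-3$.

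For $\alpha=\alpha_{13}$, both lower bounds $\alpha_1=1$ and $-\alpha_3=1$ are already positive. They force $\un{c}_1=\un{c}_2=\un{1}$ directly.

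\textbf{Step 3: conclusion.} In all cases $\un{c}_1=\un{c}_2=\un{1}$, so $\nu=\mu-(p-1)\un{\alpha_{13}}$. In particular $\nu_{j_0}=\lambda_{j_0}+\alpha-(p-1)\alpha_{13}$, as claimed. If one of the cases in Step 1 lets $\un{c}$ fail to be constant away from $j_0$ (a possible subtlety at $j_1$), then the coordinate $\nu_{j_0}$ is still determined by $c_{\ast,j_0}=c_{\ast,j_0-1}=1$. This weaker conclusion is exactly why the statement only fixes $\nu_{j_0}$.
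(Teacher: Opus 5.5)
Your skeleton matches the paper's proof: parametrize $\nu$ by $\un{c}_1,\un{c}_2$, bound them via $\nu_{j,1},\nu_{j,3}$, and exclude the degenerate cases through $\nu_{j_0,2}$ and the bounds $\leq p-3$ from $C(\alpha)^+$. However, the key claim of Step 1, that $\un{c}_1,\un{c}_2\in\{\un{0},\un{1}\}$, is false, and so is the conclusion $\nu=\mu-(p-1)\un{\alpha_{13}}$ in Step 3. The possible upper bound $2p-1$ at $j_1$ genuinely produces a $2$.

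Here is a counterexample. Write $d_j=pc_{1,j}-c_{1,j-1}$ and take $f=3$, $j_0=0$, $j_1=1$, $\alpha=\beta=\alpha_{12}$, with $\lambda_0|_{\SL_3}=(p-3,p-1)$, $\lambda_1|_{\SL_3}=(p-1,p-1)$, $\lambda_2|_{\SL_3}=(p-2,p-1)$. Then $\lambda\in C(\alpha_{12},0)$. The choice $(d_0,d_1,d_2)=(p-1,2p-1,p-2)$ satisfies all your bounds and gives $\un{c}_1=(1,2,1)$. Together with $\un{c}_2=\un{1}$ this yields a weight $\nu$ with $\nu_1=(\lambda_{1,3},\lambda_{1,1},\lambda_{1,2})\neq\mu_1-(p-1)\alpha_{13}$. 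Moreover $L(\lambda)_{\nu}\neq0$: all three $\lambda_j$ lie in $C(2|3)$, so $L(\lambda_j)=V(\lambda_j)$, and each $\nu_j$ has a dominant conjugate $\leq\lambda_j$. So $F(\lambda)^{\mu}$ is genuinely not a single weight space, which is exactly why the lemma only pins down $\nu_{j_0}$. Your Step 3 remark asserts $c_{\ast,j_0}=c_{\ast,j_0-1}=1$ in such cases, but nothing in your argument proves it, and it contradicts what you claimed in Step 1.

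What is needed, and what the paper proves, is the weaker dichotomy: either $\un{c}=\un{0}$, or $c_j\geq1$ for all $j$ and $c_{j_0}=c_{j_0-1}=1$.

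\begin{itemize}
\item The first alternative comes from positivity: all $d_j\geq0$, so $c_j=0$ forces $c_{j-1}=0$.
\item For $c_{j_0}$, use $c_{j_0}(q-1)=\sum_{i=0}^{f-1}p^{f-1-i}d_{j_0-i}$. The top-weighted term satisfies $d_{j_0}\leq\lambda_{j_0,1}-\lambda_{j_0,3}+\alpha_1\leq2p-3$, a deficit of $p^{f-1}$ relative to $2(q-1)$. The possible excess at $j_1\neq j_0$ is weighted by at most $p^{f-2}$. Hence $c_{j_0}\leq1$.
\item For $c_{j_0-1}$, the same argument works using $\lambda_{j_0-1,1}-\lambda_{j_0-1,3}\leq2p-3$, which holds for all three $\alpha$ by the definition of $C(\alpha,j_0)$. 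If $j_1=j_0-1$, this inequality also caps $d_{j_1}$ at $2p-2$, so there is no excess at all.
\end{itemize}

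Your Step 2 only uses the values of $\un{c}_1,\un{c}_2$ at $j_0$ and $j_0-1$, so it goes through verbatim with this dichotomy. Step 3 then yields exactly $\nu_{j_0}=\lambda_{j_0}+\alpha-(p-1)\alpha_{13}$, which is the statement.
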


\begin{proof}
    Let $\nu\in X(\un{T})$ such that $0\neq L(\lambda)_{\nu}\subseteq F(\lambda)^{\mu}$. Then as in \eqref{K-ext Eq Lem lambda-alpha12 1} there exists $\un{c}_1,\un{c}_2\in\ZZ^f$ such that
    \begin{equation}\label{K-ext Eq Lem character-weight different 1}
        \nu=\mu-\ssum_{j=0}^{f-1}\bigbra{\bra{pc_{1,j}-c_{1,j-1}}\alpha_{12,j}+\bra{pc_{2,j}-c_{2,j-1}}\alpha_{23,j}}.
    \end{equation}
    
    Since $\lambda_{j,3}\leq\nu_{j,1}\leq\lambda_{j,1}$ for all $j$, we must have (using $\lambda\in C(\alpha,j_0)$)
    \begin{equation*}
    \begin{aligned}
        &0\leq\alpha_{1}\leq pc_{1,j_0}-c_{1,j_0-1}\leq\lambda_{j_0,1}-\lambda_{j_0,3}+\alpha_{1}\leq 2p-3;\\
        &0\leq\beta_{1}\leq pc_{1,j_1}-c_{1,j_1-1}\leq\bra{\lambda_{j_1,1}-\lambda_{j_1,3}}+\beta_{1}\leq 2p-1-\delta_{j_1=j_0-1};\\
        &0\leq pc_{1,j}-c_{1,j-1}\leq\lambda_{j,1}-\lambda_{j,3}\leq 2p-2-\delta_{j=j_0-1}\quad\forall\,j\neq{j_0,j_1}.
    \end{aligned}
    \end{equation*}
    Using $c_{1,j}=\bigbbbra{\sum_{i=0}^{f-1}p^{f-1-i}\bra{pc_{1,j-i}-c_{1,j-i-1}}}/(q-1)$ and $pc_{1,j}-c_{1,j-1}\geq0$ for all $j$, we deduce that either $\un{c}_1=\un{0}$ (which implies $\alpha_1=0$, hence $\alpha=\alpha_{23}$), or $1\leq c_{1,j}\leq2$ for all $j$ and $c_{1,j_0}=c_{1,j_0-1}=1$.

    Similarly, from $\lambda_{j,3}\leq\nu_{j,3}\leq\lambda_{j,1}$ for all $j$ we deduce that either $\un{c}_2=\un{0}$ (which implies $\alpha=\alpha_{12}$), or $1\leq c_{1,j}\leq2$ for all $j$ and $c_{1,j_0}=c_{1,j_0-1}=1$.

    If $\un{c}_{1}=\un{0}$, then $\alpha=\alpha_{23}$, and from $\nu_{j_0-1,2}\geq\lambda_{j_0-1,3}$ we have (using $\lambda\in C(\alpha,j_0)$)
    \begin{equation*}
        p\bra{c_{1,j_0}-c_{2,j_0}}-\bra{c_{1,j_0-1}-c_{2,j_0-1}}\geq-1-\bra{\lambda_{j_0,2}-\lambda_{j_0,3}}\geq-1-(p-3)=-(p-2).
    \end{equation*}
    Hence we must have $\un{c}_2=\un{0}$ and $\nu=\mu$. This is a contradiction since $L(\lambda)_{\mu}=0$. Similarly, $\un{c}_2=\un{0}$ also leads to a contradiction.

    Hence we must have $c_{1,j_0}=c_{1,j_0-1}=c_{2,j_0}=c_{2,j_0-1}=1$ (if such $\nu$ exists). By \eqref{K-ext Eq Lem character-weight different 1} we conclude that $\nu_{j_0}=\lambda_{j_0}+\alpha-(p-1)\alpha_{13}$.
\end{proof}

\begin{lemma}\label{K-ext Lem Yj0 isomorphism}
    Let $\alpha\in\Phi^+\cup\set{0}$, $j_0\in\cJ$ and $\lambda\in C(\alpha,j_0)$. Let $\nu=\lambda+\alpha_{j_0}-(p-1)\un{\alpha_{13}}$. Then $X_{13,j_0}$ induces an isomorphism $L(\lambda)_{\nu}\to L(\lambda)_{\nu+\alpha_{13,j_0}}$ unless $\alpha=0$, $\lambda_{j_0,1}-\lambda_{j_0,3}=p-2$ and $\lambda_{j,1}-\lambda_{j,3}\geq p-1~\forall\,j\neq j_0$.
\end{lemma}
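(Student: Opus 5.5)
The plan is to reduce the group operator $X_{13,j_0}$ to the algebraic operator $X_{13}^{\op{alg}}$ acting in the $j_0$-th tensor factor, and then to apply Lemma \ref{K-ext Lem Y isomorphism} factor by factor. Since $L(\lambda)_{\nu}=\otimes_{j}L(\lambda_j)_{\nu_j}$, I would first check the hypothesis of Lemma \ref{K-ext Lem Y maps into} for $\alpha_{13}$: we need $L(\lambda_j)_{\nu_j+i\alpha_{13}}=0$ for all $i\geq p$ and all $j$. For $j\neq j_0$ we have $\nu_j=\lambda_j-(p-1)\alpha_{13}$, so $\nu_j+i\alpha_{13}=\lambda_j+(i-p+1)\alpha_{13}>\lambda_j$ when $i\geq p$. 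For $j=j_0$ we have $\nu_{j_0}+i\alpha_{13}=\lambda_{j_0}+\alpha+(i-p+1)\alpha_{13}\nleq\lambda_{j_0}$. Hence $X_{13,j_0}=-X_{13}^{(j_0),\op{alg}}$ on $L(\lambda)_{\nu}$. In particular $X_{13,j_0}$ maps $L(\lambda)_\nu$ into $L(\lambda)_{\nu+\alpha_{13,j_0}}$ and acts as $\id\otimes\cdots\otimes X_{13}^{\op{alg}}\otimes\cdots\otimes\id$. It therefore suffices to show that $X_{13}^{\op{alg}}:L(\lambda_{j_0})_{\nu_{j_0}}\to L(\lambda_{j_0})_{\nu_{j_0}+\alpha_{13}}$ is an isomorphism whenever the other factors $L(\lambda_j)_{\lambda_j-(p-1)\alpha_{13}}$ are all nonzero. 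If some other factor vanishes, both sides are $0$ and there is nothing to prove.

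Next I would apply Lemma \ref{K-ext Lem Y isomorphism} to $\lambda_{j_0}$ with $\nu_{j_0}=\lambda_{j_0}+\alpha-(p-1)\alpha_{13}$. This needs three hypotheses:
\begin{enumerate}
\item $\lambda_{j_0}\in C(\alpha)^+$, which is built into $C(\alpha,j_0)$.
\item $\lambda_{j_0,1}-\lambda_{j_0,3}\neq p-2$ when $\alpha\neq\alpha_{13}$.
\item For $\alpha=0$, $\lambda_{j_0,1}-\lambda_{j_0,2},\lambda_{j_0,2}-\lambda_{j_0,3}\leq p-2$, which holds by the definition of $C(0,j_0)$.
\end{enumerate}
Condition (ii) is where the bookkeeping lies, and I would handle it case by case.

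\textbf{Case $\alpha=\alpha_{12}$ or $\alpha_{23}$.} Inspecting Remark \ref{K-ext Rk Calpha+}, $C(\alpha_{12})^+$ excludes $\lambda_1-\lambda_3=p-2$ except for $\lambda|_{\SL_3}=(p-3,1)$. That shape is removed from $C(\alpha_{12},j_0)$ only when $\lambda_{j,1}-\lambda_{j,3}\geq p-1$ for all $j\neq j_0$. So in the remaining bad case some $j\neq j_0$ has $\lambda_{j,1}-\lambda_{j,3}\leq p-2$. Then $L(\lambda_j)_{\lambda_j-(p-1)\alpha_{13}}=0$, because its third coordinate $\lambda_{j,3}+p-1$ exceeds $\lambda_{j,1}$. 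Both weight spaces of $L(\lambda)$ are therefore zero and the map is trivially an isomorphism. The case $\alpha_{23}$ is symmetric.

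\textbf{Case $\alpha=0$.} If $\lambda_{j_0,1}-\lambda_{j_0,3}\neq p-2$, we apply Lemma \ref{K-ext Lem Y isomorphism} directly. If it equals $p-2$, the same vanishing argument works exactly when some $j\neq j_0$ has $\lambda_{j,1}-\lambda_{j,3}\leq p-2$. The only configuration left is the stated exception.

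\textbf{Case $\alpha=\alpha_{13}$.} There is no restriction, and Lemma \ref{K-ext Lem Y isomorphism} applies directly.

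The main obstacle is making sure the vanishing argument and the exclusions in Definition \ref{K-ext Def Calphaj0} exactly match the hypotheses of Lemma \ref{K-ext Lem Y isomorphism}. In particular, when $f=1$ the "other factors" do not exist, and we must rely on the stricter sets $C(\alpha,0)$ of Definition \ref{K-ext Def Calpha}, which rule out $\lambda_1-\lambda_3=p-2$ for $\alpha=0$. The rest is the routine check that tensoring an isomorphism with identity maps on the fixed nonzero factors yields an isomorphism.
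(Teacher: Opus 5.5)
Your proposal is correct and follows essentially the same route as the paper. When some $j\neq j_0$ has $\lambda_{j,1}-\lambda_{j,3}\leq p-2$, both weight spaces vanish; otherwise you replace $X_{13,j_0}$ by $-X_{13}^{(j_0),\op{alg}}$ via Lemma \ref{K-ext Lem Y maps into} and conclude with Lemma \ref{K-ext Lem Y isomorphism}, and your explicit hypothesis check (in particular, that the surviving $(p-3,1)$, resp.\ $(1,p-3)$, shapes in $C(\alpha_{12},j_0)$, resp.\ $C(\alpha_{23},j_0)$, land in the vanishing case) fills in a step the paper leaves implicit.
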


\begin{proof}
    If $\lambda_{j,1}-\lambda_{j,3}\leq p-2$ for some $j\neq j_0$, then we have $L(\lambda_{j})_{\nu_j}=0$, hence $L(\lambda)_{\nu}=L(\lambda)_{\nu+\alpha_{13,j_0}}=0$. Otherwise, by Lemma \ref{K-ext Lem Y maps into}, we have $X_{13,j_0}=-X_{13}^{(j_0),\op{alg}}$. Hence it suffices to show that the map
    \begin{equation*}
        X_{13}^{\op{alg}}:L(\lambda_{j_0})_{\nu_{j_0}}\to L(\lambda_{j_0})_{\nu_{j_0}+\alpha_{13}}
    \end{equation*}
    is an isomorphism, which follows from Lemma \ref{K-ext Lem Y isomorphism}. 
\end{proof}

\begin{lemma}\label{K-ext Lem Yj0 injective}
    Let $\alpha\in\Phi^+\cup\set{0}$ and $j_0\in\cJ$. Let $\lambda\in C(\alpha,j_0)$ if $\alpha\neq0$ and $\lambda\in X_1(\un{T})$ with $\lambda_{j_0,1}-\lambda_{j_0,3}\leq 2p-3$ if $\alpha=0$. Given a set of weights $\nu\in X(\un{T})$ each satisfying $\nu_{j_0}=\lambda_{j_0}+\alpha-(p-1)\alpha_{13}$. Then the map $X_{13,j_0}:\oplus_{\nu}L(\lambda)_{\nu}\to L(\lambda)$ is injective.
\end{lemma}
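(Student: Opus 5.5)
The plan is to reduce to Lemma \ref{K-ext Lem Xalpha injective}(ii), applied with $\alpha^k=\alpha_{13}$ and $j_k=j_0$ for every weight $\nu$ in the given set. The weights are distinct, so condition (a) holds. It therefore suffices to check condition (b): for each $\nu$ in the set, the map
\begin{equation*}
    X_{13}^{\op{alg}}:L(\lambda_{j_0})_{\nu_{j_0}}\to L(\lambda_{j_0})_{\nu_{j_0}+\alpha_{13}}
\end{equation*}
is injective. Since every $\nu$ has the same $j_0$-component $\nu_{j_0}=\lambda_{j_0}+\alpha-(p-1)\alpha_{13}$, this is a single statement about $L(\lambda_{j_0})$. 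If $L(\lambda_{j_0})_{\nu_{j_0}}=0$ there is nothing to prove, so we may assume it is nonzero.

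To prove that injectivity, I would use Lemma \ref{K-ext Lem Xalg injective} with the root $\alpha_{13}$ and the weight $\nu_{j_0}$. The first hypothesis is
\begin{equation*}
    \ang{\nu_{j_0},\alpha_{13}^{\vee}}=(\lambda_{j_0,1}-\lambda_{j_0,3})+\ang{\alpha,\alpha_{13}^{\vee}}-(2p-2)<0.
\end{equation*}
For the second hypothesis, note that $\nu_{j_0}+i\alpha_{13}\nleq\lambda_{j_0}$ for $i\geq p$, because $\nu_{j_0}+p\alpha_{13}=\lambda_{j_0}+\alpha+\alpha_{13}>\lambda_{j_0}$. So the second hypothesis is automatic.

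It remains to check the sign inequality case by case.
\begin{itemize}
    \item If $\alpha=0$, the hypothesis $\lambda_{j_0,1}-\lambda_{j_0,3}\leq 2p-3$ gives exactly $\ang{\nu_{j_0},\alpha_{13}^{\vee}}\leq -1$.
    \item If $\alpha\in\{\alpha_{12},\alpha_{23}\}$, then $\ang{\alpha,\alpha_{13}^{\vee}}=1$. Since $\lambda\in C(\alpha,j_0)$, we have $\lambda_{j_0}\in C(\alpha)^+$, which forces $\lambda_{j_0,1}-\lambda_{j_0,3}\leq 2p-4$. Hence the pairing is at most $-1$.
    \item If $\alpha=\alpha_{13}$, then $\ang{\alpha,\alpha_{13}^{\vee}}=2$, and $C(\alpha_{13})^+$ gives $\lambda_{j_0,1}-\lambda_{j_0,3}\leq 2p-4$. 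The pairing is negative except in the boundary case $\lambda_{j_0,1}-\lambda_{j_0,3}=2p-4$. Within $C(\alpha_{13})^+$ this happens only when $\lambda_{j_0}|_{\SL_3}=(p-2,p-2)$, where the pairing equals $0$.
\end{itemize}

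The boundary case $\lambda_{j_0}|_{\SL_3}=(p-2,p-2)$ with $\alpha=\alpha_{13}$ is the main obstacle. Here I would reuse the argument from the proof of Lemma \ref{K-ext Lem Y isomorphism}, which treats exactly this situation. Via the proof of \cite[Prop.~3.1.2]{HLM17}, $X_{13}^{\op{alg}}$ on the dual Weyl module $W(\lambda_{j_0})$ has a one-dimensional kernel at this weight, and that kernel is not contained in $\soc_G W(\lambda_{j_0})=L(\lambda_{j_0})$. Hence $X_{13}^{\op{alg}}$ is injective on $L(\lambda_{j_0})_{\nu_{j_0}}$.

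With injectivity in hand for all cases, Lemma \ref{K-ext Lem Xalpha injective}(ii) shows that any $v\in\oplus_\nu L(\lambda)_\nu$ with $X_{13,j_0}v=0$ must be $0$. This is exactly the claimed injectivity of $X_{13,j_0}$.
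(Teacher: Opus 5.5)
Your proposal is correct and follows the paper's proof. You reduce via Lemma \ref{K-ext Lem Xalpha injective}(ii) to injectivity of $X_{13}^{\op{alg}}$ on $L(\lambda_{j_0})_{\nu_{j_0}}$, obtain this from Lemma \ref{K-ext Lem Xalg injective} whenever $\ang{\nu_{j_0},\alpha_{13}^{\vee}}\leq-1$, and treat the boundary case $\alpha=\alpha_{13}$, $\lambda_{j_0}|_{\SL_3}=(p-2,p-2)$ separately. The only cosmetic difference is that you re-run the dual Weyl module argument for that case, whereas the paper simply cites Lemma \ref{K-ext Lem Y isomorphism}, whose hypotheses hold there.
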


\begin{proof}
    By Lemma \ref{K-ext Lem Xalpha injective}(ii), it suffices to show that each $X_{13}^{\op{alg}}:L(\lambda_{j_0})_{\nu_{j_0}}\to L(\lambda_{j_0})_{\nu_{j_0}+\alpha_{13}}$ is injective. Indeed, if $\alpha=\alpha_{13}$ and $\lambda_{j_0}|_{\SL_3}=(p-2,p-2)$, then the injectivity follows from Lemma \ref{K-ext Lem Y isomorphism}. In the rest of the cases, we have $\nu_{j_0}+i\alpha_{13}\nleq\lambda_{j_0}$ for $i\geq p$, and
    \begin{equation*}
        \ang{\nu_{j_0},\alpha_{13}^{\vee}}=\lambda_{j_0,1}-\lambda_{j_0,3}+\ang{\alpha,\alpha_{13}^{\vee}}-(2p-2)\leq-1.
    \end{equation*}
    Then the injectivity follows from Lemma \ref{K-ext Lem Xalg injective}.
\end{proof}

\section{The \texorpdfstring{$K$}{.}-extensions between Serre weights}\label{K-ext sec argument}

We keep the notation of the previous sections and introduce more notation.

We write $K\eqdef\GL_3(\OL)$. Let $I\eqdef\set{A\in K:(A\,\mod\,p)\in B(\Fq)}$ be the Iwahori subgroup. Let $I_1\eqdef\set{A\in K:(A\,\mod\,p)\in U(\Fq)}$ be the pro-$p$ Iwahori subgroup. Let $K_1\eqdef\op{I}_3+p\Mat_3(\OL)$ be the first congruence subgroup. Let $T_1\eqdef T(1+p\OL)$. Let $Z_1\cong1+p\OL$ be the center of $K_1$. Let $K_2\eqdef\op{I}_3+p^2\Mat_3(\OL)=K_1^p$. In particular, for $\lambda\in X_1(\un{T})$ we have $F(\lambda)^{I_1}=L(\lambda)_{\lambda}$, which is 1-dimensional over $\FF$.

\begin{theorem}\label{K-ext Thm Calpha}
    Let $\alpha\in\Phi^+\cup\set{0}$, $j_0\in\cJ$ and $\lambda\in C(\alpha,j_0)$ (see Definition \ref{K-ext Def Calpha} and Definition \ref{K-ext Def Calphaj0}). We write $\sigma=F(\lambda)$ and $\tau=F(\lambda+\alpha_{j_0})$. Then we have
    \begin{equation*}
        \Ext^1_{K/Z_1}(\tau,\sigma)\cong\Ext^1_{\GL_3(\Fq)}(\tau,\sigma).
    \end{equation*}
\end{theorem}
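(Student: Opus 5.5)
Since $K_1$ is the kernel of $K\to\GL_3(\Fq)$, the plan is to compare the two extension groups through the inflation--restriction sequence for $1\to K_1/Z_1\to K/Z_1\to\GL_3(\Fq)\to1$:
\begin{equation*}
0\to\Ext^1_{\GL_3(\Fq)}(\tau,\sigma)\to\Ext^1_{K/Z_1}(\tau,\sigma)\to\Hom_{\GL_3(\Fq)}\bigbra{\tau,H^1(K_1/Z_1,\FF)\otimes\sigma}.
\end{equation*}
Since inflation is injective, it suffices to show that the last map is zero. Equivalently, we must show that every extension $0\to\sigma\to E\to\tau\to0$ of $K/Z_1$-representations has $K_1$ acting trivially.

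The first step is to identify $H^1(K_1/Z_1,\FF)=\Hom(K_1/Z_1K_2,\FF)$ as a $\GL_3(\Fq)$-module. It is the dual of $\mathfrak{sl}_3\otimes\Fq$ (the quotient $\mathfrak{gl}_3/\text{center}$ modulo $p$), and it is built from the Frobenius twists $F(\alpha_{13})^{[j]}$, adjoint up to twist. The obstruction therefore lies in $\bigoplus_j\Hom_{\GL_3(\Fq)}\bigbra{\tau,\sigma\otimes F(\alpha_{13})^{[j]}}$. By Corollary~\ref{K-ext Cor Tensor Socle}, $\tau=F(\lambda+\alpha_{j_0})$ does occur in the socle of the $j=j_0$ term, because $\lambda\in C(\alpha)^+$ (Remark~\ref{K-ext Rk Calpha+}). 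Hence this term cannot be removed by a socle argument alone, and the real content is an explicit computation inside $E$.

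The second step is the explicit computation. Take a nonsplit $E$ and pick $v\in E^{I_1}$ lifting $v_{\lambda+\alpha_{j_0}}\in\tau^{I_1}$ with $H$-eigencharacter $\chi_{\lambda+\alpha_{j_0}}$; this is possible because $H$ has order prime to $p$. Using Lemma~\ref{K-ext Lem Iwasawa}, the action of $K_1$ on $v$ is governed by the elements $u_\beta(p[t])-1$ and $T_1$. These take $v$ into $\sigma$, landing in $H$-eigenspaces $\sigma^{\lambda+\alpha_{j_0}+\beta_{j}}$ for $\beta\in\Phi$ and $j\in\cJ$. The goal is to prove that all these images vanish, by exploiting the commutation relations in $K$ together with the relation $X_{\beta,j}X_{-\beta,j}$ acting through $T_1$.
\begin{itemize}
\item For $\beta\in\Phi^+$ and $j=j_0$, Lemma~\ref{K-ext Lem character-weight same} identifies $\sigma^{\mu}$ with the weight space $L(\lambda)_{\mu-(p-1)\un{\alpha_{13}}}$.
\item For $j\neq j_0$, Lemma~\ref{K-ext Lem character-weight different} controls the $j_0$-component of the relevant weights.
\item The images are also killed by suitable operators $X_{13,j_0}$ coming from $I_1$-invariance of $v$ (via commutators $[u_{13}(\cdot),u_{\beta}(p\cdot)]\in K_2$ or landing in other root groups). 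By Lemma~\ref{K-ext Lem Yj0 injective} and Lemma~\ref{K-ext Lem Yj0 isomorphism}, $X_{13,j_0}$ is injective on these spaces, so the images vanish.
\item For negative $\beta$, and for the component in $\sigma^{\lambda+\alpha_{j_0}}$ coming from $T_1$, reduce to the weight spaces $\lambda-\alpha_{j_0}$ and $\lambda-\alpha_{13,j_0}$. There, Lemmas~\ref{K-ext Lem lambda-alpha12} and~\ref{K-ext Lem lambda-alpha13} force the element into a genuine weight space.
\item Lemmas~\ref{K-ext Lem nonzero alpha13}--\ref{K-ext Lem nonzero alpha=0} and Lemma~\ref{K-ext Lem operator mapsto zero} then let us apply lowering operators toward the lowest weight. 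This produces a nonzero element where the relations force zero, a contradiction unless the coefficient vanishes.
\end{itemize}

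Once $K_1$ is shown to act trivially on $v$, it acts trivially on the subrepresentation generated by $v$, which is all of $E$ since $E$ is nonsplit. Hence $E$ is inflated from $\GL_3(\Fq)$.

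I expect the main obstacle to be the positive-root, same-embedding component $\sigma^{\lambda+\alpha_{j_0}+\beta_{j_0}}$. This is exactly where $\Hom\bigbra{\tau,\sigma\otimes F(\alpha_{13})^{[j_0]}}\neq0$, so it must be ruled out through the group relation rather than through $H$-eigencharacters. I would first try to use the relation
\begin{equation*}
u_{13}(p[t])=\bigl[u_{12}([a]),u_{23}(p[b])\bigr]
\end{equation*}
together with $I_1$-invariance of $v$ to express this component as $X_{13,j_0}$ applied to an element of $\sigma^{\nu}$, where $\nu=\lambda+\alpha_{j_0}-(p-1)\un{\alpha_{13}}$. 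Then Lemma~\ref{K-ext Lem Yj0 isomorphism} lets us modify $v$ by an element of $\sigma$ so as to kill it. The case analysis over $\alpha\in\Phi^+\cup\set{0}$, and the separate treatment of $f=1$ (with $C(\alpha,0)$ and the squared operators), would follow the same template.
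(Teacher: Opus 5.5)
Your proposal has a genuine gap at its starting point: you cannot in general pick $v\in E^{I_1}$ lifting $\tau^{I_1}$. That $|H|$ is prime to $p$ only lets you choose an $H$-eigenvector lift, and $I_1$ is pro-$p$, so no averaging is available. Worse, since $K_1\subseteq I_1$, an $I_1$-invariant lift would already be fixed by $K_1$, and the theorem would follow at once. This is exactly Lemma~\ref{K-ext Lem w31=0}: if $v\in V^{I_1}$, Frobenius reciprocity gives a surjection $\Ind_I^K\FF v\to V$ from a representation inflated from $\GL_3(\Fq)$. So your second step is circular, and every later appeal to ``$I_1$-invariance of $v$'' is unavailable.

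What can actually be arranged is $U(\OL)$-invariance (Lemma~\ref{K-ext Lem U invariant}), and even that requires modifying $v$ by an element of $\sigma$. One kills $X_{13,j_0}v\in\sigma^{\lambda+\alpha_{j_0}+\alpha_{13,j_0}}$ via Lemmas~\ref{K-ext Lem character-weight same} and~\ref{K-ext Lem Yj0 isomorphism}. The other $X_{\beta,j}v$ then vanish by injectivity of $X_{13,j_0}$ and $X_{13,j_0-1}$ (Lemmas~\ref{K-ext Lem character-weight different} and~\ref{K-ext Lem Yj0 injective}). Your proposed fix for the ``main obstacle'' is essentially this normalization step. However, it concerns the $U(\OL)$-action, and once it is done the positive-root components of the $K_1$-action vanish for free.

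Consequently you have misplaced the real difficulty. After normalizing $v$, the $T_1$-action is a homomorphism $T_1/Z_1\to\FF u$, trivial if $\alpha\neq0$. The $U(\OL)$-action on $w_{21,j},w_{32,j}$ forces them into $\sigma^{I_1}\cap\sigma^{\lambda+\alpha_{j_0}+\beta_j}$, which vanishes outside the expected cases (Lemmas~\ref{K-ext Lem Torus-action} and~\ref{K-ext Lem I-structure}). The component that survives every eigencharacter and weight argument is $w_{31,j_0}=X_{31,j_0-1}^pv$, i.e.\ the action of $U_{31}(p\OL)$. By Lemmas~\ref{K-ext Lem lambda-alpha12} and~\ref{K-ext Lem lambda-alpha13} it is a vector in the genuine weight space $L(\lambda)_{\lambda+\alpha_{j_0}-\alpha_{13,j_0}}$ at or near the top of $\sigma$, so nothing local kills it.

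The paper rules it out by a global argument your proposal lacks. Let $W$ be the $I$-span of $v$, let $f:\Ind_I^KW\to V$ be the Frobenius reciprocity surjection, and consider $y=\bigl(\prod_jX_{13,j}^{m_j}X_{12,j}^{n_j}\bigr)\smat{&&1\\&1&\\1&&}[1,v]$ with exponents as in~\eqref{K-ext Eq mjnj}. Lemma~\ref{K-ext Lem operator mapsto zero}, applied in $\tau$, gives $f(y)\in\sigma$. An eigenspace analysis shows every weight component of $f(y)$ has some $\nu_{j,2}\neq\mu_{j,2}$. The operators $X_{13,j}$ do not change middle coordinates of weights, so the same holds for $f(X_{13,j_0-1}y)$.

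On the other hand, the extra factor completes $X_{13,j_0-1}^{p-1}$ to $X_{13,j_0-1}^p$. After conjugating by the long Weyl element this becomes $X_{31,j_0-1}^p\in\FF[U_{31}(p\OL)]$, which turns $[1,v]$ into $[1,w_{31,j_0}]$. Lemmas~\ref{K-ext Lem nonzero alpha13}--\ref{K-ext Lem nonzero alpha=0} then show $f(X_{13,j_0-1}y)$ is nonzero in $L(\lambda)_{\mu'}$, where $\mu'_{j,2}=\mu_{j,2}$ for all $j$. This is a contradiction.

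Your last bullets gesture at ``lowering operators toward the lowest weight''. But without the induced representation, the Weyl element, the specific exponents, and the two competing computations of one vector, there is no mechanism that produces the contradiction.
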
 

\proof

Let $V$ be a $K/Z_1$-representation that is a non-split extension of $\tau$ by $\sigma$
\begin{equation*}
    0\to\sigma\to V\to\tau\to0.
\end{equation*}
The goal is to show that $V$ is $K_1$-invariant.
We let $0\neq v\in V$ be an $H$-eigenvector whose image in $\tau$ is a nonzero element of $\tau^{I_1}$. In particular, we have $v\in V^{\lambda+\alpha_{j_0}}$. We start with the $U(\OL)$-action on $v$.

\begin{lemma}\label{K-ext Lem U invariant}
    We may choose $v$ such that $X_{13,j}{v}=X_{12,j}{v}=X_{23,j}{v}=0$ for all $j\in\cJ$. Equivalently, $v$ is fixed by $U(\OL)$.
\end{lemma}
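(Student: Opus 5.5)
The plan is to first reduce $U(\OL)$-invariance to a finite set of vanishing conditions, and then adjust $v$ by a suitable element of $\sigma$ so that these conditions hold.

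\textbf{Reduction.} By Lemma \ref{K-ext Lem Iwasawa}, the augmentation ideal of $\FF\ddbra{U_\beta(\OL)}$ is generated by $X_{\beta,0},\ldots,X_{\beta,f-1}$. Since $U(\OL)=U_{12}(\OL)U_{23}(\OL)U_{13}(\OL)$, $v$ is fixed by $U(\OL)$ if and only if all $X_{\beta,j}v$ vanish for $\beta\in\Phi^+$ and $j\in\cJ$. This gives the stated equivalence.

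\textbf{Where the $X_{\beta,j}v$ live.} The image $\bar v$ of $v$ in $\tau$ lies in $\tau^{I_1}$, so $u_\beta([t])\bar v=\bar v$ for all $t$. Hence $X_{\beta,j}\bar v=\bigl(\sum_{t\in\Fq\x}t^{-p^j}\bigr)\bar v=0$, and therefore $X_{\beta,j}v\in\sigma$. By Lemma \ref{K-ext Y changes characters},
\[
X_{\beta,j}v\in\sigma^{\lambda+\alpha_{j_0}+\beta_j}.
\]
Lemmas \ref{K-ext Lem character-weight same} and \ref{K-ext Lem character-weight different} describe these eigenspaces explicitly in terms of weight spaces of $L(\lambda)$:
\begin{itemize}
\item for $j=j_0$, the eigenspace is $L(\lambda)_{\lambda+\alpha_{j_0}+\beta_{j_0}-(p-1)\un{\alpha_{13}}}$;
\item for $j\neq j_0$, it is a sum of weight spaces $L(\lambda)_\nu$ with $\nu_{j_0}=\lambda_{j_0}+\alpha-(p-1)\alpha_{13}$.
\end{itemize}
For $f=1$ one uses the analogous statements.

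\textbf{Step 1: kill $X_{13,j_0}v$.} Take $\beta=\alpha_{13}$ and $j=j_0$. Then $X_{13,j_0}v\in L(\lambda)_{\nu+\alpha_{13,j_0}}$ with $\nu=\lambda+\alpha_{j_0}-(p-1)\un{\alpha_{13}}$. By Lemma \ref{K-ext Lem Yj0 isomorphism}, $X_{13,j_0}\colon L(\lambda)_\nu\to L(\lambda)_{\nu+\alpha_{13,j_0}}$ is an isomorphism. So there is $w\in L(\lambda)_\nu\subseteq\sigma^{\lambda+\alpha_{j_0}}$ with $X_{13,j_0}w=X_{13,j_0}v$. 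Replacing $v$ by $v-w$ does not change $\bar v$ or the $H$-eigencharacter, and gives $X_{13,j_0}v=0$.

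In the exceptional case of that lemma ($\alpha=0$, $\lambda_{j_0,1}-\lambda_{j_0,3}=p-2$, $\lambda_{j,1}-\lambda_{j,3}\geq p-1$ for $j\neq j_0$), I would check directly that the target $L(\lambda)_{\nu+\alpha_{13,j_0}}$ is $0$. If it is not, I would instead use the hypothesis defining $C(0,j_0)$, respectively $C(0,0)$ when $f=1$; when $f=1$, $\lambda_1-\lambda_3\neq p-2$ in $C(0,0)$ already rules this case out.

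\textbf{Step 2: propagate the vanishing to all $X_{\beta,j}v$.} The key structural fact is that $U_{13}(\OL)$ is the center of $U(\OL)$. Hence $X_{13,j_0}$ commutes exactly with every $X_{\beta,j}$ in $\FF[U(\OL)]$. Therefore
\[
X_{13,j_0}\bigl(X_{\beta,j}v\bigr)=X_{\beta,j}\bigl(X_{13,j_0}v\bigr)=0 .
\]
It then suffices to show that $X_{13,j_0}$ is injective on $\sigma^{\lambda+\alpha_{j_0}+\beta_j}$.
\begin{itemize}
\item For $j\neq j_0$, this is Lemma \ref{K-ext Lem Yj0 injective}, because all weights $\nu$ occurring there have $\nu_{j_0}=\lambda_{j_0}+\alpha-(p-1)\alpha_{13}$.
\item For $j=j_0$ and $\beta\in\{\alpha_{12},\alpha_{23}\}$, the eigenspace is the single weight space $L(\lambda)_{\nu}$ with $\nu=\lambda+\alpha_{j_0}+\beta_{j_0}-(p-1)\un{\alpha_{13}}$. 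I would use Lemma \ref{K-ext Lem Y maps into} to rewrite $X_{13,j_0}$ as $-X^{(j_0),\op{alg}}_{13}$. Then I would apply Lemma \ref{K-ext Lem Xalg injective} on the $j_0$-factor, checking that $\langle\nu_{j_0},\alpha_{13}^\vee\rangle=\lambda_{j_0,1}-\lambda_{j_0,3}+\langle\alpha+\beta,\alpha_{13}^\vee\rangle-(2p-2)<0$ and that $\nu_{j_0}+p\alpha_{13}\not\leq\lambda_{j_0}$.
\item For $j=j_0$ and $\beta=\alpha_{13}$, there is nothing to do: this is Step 1.
\end{itemize}

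\textbf{Main obstacle.} I expect the hardest part to be the $j=j_0$, $\beta\in\{\alpha_{12},\alpha_{23}\}$ case in the boundary configurations: when $\alpha+\beta$ raises $\langle\cdot,\alpha_{13}^\vee\rangle$ by $2$ and $\lambda_{j_0,1}-\lambda_{j_0,3}$ is close to $2p-3$, the pairing $\langle\nu_{j_0},\alpha_{13}^\vee\rangle$ can stop being negative. There I would first try to show that the relevant weight space is already $0$, using $\lambda_{j,3}\leq\nu_{j,i}\leq\lambda_{j,1}$ together with the defining inequalities of $C(\alpha)^+$. If that fails, I would fall back on Lemma \ref{K-ext Lem Y isomorphism}, which handles the $(p-2,p-2)$ situation by a finer Weyl-module argument.
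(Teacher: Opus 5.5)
\textbf{Gap in the case $j=j_0$, $\beta\in\{\alpha_{12},\alpha_{23}\}$.} Your reduction, Step 1 outside the exceptional case, and the $j\neq j_0$ part of Step 2 match the paper. The failure is exactly at the step you flagged, and neither fallback repairs it. For $f\geq2$, $C(\alpha,j_0)$ only requires $\lambda_{j_0}\in C(\alpha)^+$, which allows $\lambda_{j_0,1}-\lambda_{j_0,3}=2p-4$. In that range $X_{13,j_0}$ is genuinely not injective on $\sigma^{\lambda+\alpha_{j_0}+\beta_{j_0}}=L(\lambda)_\nu$, where $\nu=\lambda+\alpha_{j_0}+\beta_{j_0}-(p-1)\un{\alpha_{13}}$.

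Here is a concrete counterexample. Take $f=2$, $j_0=0$, $\alpha=\alpha_{13}$, $\beta=\alpha_{12}$, $\lambda_0=(2p-4,p-2,0)$ and $\lambda_1=(p-1,0,0)$; then $\lambda\in C(\alpha_{13},0)$ and $(\lambda,\lambda+\alpha_{13,0})$ is not a bad pair.
\begin{itemize}
\item The weight components are $\nu_0=(p-1,p-3,p-2)$, with $\langle\nu_0,\alpha_{13}^\vee\rangle=1>0$, and $\nu_1=(0,0,p-1)=w_0\lambda_1$.
\item Kostant's formula gives $\dim L(\lambda_0)_{\nu_0}=p-2>p-3=\dim L(\lambda_0)_{\nu_0+\alpha_{13}}$. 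The correction term $V(s_2(\lambda_0))$, with $s_2(\lambda_0)=(p-2,p-2,p-2)$, does not contribute to either weight.
\item By Lemma \ref{K-ext Lem Y maps into}, $X_{13,0}=-X_{13}^{(0),\op{alg}}$ on $L(\lambda)_\nu$, so it has a nonzero kernel on the nonzero space $L(\lambda)_\nu$.
\end{itemize}
So the weight space is not zero. Lemma \ref{K-ext Lem Y isomorphism} concerns the different weight $\lambda+\alpha-(p-1)\alpha_{13}$, so it does not apply either. A similar failure occurs for $\alpha=\beta=\alpha_{12}$ with $\lambda_{j_0}|_{\SL_3}=(p-3,p-1)$.

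\textbf{The missing idea.} Test $X_{\beta,j_0}v$ against $X_{13,j_0-1}$ instead of $X_{13,j_0}$.
\begin{itemize}
\item Your $j\neq j_0$ step already gives $X_{13,j_0-1}v=0$. Since $U_{13}$ is central in $U$, this yields $X_{13,j_0-1}(X_{\beta,j_0}v)=0$.
\item Because $f\geq2$, the $(j_0-1)$-component of $\nu$ is $\lambda_{j_0-1}-(p-1)\alpha_{13}$. Its $\alpha_{13}^\vee$-pairing is $\lambda_{j_0-1,1}-\lambda_{j_0-1,3}-(2p-2)\leq-1$, by the condition $\lambda_{j_0-1,1}-\lambda_{j_0-1,3}\leq 2p-3$ built into $C(\alpha,j_0)$.
\item Injectivity then follows from Lemma \ref{K-ext Lem Yj0 injective} with $(\alpha,j_0)$ replaced by $(0,j_0-1)$.
\end{itemize}
This is precisely why $C(\alpha,j_0)$ constrains $\lambda_{j_0-1}$ rather than tightening $\lambda_{j_0}$. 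For $f=1$ your direct argument with $X_{13}$ does work: the extra conditions in Definition \ref{K-ext Def Calpha} force $\langle\nu,\alpha_{13}^\vee\rangle\leq-1$.

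\textbf{A smaller gap in Step 1.} In the exceptional case of Lemma \ref{K-ext Lem Yj0 isomorphism}, the target $L(\lambda)_{\nu+\alpha_{13,j_0}}$ is in general nonzero: its $j_0$-component is the extremal weight $w_0\lambda_{j_0}$, while the source is zero. Moreover, $C(0,j_0)$ does not exclude this case. The fix is that for $\alpha=0$ one has $\tau=\sigma$ and $C(0,j_0)$ does not depend on $j_0$. You may therefore re-choose $j_0$ to avoid the exception, which is always possible when $f\geq2$.
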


\begin{proof}
    We give a proof in the case $f\geq2$. The case $f=1$ being similar and simpler. In the case $\alpha=0$, we modify $j_0$ such that either $\lambda_{j_0,1}-\lambda_{j_0,3}\leq p-2$ or $\lambda_{j,1}-\lambda_{j,3}\leq p-2$ for some $j\neq j_0$.

    First we prove that it is possible to choose $v$ such that $X_{13,j_0}v=0$. We start with an arbitrary choice of $v$. Since $v$ maps into $\tau^{I_1}$ in $\tau$, the image of $X_{13,j_0}v$ in $\tau$ is zero, hence $X_{13,j_0}v\in\sigma$. By Lemma \ref{K-ext Y changes characters} we have $X_{13,j_0}v\in\sigma^{\lambda+\alpha_{j_0}+\alpha_{13,j_0}}$, which equals $L(\lambda)_{\lambda+\alpha_{j_0}+\alpha_{13,j_0}-(p-1)\un{\alpha_{13}}}$ by Lemma \ref{K-ext Lem character-weight same}. Then by Lemma \ref{K-ext Lem Yj0 isomorphism} and the choice of $j_0$, there exists $w\in L(\lambda)_{\lambda+\alpha_{j_0}-(p-1)\un{\alpha_{13}}}\subseteq\sigma^{\lambda+\alpha_{j_0}}$ such that $X_{13,j_0}w=X_{13,j_0}v$. We conclude by replacing $v$ with $v-w$.

    Next we prove that for $v$ as in the previous paragraph, we have $X_{\beta,j_1}v=0$ for all $j_1\neq j_0$ and $\beta\in\Phi^+$. Similar to the previous paragraph, we have $X_{\beta,j_1}v\in\sigma^{\lambda+\alpha_{j_0}+\beta_{j_1}}$. By Lemma \ref{K-ext Lem character-weight different} if $\alpha\neq0$ and by Lemma \ref{K-ext Lem character-weight same} (replacing $j_0$ with $j_1$) if $\alpha=0$, $\sigma^{\lambda+\alpha_{j_0}+\beta_{j_1}}$ is a direct sum of weight spaces $L(\lambda)_{\nu}$ with $\nu_{j_0}=\lambda_{j_0}+\alpha-(p-1)\alpha_{13}$. Since $X_{13,j_0}\bra{X_{\beta,j_1}v}=X_{\beta,{j_1}}\bra{X_{13,j_0}v}=0$ by the choice of $v$, we conclude from Lemma \ref{K-ext Lem Yj0 injective} that $X_{\beta,j_1}v=0$.

    Finally, we prove that $X_{12,j_0}v=X_{13,j_0}v=0$ for $v$ as above. Let $\beta\in\set{\alpha_{12},\alpha_{23}}$. Then as above, we have $X_{\beta,j_0}v\in\sigma^{\lambda+\alpha_{j_0}+\beta_{j_0}}=L(\lambda)_{\lambda+\alpha_{j_0}+\beta_{j_0}-(p-1)\un{\alpha_{13}}}$ by Lemma \ref{K-ext Lem character-weight same}. Since $X_{13,j_0-1}\bra{X_{\beta,j_0}v}=X_{\beta,j_0}\bra{X_{13,j_0-1}v}=0$ by the previous paragraph, and $\lambda\in C(\alpha,j_0)$ implies $\lambda_{j_0-1,1}-\lambda_{j_0-1,3}\leq 2p-3$, we conclude from Lemma \ref{K-ext Lem Yj0 injective} (replacing $(\alpha,j_0)$ with $(0,j_0-1)$) that $X_{\beta,j_0}v=0$.
\end{proof}

From now on, we fix a choice of $v$ as in Lemma \ref{K-ext Lem U invariant}. In particular, $v$ is fixed by $u_{12}(t)$, $u_{13}(t)$ and $u_{23}(t)$ for all $t\in\OL$. Then we turn to the $T_1$-action on $v$. We fix $0\neq u\in\sigma^{I_1}=L(\lambda)_{\lambda}$, which is unique up to scalar.

\begin{lemma}\label{K-ext Lem Torus-action}
    There exist $k_{1,j},k_{2,j},k_{3,j}\in\FF$ with $k_{1,j}+k_{2,j}+k_{3,j}=0$ for all $j$ such that 
    \begin{equation*}
        \op{diag}\bigbra{1+pa,1+pb,1+pc}v=v+\ssum_{j=0}^{f-1}\bigbra{k_{1,j}\ovl{a}^{p^j}+k_{2,j}\ovl{b}^{p^j}+k_{3,j}\ovl{c}^{p^j}}u\quad\forall\,a,b,c\in\OL.
    \end{equation*}
    If moreover $\alpha\neq0$, then we have $k_{1,j}=k_{2,j}=k_{3,j}=0$ for all $j$.
\end{lemma}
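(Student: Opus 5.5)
The plan is to study the map $T_1\to\sigma$ given by $t\mapsto tv-v$. Since $T_1$ is abelian and $v$ is an $H$-eigenvector, we first show that this map lands in $\sigma^{\lambda+\alpha_{j_0}}$ and is a homomorphism. We then use that $v$ is fixed by $U(\OL)$ (Lemma \ref{K-ext Lem U invariant}) to force the image into $\sigma^{I_1}=\FF u$.

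First, the map lands in $\sigma$. Indeed, $T_1\subseteq K_1$ acts trivially on $\tau$, so $tv-v\in\sigma$ for $t\in T_1$. Since $T_1$ commutes with $H$, the element $tv-v$ lies in $\sigma^{\lambda+\alpha_{j_0}}$.

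Second, the map is a homomorphism. As $T_1$ is abelian and acts trivially on $\sigma\subseteq V$ (again because $T_1\subseteq K_1$ and $\sigma$ is inflated), we get $(tt')v-v=(tv-v)+(t'v-v)$. So $t\mapsto tv-v$ is a group homomorphism $T_1\to\sigma$. It factors through $T_1/T_1^p\cong T(\Fq)$-additive, i.e. through $(\overline a,\overline b,\overline c)\in\Fq^3$. Using $H$-conjugation (which fixes $T_1$ pointwise and scales $tv-v$ by the character $\chi_{\lambda+\alpha_{j_0}}$, consistent with triviality), together with $\FF_p$-linearity, we see that each coordinate of the map is an additive map $\Fq\to\sigma$. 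Such a map is an $\FF$-combination of $\overline a^{p^j}$ after extending scalars: $x\mapsto \sum_j x^{p^j}w_j$ with $w_j\in\sigma$.

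Third, we show that $tv-v\in\sigma^{U(\OL)}=\sigma^{I_1}$. For $u'\in U(\OL)$ and $t\in T_1$, the commutator $t^{-1}u't$ lies in $U(\OL)$ and differs from $u'$ by an element of $U(p\OL)\subseteq K_1$. Hence $u'tv=t(t^{-1}u't)v=tv$, because $v$ is $U(\OL)$-fixed. So $tv$ is $U(\OL)$-invariant, and therefore $tv-v\in\sigma^{U(\OL)}$. Since $I_1$ is generated by $U(\OL)$, $T_1$ and $K_1\cap\bar U$-type elements, some care is needed here. It suffices to note that $\sigma$ is a $\GL_3(\Fq)$-representation and that $U(\OL)$ surjects onto $U(\Fq)$, so $\sigma^{U(\OL)}=\sigma^{U(\Fq)}=L(\lambda)_\lambda=\FF u$.

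This yields the displayed formula with scalars $k_{i,j}$. The relation $k_{1,j}+k_{2,j}+k_{3,j}=0$ comes from $Z_1$ acting trivially on $V$: taking $a=b=c$ gives $\sum_j(k_{1,j}+k_{2,j}+k_{3,j})\overline a^{p^j}=0$ for all $\overline a\in\Fq$. The characters $\overline a\mapsto\overline a^{p^j}$ are linearly independent, so each sum vanishes.

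Finally, suppose $\alpha\neq0$. Now $tv-v\in\sigma^{\lambda+\alpha_{j_0}}\cap\FF u$, while $u\in\sigma^{\lambda}$. We check $\chi_{\lambda+\alpha_{j_0}}\neq\chi_\lambda$: indeed $\alpha_{j_0}\notin(p-\pi)X(\un T)$, since elements of $(p-\pi)X(\un T)$ that are nonzero have some coordinate with entries of absolute value at least $p-1\geq 4$, whereas $\alpha_{j_0}$ has entries in $\{0,\pm1\}$. Hence the two eigenspaces intersect trivially and all $k_{i,j}=0$.

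The main obstacle I expect is the precise justification that $tv-v$ is $U(\OL)$-invariant. This needs the commutator computation $t^{-1}u_\beta(x)t=u_\beta(\beta(t)^{-1}x)$ with $\beta(t)^{-1}x-x\in p\OL$, together with $U(p\OL)$ acting trivially on all of $V$. That last point is not automatic, since $V$ need not be $K_1$-trivial. Since $v$ is fixed by all of $U(\OL)$, however, we get directly $u'(tv)=t\,u_\beta(\beta(t)^{-1}x)v=tv$, which avoids the issue.
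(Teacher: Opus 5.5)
Your proposal is correct and takes essentially the same approach as the paper. As in the paper, you show that $w_g=gv-v$ lies in $\sigma$, is fixed by $U(\OL)$ because $T_1$ normalizes $U(\OL)$ and $v$ is $U(\OL)$-fixed, hence lies in $\sigma^{I_1}=\FF u$, so that $g\mapsto w_g$ is a homomorphism of the stated form, and for $\alpha\neq0$ you conclude via $\sigma^{\lambda+\alpha_{j_0}}\cap\sigma^{\lambda}=0$. The differences are only presentational: you impose $Z_1$-triviality after writing the general additive form instead of building it into the domain $T_1/Z_1$, and you check $\chi_{\lambda+\alpha_{j_0}}\neq\chi_{\lambda}$ explicitly, which the paper leaves implicit.
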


\begin{proof}
    For $g\in T_1$ we write $w_g\eqdef g(v)-v\in V$. Since $\tau$ is $K_1$-invariant, the image of $w_g$ in $\tau$ is zero, hence $w_g\in\sigma$ and is fixed by $K_1$. Since $v$ is fixed by $U(\OL)$ and since $T_1$ normalizes $U(\OL)$, we deduce that $w_g$ is also fixed by $U(\OL)$. Hence we have $w_g\in\sigma^{I_1}=\FF u$. Therefore, the map $g\mapsto w_g$ defines an element of 
    \begin{equation*}
        H^1(T_1/Z_1,\FF u)\cong\Hom(T_1/Z_1,\FF)\cong\Hom(\OL^3/\OL,\FF)\cong\Hom(\Fq^3/\Fq,\FF),
    \end{equation*}
    which is isomorphic to $\Ker\!\bigbra{\FF^3\to\FF:(a,b,c)\mapsto a+b+c}^{\oplus f}$. This proves the first statement.
    
    For $g\in T_1$, since $g$ commutes with $H$ and $v\in V^{\lambda+\alpha_{j_0}}$, we deduce that $w_g\in\sigma^{\lambda+\alpha_{j_0}}$. The previous paragraph also shows that $w_g\in\sigma^{I_1}\subseteq\sigma^{\lambda}$. Hence, in the case $\alpha\neq0$ we have $w_g\in\sigma^{\lambda+\alpha_{j_0}}\cap\sigma^{\lambda}=0$ for all $g\in T_1$, which proves the second statement.
\end{proof}

Next we turn to the $I$-action on $v$. For $\beta\in\Phi^-$ and $j\in\cJ$, we write $w_{\beta,j}\eqdef X_{\beta,j-1}^pv\in\sigma$ (since $v$ has image $\tau^{I_1}$ in $\tau$). By Lemma \ref{K-ext Y changes characters} we have $w_{\beta}\in\sigma^{\lambda+\alpha_{j_0}+p\beta_{j-1}}=\sigma^{\lambda+\alpha_{j_0}+\beta_{j}}$. 

\begin{lemma}\label{K-ext Lem I-structure}
    Let $j\in\cJ$.
\begin{enumerate}
    \item 
    If $k_{1,j}=k_{2,j}$, then $w_{21,j}=0$ unless $\alpha=\alpha_{12}$ and $j=j_0$. If $k_{1,j}\neq k_{2,j}$, then $w_{21,j}\neq0$.
    \item 
    If $k_{2,j}=k_{3,j}$, then $w_{32,j}=0$ unless $\alpha=\alpha_{23}$ and $j=j_0$. If $k_{2,j}\neq k_{3,j}$, then $w_{32,j}\neq0$.
    \item 
    If $w_{21,j}=w_{32,j}=0$, then $w_{31,j}=0$ unless $\alpha=\alpha_{13}$ and $j=j_0$.
\end{enumerate}
\end{lemma}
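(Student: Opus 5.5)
Each $w_{\beta,j}=X_{\beta,j-1}^pv$ lies in $\sigma$ and has $H$-eigencharacter $\lambda+\alpha_{j_0}+\beta_j$. The plan is to show that $w_{\beta,j}$ is killed by enough positive-root operators to force it into a specific weight space, where nonvanishing can then be computed. The engine is the group-theoretic commutation of $u_{\gamma}(x)$ ($\gamma\in\Phi^+$) with $u_{\beta}(py)$, combined with the description of $v$ from Lemma \ref{K-ext Lem U invariant} and Lemma \ref{K-ext Lem Torus-action}.

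\textbf{Step 1 (identifying $w_{\beta,j}$ with a $K_1$-cocycle).} Since $v$ maps to $\tau^{I_1}$ and $\tau$ is trivial on $K_1$, the map $g\mapsto gv-v$ for $g\in K_1$ lands in $\sigma$. Using Lemma \ref{K-ext Lem Iwasawa} for $U_\beta(p\OL)$ (the elements $X_{\beta,j-1}^p$ are, modulo higher terms, the linear coordinates on $U_\beta(p\OL)\cong p\OL$), I would identify $w_{\beta,j}$, up to sign, with the $\sigma_j$-component of the additive map $y\mapsto u_\beta(py)v-v\in\sigma$, which is a homomorphism $U_\beta(p\OL)\to\sigma$.

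\textbf{Step 2 (commutators).} For $\beta=\alpha_{21}$, I would compute $u_{12}(x)u_{21}(py)u_{12}(-x)$. Modulo $K_2$ and $U_{21}(p\OL)$, it equals a product of $u_{12}(-px^2y)$ and the torus element $\op{diag}(1+pxy,1-pxy,1)$. Applying this to $v$, which is fixed by $U(\OL)$, and using Lemma \ref{K-ext Lem Torus-action} gives
\[
(u_{12}(x)-1)w_{21,j}\text{-cocycle}=(k_{1,j}-k_{2,j})(\text{explicit function of }x)\,u .
\]
Extracting coefficients, this says $X_{12,j}w_{21,j}$ is a multiple of $(k_{1,j}-k_{2,j})u$, while $X_{12,j'}w_{21,j}=0$ for $j'\neq j$. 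Similarly, $u_{13}$ and $u_{23}$ commute with $U_{21}(p\OL)$ up to elements of $U(\OL)$ and $K_2$, so $X_{13,j'}w_{21,j}=0$ for all $j'$ and $X_{23,j'}w_{21,j}$ lies in the $U_{21}$-type corrections. From these I expect $X_{23,\ast}w_{21,j}=0$ as well.

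\textbf{Step 3 (using the weight lemmas).} First suppose $k_{1,j}\neq k_{2,j}$. Then $X_{12,j}w_{21,j}\neq0$, so $w_{21,j}\neq0$. Now suppose $k_{1,j}=k_{2,j}$. Then $w_{21,j}$ is killed by all $X_{\gamma,j'}$ with $\gamma\in\Phi^+$, hence lies in $\sigma^{I_1}=L(\lambda)_\lambda$, which has character $\chi_\lambda$. But $w_{21,j}$ has character $\lambda+\alpha_{j_0}-\alpha_{12,j}$, and this equals $\chi_\lambda$ (modulo $(p-\pi)X(\un T)$) only when $\alpha=\alpha_{12}$ and $j=j_0$. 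So outside that case $w_{21,j}=0$, which proves (i). Part (ii) is symmetric.

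For (iii), the hypothesis $w_{21,j}=w_{32,j}=0$ combined with the commutator $u_{12}(x)u_{31}(py)u_{12}(-x)\equiv u_{31}(py)u_{32}(-pxy)$ kills the corrections, so that $X_{12,\ast}w_{31,j}=X_{23,\ast}w_{31,j}=0$ and $X_{13,\ast}w_{31,j}$ involves only torus terms. To exclude those torus terms, I would use Lemma \ref{K-ext Lem lambda-alpha13}: $w_{31,j}\in\sigma^{\lambda+\alpha_{j_0}-\alpha_{13,j}}$, and being killed by $X_{12,j-1}$ (or by its square when $f=1$) forces $w_{31,j}$ into a genuine weight space. The invariance argument then gives $w_{31,j}=0$ unless its character matches that of $\lambda$, i.e.\ unless $\alpha=\alpha_{13}$ and $j=j_0$. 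Lemma \ref{K-ext Lem lambda-alpha12} plays the analogous role if, in (i), only partial vanishing is obtained.

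\textbf{Main obstacle.} The hardest part is the bookkeeping in Step 2: showing that the higher-order terms in Lemma \ref{K-ext Lem Iwasawa}, and the $K_2$-parts of the commutators, contribute nothing. I would handle this by noting that $K_2=K_1^p$ acts trivially on $V$, which holds because $V$ has Loewy length $2$ as a $K_1$-module, so the action factors through $K_1/K_2$, an elementary abelian group. This reduces every commutator computation to the Lie algebra $\mathfrak{gl}_3(\Fq)$.
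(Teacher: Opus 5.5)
Your argument for (i) and (ii) is essentially the paper's. You conjugate $u_{21}(p[t])$ by $u_{12}([s])$, $u_{13}([s])$, $u_{23}([s])$, using that $K_2$ acts trivially, and read off $u_{12}([s])w_{21,j}=w_{21,j}-(k_{1,j}-k_{2,j})s^{p^j}u$ together with $U_{13}$- and $U_{23}$-invariance. A small correction: $u_{23}$ commutes with $U_{21}$ exactly, so there are no corrections to worry about there. The case $k_{1,j}=k_{2,j}$ then follows because $\sigma^{I_1}\cap\sigma^{\lambda+\alpha_{j_0}+\alpha_{21,j}}=0$.

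The gap is in (iii). Once $w_{31,j}$ is known to be fixed by $U_{12}$ and $U_{23}$, you still have
$u_{13}([s])w_{31,j}=w_{31,j}-(k_{1,j}-k_{3,j})s^{p^j}u$.
Your plan for removing this term through Lemma \ref{K-ext Lem lambda-alpha13} fails, for three reasons:
\begin{enumerate}
\item That lemma is about vectors with character $\chi_{\lambda-\alpha_{13,j}}$. But $w_{31,j}$ has character $\chi_{\lambda+\alpha_{j_0}-\alpha_{13,j}}$, so the lemma is unavailable unless $\alpha=0$.
\item It needs $\lambda_{j-1,1}-\lambda_{j-1,2},\lambda_{j-1,2}-\lambda_{j-1,3}\le p-2$, which is not a hypothesis here for arbitrary $j$.
\item Even where it applies, placing $w_{31,j}$ in $L(\lambda)_{\lambda-\alpha_{13,j}}$ gives no control of $X_{13,j}w_{31,j}$. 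In general $X_{13}^{\op{alg}}$ maps $L(\lambda_j)_{\lambda_j-\alpha_{13}}$ nontrivially onto $L(\lambda_j)_{\lambda_j}$; for instance $X_{13}^{\op{alg}}X_{31}^{\op{alg}}v_{\lambda_j}=\ang{\lambda_j,\alpha_{13}^{\vee}}v_{\lambda_j}$.
\end{enumerate}
So the ``invariance argument'' you invoke afterwards rests on exactly what you have not yet shown.

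The missing step is already in your hands. Your computations in (i) and (ii) give $X_{12,j}w_{21,j}=(k_{1,j}-k_{2,j})u$ and $X_{23,j}w_{32,j}=(k_{2,j}-k_{3,j})u$. Hence $w_{21,j}=w_{32,j}=0$ forces $k_{1,j}=k_{2,j}=k_{3,j}$, so the torus term $(k_{1,j}-k_{3,j})s^{p^j}u$ vanishes. This is exactly how the paper argues.

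Alternatively, $U_{12}$- and $U_{23}$-invariance of $w_{31,j}\in\sigma$ already give $U_{13}$-invariance, since $u_{13}(st)$ is the commutator of $u_{12}(s)$ and $u_{23}(t)$.

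Either way, $w_{31,j}\in\sigma^{I_1}\cap\sigma^{\lambda+\alpha_{j_0}+\alpha_{31,j}}$, which is zero unless $\alpha=\alpha_{13}$ and $j=j_0$. No weight-space lemma is needed.
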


\begin{proof}
(i). Using the matrix identities (for $s,t\in\Fq\x$)
\begin{equation*}
\begin{aligned}
    u_{12}([s])u_{21}(p[t])&=u_{21}\bigbra{p[t](1\!+\!p[st])\inv}\op{diag}\bigbra{1\!+\!p[st],(1\!+\!p[st])\inv,1}u_{12}\bigbra{[s](1\!+\!p[st])\inv};\\
    u_{23}([s])u_{21}(p[t])&=u_{21}(p[t])u_{23}([s]);\\
    u_{13}([s])u_{21}(p[t])&=u_{21}(p[t])u_{23}(-p[st])u_{13}([s]),
\end{aligned}
\end{equation*}
we compute that (for $s\in\Fq\x$)
\begin{equation*}
\begin{aligned}
    u_{12}([s])w_{21,j}&=\ssum_{t\in\Fq\x}t^{-p^j}u_{12}([s])u_{21}(p[t])v\\
    &\hspace{-3em}=\ssum_{t\in\Fq\x}t^{-p^j}u_{21}\bigbra{p[t](1\!+\!p[st])\inv}\op{diag}\bigbra{1\!+\!p[st],(1\!+\!p[st])\inv,1}u_{12}\bigbra{[s](1\!+\!p[st])\inv}v\\
    &\hspace{-3em}=\ssum_{t\in\Fq\x}t^{-p^j}u_{21}\bigbra{p[t]}\op{diag}\bigbra{1\!+\!p[st],(1\!+\!p[st])\inv,1}v\quad\text{(since $V$ is invariant under $K_2$)}\\
    &\hspace{-3em}=\ssum_{t\in\Fq\x}t^{-p^j}u_{21}\bigbra{p[t]}\bbbra{v+\ssum_{j_1=0}^{f-1}\bigbra{k_{1,j_1}\bra{st}^{p^{j_1}}-k_{2,j_1}\bra{st}^{p^{j_1}}}u}\\
    &\hspace{-3em}=w_{21,j}+\ssum_{j_1=0}^{f-1}\bra{k_{1,j_1}-k_{2,j_1}}s^{p^{j_1}}\bbra{\ssum_{t\in\Fq\x}t^{p^{j_1}-p^j}}u=w_{21,j}-\bra{k_{1,j}-k_{2,j}}s^{p^j}u;\\
\end{aligned}
\end{equation*}
\begin{equation*}
\begin{aligned}
    u_{23}([s])w_{21,j}&=\ssum_{t\in\Fq\x}t^{-p^j}u_{23}([s])u_{21}(p[t])v=\ssum_{t\in\Fq\x}t^{-p^j}u_{21}(p[t])u_{23}([s])v\\
    &=\ssum_{t\in\Fq\x}t^{-p^j}u_{21}(p[t])v=w_{21,j};\\
    u_{13}([s])w_{21,j}&=\ssum_{t\in\Fq\x}t^{-p^j}u_{13}([s])u_{21}(p[t])v=\ssum_{t\in\Fq\x}t^{-p^j}u_{21}(p[t])u_{23}(-p[st])u_{13}([s])v\\
    &=\ssum_{t\in\Fq\x}t^{-p^j}u_{21}(p[t])v=w_{21,j}.
\end{aligned}
\end{equation*}
Hence we have (for $j_1\in\cJ$)
\begin{equation}\label{K-ext Eq I-structure on w21-2}
\begin{aligned}
    X_{12,j_1}w_{21,j}&=\ssum_{s\in\Fq\x}s^{-p^{j_1}}u_{12}([s])w_{21,j}\\
    &=\ssum_{s\in\Fq\x}s^{-p^{j_1}}\bigbra{w_{21,j}-\bra{k_{1,j}-k_{2,j}}s^{p^j}u}=\delta_{j=j_1}\bra{k_{1,j}-k_{2,j}};\\
    X_{23,j_1}w_{21,j}&=\ssum_{s\in\Fq\x}s^{-p^{j_1}}u_{23}([s])w_{21,j}=\ssum_{s\in\Fq\x}s^{-p^{j_1}}w_{21,j}=0;\\
    X_{13,j_1}w_{21,j}&=\ssum_{s\in\Fq\x}s^{-p^{j_1}}u_{13}([s])w_{21,j}=\ssum_{s\in\Fq\x}s^{-p^{j_1}}w_{21,j}=0.
\end{aligned}
\end{equation}
Suppose that $k_{1,j}=k_{2,j}$. Since $w_{21,j}\in\sigma$ is invariant under $K_1$, we deduce from \eqref{K-ext Eq I-structure on w21-2} that $w_{21,j}\in\sigma^{I_1}\cap\sigma^{\lambda+\alpha_{j_0}+\alpha_{21,j}}\subseteq\sigma^{\lambda}\cap\sigma^{\lambda+\alpha_{j_0}+\alpha_{21,j}}$, which equals zero unless $\alpha=\alpha_{12}$ and $j=j_0$.

\vspace{1em}

(ii). Similar to (i), using the matrix identities (for $s,t\in\Fq\x$)
\begin{equation*}
\begin{aligned}
    u_{12}([s])u_{32}(p[t])&=u_{32}(p[t])u_{12}([s]);\\
    u_{23}([s])u_{32}(p[t])&=u_{32}\bigbra{p[t](1\!+\!p[st])\inv}\op{diag}\bigbra{1,1\!+\!p[st],(1\!+\!p[st])\inv}u_{23}\bigbra{[s](1\!+\!p[st])\inv};\\
    u_{13}([s])u_{32}(p[t])&=u_{32}(p[t])u_{12}(p[st])u_{13}([s]),
\end{aligned}
\end{equation*}
we compute that (for $s\in\Fq\x$ and $j_1\in\cJ$)
\begin{equation}\label{K-ext Eq I-structure on w32}
\begin{split}
    u_{12}([s])w_{32,j}&=w_{32,j};\\
    u_{23}([s])w_{32,j}&=w_{32,j}-(k_{2,j}-k_{3,j})s^{p^j}u;\\
    u_{13}([s])w_{32,j}&=w_{32,j};
\end{split}
\hspace{3em}
\begin{split}
    X_{12,j_1}w_{32,j}&=0;\\
    X_{23,j_1}w_{32,j}&=\delta_{j=j_1}(k_{2,j}-k_{3,j})u;\\
    X_{13,j_1}w_{32,j}&=0.
\end{split}
\end{equation}
In particular, in the case $k_{2,j}=k_{3,j}$, by \eqref{K-ext Eq I-structure on w32} we have $w_{32,j}\in\sigma^{I_1}\cap\sigma^{\lambda+\alpha_{j_0}+\alpha_{32,j}}\subseteq\sigma^{\lambda}\cap\sigma^{\lambda+\alpha_{j_0}+\alpha_{32,j}}$, which equals zero unless $\alpha=\alpha_{23}$ and $j=j_0$.

\vspace{1em}

(iii). By Lemma \ref{K-ext Lem Iwasawa} and raising to $p$-th power, we have $u_{32}(p[t])v=v-\sum_{j=0}^{f-1}t^{p^j}w_{32,j}$ for all $t\in\Fq\x$. Then similar to (i), using the matrix identities (for $s,t\in\Fq\x$)
\begin{equation*}
\begin{aligned}
    u_{12}([s])u_{31}(p[t])&=u_{31}(p[t])u_{32}(-p[st])u_{12}([s]);\\
    u_{23}([s])u_{31}(p[t])&=u_{31}(p[t])u_{21}(p[st])u_{23}([s]);\\
    u_{13}([s])u_{31}(p[t])&=u_{31}\bigbra{p[t](1\!+\!p[st])\inv}\op{diag}\bigbra{1\!+\!p[st],1,(1\!+\!p[st])\inv}u_{13}\bigbra{[s](1\!+\!p[st])\inv},
\end{aligned}
\end{equation*}
we compute that (for $s\in\Fq\x$ and $j_1\in\cJ$)
\begin{equation}\label{K-ext Eq I-structure on w31}
\begin{split}
    u_{12}([s])w_{31,j}&=w_{31,j}-s^{p^j}w_{32};\\
    u_{23}([s])w_{31,j}&=w_{31,j}+s^{p^j}w_{21};\\
    u_{13}([s])w_{31,j}&=w_{31,j}-(k_{1,j}-k_{3,j})s^{p^j}u;
\end{split}
\hspace{3em}
\begin{split}
    X_{12,j_1}w_{31,j}&=\delta_{j=j_1}w_{32,j};\\
    X_{23,j_1}w_{31,j}&=-\delta_{j=j_1}w_{21,j};\\
    X_{13,j_1}w_{31,j}&=\delta_{j=j_1}(k_{1,j}-k_{3,j})u.
\end{split}
\end{equation}
Suppose that $w_{21,j}=w_{32,j}=0$. We deduce from \eqref{K-ext Eq I-structure on w21-2} and \eqref{K-ext Eq I-structure on w32} that $k_{1,j}=k_{2,j}=k_{3,j}$. Hence by \eqref{K-ext Eq I-structure on w31} have $w_{31,j}\in\sigma^{I_1}\cap\sigma^{\lambda+\alpha_{j_0}+\alpha_{31,j}}\subseteq\sigma^{\lambda}\cap\sigma^{\lambda+\alpha_{j_0}+\alpha_{31,j}}$, which equals zero unless $\alpha=\alpha_{13}$ and $j=j_0$. 
\end{proof}

\begin{lemma}\label{K-ext Lem w31=0}
    If $w_{31,j}=0$ for all $j$, then $V$ is invariant under $K_1$.
\end{lemma}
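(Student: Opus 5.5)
Assume $w_{31,j}=0$ for all $j$. The plan is to show that $K_1$ fixes $v$. Since $\sigma$ is $K_1$-invariant and $V=\sigma+\FF[K]v$, and $K_1$ is normal in $K$, this gives $K_1$-invariance of $V$: for $g\in K$ and $k\in K_1$ we have $k(gv)=g(g^{-1}kg)v=gv$.

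\textbf{Step 1: the $k$'s vanish.} Feeding $w_{31,j}=0$ into the formulas \eqref{K-ext Eq I-structure on w31} gives
\[
w_{32,j}=X_{12,j}w_{31,j}=0,\qquad w_{21,j}=-X_{23,j}w_{31,j}=0,\qquad (k_{1,j}-k_{3,j})u=X_{13,j}w_{31,j}=0.
\]
By Lemma \ref{K-ext Lem I-structure}(i),(ii), the vanishing of $w_{21,j}$ and $w_{32,j}$ forces $k_{1,j}=k_{2,j}$ and $k_{2,j}=k_{3,j}$. Together with $k_{1,j}+k_{2,j}+k_{3,j}=0$ and $p\geq5$ (so $3$ is invertible in $\FF$), this gives $k_{i,j}=0$. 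Lemma \ref{K-ext Lem Torus-action} then shows that $T_1$ fixes $v$.

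\textbf{Step 2: the lower unipotent part.} By Lemma \ref{K-ext Lem Iwasawa}, raised to the $p$-th power exactly as in the proof of Lemma \ref{K-ext Lem I-structure}(iii), we get
\[
u_\beta(p[t])v=v-\ssum_j t^{p^j}w_{\beta,j}=v
\]
for all $\beta\in\Phi^-$ and $t\in\Fq\x$. The step I expect to need care is passing from Teichmüller multiples to all of $U_\beta(p\OL)$. The orbit map $U_\beta(p\OL)\to V$ factors through $U_\beta(p\OL)/U_\beta(p^2\OL)\cong\Fq$, because $V$ is $K_2$-invariant. On this quotient the map $x\mapsto u_\beta(px)v-v$ lands in $\sigma$, and it is additive: for $x,y$ one has $u_\beta(px)(u_\beta(py)v-v)=u_\beta(py)v-v$, since that difference lies in $\sigma$, which is $K_1$-fixed. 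Hence the map is a group homomorphism $\Fq\to\sigma$. It vanishes on the elements $\ovl{[t]}=t$, which generate $\Fq$ additively, so $U_\beta(p\OL)$ fixes $v$.

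\textbf{Step 3: generation.} From Lemma \ref{K-ext Lem U invariant}, $U(\OL)\supseteq U(p\OL)$ fixes $v$. So $v$ is fixed by $U^-(p\OL)$, $T_1$ and $U(p\OL)$. By the Iwahori factorization $K_1=U^-(p\OL)\,T_1\,U(p\OL)$, the group $K_1$ fixes $v$. This concludes by the normality remark at the start.

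The only genuinely delicate point is the additivity argument in Step 2. It uses that $u_\beta(py)v-v\in\sigma$ and that $\sigma$ is inflated from $\GL_3(\Fq)$, hence $K_1$-fixed.
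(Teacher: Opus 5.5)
Your proof is correct, but it reaches the $K_1$-invariance of $v$ by a different route from the paper.

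The paper uses only two facts: $v$ is fixed by $U(\OL)$ and by $U_{31}(p\OL)$, and these two subgroups generate $I_1$. Strictly speaking they generate $I_1$ only up to $Z_1$, which is harmless since $Z_1$ acts trivially. The commutator relations hidden in that generation statement produce $U_{21}(p\OL)$, $U_{32}(p\OL)$ and the torus. This gives $v\in V^{I_1}$, and Frobenius reciprocity then yields a surjection $\Ind_I^K\FF v\to V$ from a $K_1$-invariant representation.

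You instead read off from \eqref{K-ext Eq I-structure on w31} and Lemma~\ref{K-ext Lem I-structure} that $w_{21,j}=w_{32,j}=0$ and $k_{1,j}=k_{2,j}=k_{3,j}=0$. In other words, you use the commutator computations the paper has already carried out rather than a generation statement. You then check invariance factor by factor in the Iwahori factorization $K_1=U^-(p\OL)\,T_1\,U(p\OL)$. Your final step ($K_1$ normal in $K$ and $V=\sigma+\FF[K]v$) has the same content as the paper's Frobenius reciprocity argument.

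What each buys:
\begin{itemize}
\item The paper's version is shorter and needs no information about the $k_{i,j}$.
\item Yours is more explicit. It also spells out, via the triviality of $K_2$ on $V$, the passage from the Teichm\"uller points $u_\beta(p[t])$ to all of $U_\beta(p\OL)$, which the paper leaves implicit.
\end{itemize}

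The additivity argument in your Step~2 is unnecessary. Every element of $\Fq$ is either $0$ or some $t\in\Fq\x$. So each coset of $U_\beta(p^2\OL)$ in $U_\beta(p\OL)$ already contains $1$ or some $u_\beta(p[t])$, and vanishing at those points suffices directly.
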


\begin{proof}
    In this case, $v$ is invariant under $U(\OL)$ and $U_{31}(p\OL)$. Since they generate $I_1$, we deduce that $v\in V^{I_1}$. Then by Frobenius reciprocity we have a $K$-equivariant map $\Ind_{I}^K\FF v\to V$, which is surjective since $v$ has nonzero image in $\tau$ that is the $K$-cosocle of $V$. Since $\Ind_{I}^K\FF v$ is $K_1$-invariant, we conclude that $V$ is also $K_1$-invariant.
\end{proof}

Now we are ready to prove Theorem \ref{K-ext Thm Calpha}. If $w_{31,j}=0$ for all $j$, then by Lemma \ref{K-ext Lem w31=0} we know that $V$ is invariant under $K_1$, as desired. From now on, we assume that $w_{31,j}\neq0$ for some $j$. If $\alpha\neq0$, then by Lemma \ref{K-ext Lem Torus-action} and Lemma \ref{K-ext Lem I-structure} we have $w_{31,j}=0$ for all $j\neq j_0$, hence we must have $w_{31,j_0}\neq0$. If $\alpha=0$, then we modify $j_0$ such that $w_{31,j_0}\neq0$. We are going to deduce a contradiction.

\vspace{1em}

We let $W\subseteq V$ be the $I$-subrepresentation generated by $v$. %By Lemma \ref{K-ext Lem Torus-action} and Lemma \ref{K-ext Lem I-structure}(i),(ii),(iii) (and its proof), we have the following possibilities:
%\begin{equation*}
    %W=
%\begin{cases}
    %\FF v\oplus\FF w_{31}\oplus\FF w_{32}\oplus\FF u&\text{if}~\alpha=0,~k_1=k_2\neq k_3\\
    %\FF v\oplus\FF w_{31}\oplus\FF w_{21}\oplus\FF u&\text{if}~\alpha=0,~k_1\neq k_2=k_3\\
    %\FF v\oplus\FF w_{31}\oplus\bigbra{\FF w_{21}\oplus\FF w_{32}}\oplus\FF u&\text{if}~\alpha=0,~k_1\neq k_2~\text{and}~k_2\neq k_3;
%\end{cases}
%\end{equation*}
%\begin{equation*}
    %W=
%\begin{cases}
    %\FF v\oplus\FF w_{31}=\FF v\oplus\FF u&\text{if}~\alpha=\alpha_{13}\\
    %\FF v\oplus\FF w_{31}\oplus\FF w_{21}=\FF v\oplus\FF w_{31}\oplus\FF u&\text{if}~\alpha=\alpha_{12}\\
    %\FF v\oplus\FF w_{31}\oplus\FF w_{32}=\FF v\oplus\FF w_{31}\oplus\FF u&\text{if}~\alpha=\alpha_{23}.
%\end{cases}
%\end{equation*}
Then by Frobenius reciprocity, the $I$-equivariant inclusion $W\into V$ induces a $K$-equivariant map
\begin{equation*}
    f:\Ind_I^{K}W\to V,
\end{equation*}
which is surjective since $v\in\op{Im}(f)$ has nonzero image in $\tau$ that is the $K$-cosocle of $V$. For $w\in W$, we write $[1,w]\in\Ind_I^K W$ the element supported on $I$ whose value at the identity matrix is given by $w$. We separate the cases $f\geq2$ and $f=1$.
\vspace{1em}

\noindent\textbf{Case 1.} $f\geq2$.

Since $\lambda\in C(\alpha,j_0)$ and $w_{31,j_0}\neq0$, by Lemma \ref{K-ext Lem I-structure} at least one of the following holds:
\begin{equation*}
\begin{split}
    \text{(i).}~&\alpha=\alpha_{13}~\text{and}~\lambda_{j_0-1,1}-\lambda_{j_0-1,2}\leq p-2;\\
    \text{(ii).}~&\alpha=\alpha_{12};\\
    \text{(iii).}~&\alpha=0~\text{and}~k_{1,j_0}\neq k_{2,j_0};
\end{split}
\hspace{2em}
\begin{split}
    \text{(iv).}~&\alpha=\alpha_{13}~\text{and}~\lambda_{j_0-1,2}-\lambda_{j_0-1,3}\leq p-2;\\
    \text{(v).}~&\alpha=\alpha_{23};\\
    \text{(vi).}~&\alpha=0~\text{and}~k_{2,j_0}\neq k_{3.j_0}.
\end{split}
\end{equation*}
From now on, we concentrate on the cases (i),(ii),(iii), the other cases being symmetric. We consider the element
%\begin{equation*}
    %\text{(i).~$\alpha=\alpha_{13}$ and $\lambda_1-\lambda_2\leq p-4$;\quad(ii).~$\alpha=\alpha_{12}$;\quad(iii).~$\alpha=0$ and $k_1\neq k_2$.}
%\end{equation*}
%\begin{equation*}
    %y\eqdef\bbra{\sprod_{j\neq j_0-1}X_{j}}X_{13,j_0-1}^{p-1}X_{12,j_0-1}^{m}\smat{&&1\\&1&\\1&&}[1,v]\in\Ind_I^K W,
%\end{equation*}
\begin{equation*}
    y\eqdef\bbra{\sprod_{j=0}^{f-1}X_{13,j}^{m_j}X_{12,j}^{n_j}}\smat{&&1\\&1&\\1&&}[1,v]\in\Ind_I^K W
\end{equation*}
with
%\begin{equation*}
%\begin{aligned}
    %m_j&=
%\begin{cases}
    %\lambda_{j,1}-\lambda_{j,2}&\text{if}~j\neq j_0,j_0-1\\
    %\lambda_{j,1}-\lambda_{j,2}-1+\alpha_1&\text{if}~j=j_0\\
    %p-1&\text{if}~j=j_0-1;
%\end{cases}\\
    %n_j&=
%\begin{cases}
    %\lambda_{j,2}-\lambda_{j,3}&\text{if}~j\neq j_0,j_0-1\\
    %\lambda_{j,2}-\lambda_{j,3}+\alpha_2&\text{if}~j=j_0\\
    %\max\set{0,\lambda_{j_0-1,1}-\lambda_{j_0-1,3}-(p-2)}&\text{if}~j=j_0-1.
%\end{cases}
%\end{aligned}
%\end{equation*}
\begin{equation}\label{K-ext Eq mjnj}
    (m_j,n_j)=
\begin{cases}
    \bigbra{\lambda_{j,1}-\lambda_{j,2},\lambda_{j,2}-\lambda_{j,3}}&\text{if}~j\neq j_0,j_0-1\\
    \bigbra{\lambda_{j_0,1}-\lambda_{j_0,2}-1+\alpha_1,\lambda_{j_0,2}-\lambda_{j_0,3}+\alpha_2}&\text{if}~j=j_0\\
    \bigbra{p-1,\max\set{0,\lambda_{j_0-1,1}-\lambda_{j_0-1,3}-(p-2)}}&\text{if}~j=j_0-1.
\end{cases}
\end{equation}
%\begin{equation*}
%\begin{aligned}
    %X_j&\eqdef
%\begin{cases}
    %X_{13,j}^{\lambda_{j,1}-\lambda_{j,2}}X_{12,j}^{\lambda_{j,2}-\lambda_{j,3}}&\text{if}~j\neq j_0,j_0-1\\
    %X_{13,j}^{\lambda_{j,1}-\lambda_{j,2}-1+\alpha_1}X_{12,j}^{\lambda_{j,2}-\lambda_{j,3}+\alpha_2}&\text{if}~j=j_0;
%\end{cases}\\
    %m&\eqdef\max\set{0,\lambda_{j_0-1,1}-\lambda_{j_0-1,3}-(p-2)}.
%\end{aligned}
%\end{equation*}
%We also write $\ovl{X}_j\eqdef\smat{&&1\\&1&\\1&&}X_j\smat{&&1\\&1&\\1&&}$ for all $j\neq j_0$.

\begin{lemma}\label{K-ext Lem image zero}
    The element $f(y)\in V$ has image zero in $\tau$.
\end{lemma}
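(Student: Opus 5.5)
The plan is to reduce the claim to a vanishing statement inside the Serre weight $\tau$, and then to obtain that vanishing from Lemma \ref{K-ext Lem operator mapsto zero} after conjugating by the antidiagonal matrix.

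\textbf{Step 1: reduce to a computation in $\tau$.} Let $\ovl{v}\in\tau^{I_1}=L(\lambda+\alpha_{j_0})_{\lambda+\alpha_{j_0}}$ be the image of $v$, and write $w_0\eqdef\smat{&&1\\&1&\\1&&}$. The map $f$ is $K$-equivariant and $f([1,v])=v$, so $f(y)=\bigbra{\prod_{j}X_{13,j}^{m_j}X_{12,j}^{n_j}}w_0v$. Composing with $V\to\tau$, the image of $f(y)$ in $\tau$ is
\begin{equation*}
\bbra{\sprod_{j=0}^{f-1}X_{13,j}^{m_j}X_{12,j}^{n_j}}w_0\ovl{v}.
\end{equation*}
Here $\tau$ is inflated from $\GL_3(\Fq)$, so $u_\alpha([t])$ acts on it through the algebraic action of $u_\alpha(t)$.

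\textbf{Step 2: conjugate by $w_0$.} We have $w_0u_{13}(x)w_0=u_{31}(x)$ and $w_0u_{12}(x)w_0=u_{32}(x)$. Therefore $w_0X_{13,j}w_0=X_{31,j}$ and $w_0X_{12,j}w_0=X_{32,j}$. It follows that the image of $f(y)$ equals
\begin{equation*}
w_0\bbra{\sprod_{j}X_{31,j}^{m_j}X_{32,j}^{n_j}}v_{\lambda+\alpha_{j_0}}
\end{equation*}
in $\tau=F(\lambda+\alpha_{j_0})$. Since $w_0$ is invertible, it suffices to show that $\bigbra{\prod_jX_{31,j}^{m_j}X_{32,j}^{n_j}}v_{\lambda+\alpha_{j_0}}=0$. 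This is exactly the form treated by Lemma \ref{K-ext Lem operator mapsto zero}, which I will apply with $\alpha=\alpha_{32}$, with the weight $\lambda+\alpha_{j_0}$ in place of $\lambda$, and with $j_0-1$ in place of $j_0$. Before doing so, I will check two things. First, $\lambda+\alpha_{j_0}\in X_1(\un{T})$, which holds because $\lambda\in C(\alpha,j_0)$ makes $(\lambda_{j_0},\lambda_{j_0}+\alpha)$ a good pair. Second, all $m_j,n_j\geq0$, which follows from $\lambda_{j_0}\in C(\alpha)^+$; for instance, $\alpha_1=0$ forces $\lambda_{j_0,1}-\lambda_{j_0,2}\geq1$ in the relevant cases.

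\textbf{Step 3: verify the numerical hypotheses.} Write $d_j$ for the $(1,3)$-gap of $\mu\eqdef\lambda+\alpha_{j_0}$, so $d_j=\mu_{j,1}-\mu_{j,3}$.
\begin{itemize}
\item For $j\neq j_0,j_0-1$ we have $m_j+n_j=d_j$.
\item For $j=j_0$ we have $m_{j_0}+n_{j_0}=\lambda_{j_0,1}-\lambda_{j_0,3}-1+\alpha_1+\alpha_2$. Using $\alpha_1+\alpha_2+\alpha_3=0$, this is $d_{j_0}-(1+\alpha_1)$, so the deficit is $1+\alpha_1\leq2$.
\item For $j=j_0-1$ we have $m_{j_0-1}+n_{j_0-1}=p-1+\max\set{0,d_{j_0-1}-(p-2)}$. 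This is at least $d_{j_0-1}+1$ in both cases: if $d_{j_0-1}\leq p-2$ it equals $p-1$, and otherwise it equals $d_{j_0-1}+1$.
\end{itemize}
The last point is hypothesis (ii) of Lemma \ref{K-ext Lem operator mapsto zero} at the index $j_0-1$. Hypothesis (i) asks that $\sum_{j\neq j_0-1}(m_j+n_j)\geq\sum_{j\neq j_0-1}d_j-(p-1)$. The left side falls short of $\sum_{j\neq j_0-1}d_j$ by exactly $1+\alpha_1\leq2\leq p-1$, so (i) holds. The lemma then gives the vanishing, and hence $f(y)$ has zero image in $\tau$.

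\textbf{Main obstacle.} The delicate part is the bookkeeping. One must confirm that the conjugation really turns $X_{12,j}$ into $X_{32,j}$ with the same $t^{-p^j}$ twist, which holds because $w_0$ fixes the Teichmüller parameter. One must also check that the ordering of operators in the product matches the one in Lemma \ref{K-ext Lem operator mapsto zero}; that lemma's proof only uses weight considerations, so the ordering should not matter. Finally, nonnegativity of $m_{j_0},n_{j_0}$ needs a case check in each of the cases (i)–(iii).
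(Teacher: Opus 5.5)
Your proposal is correct and is essentially the paper's own proof. Conjugating by $\smat{&&1\\&1&\\1&&}$ turns the image of $f(y)$ in $\tau$ into $\bigl(\prod_j X_{31,j}^{m_j}X_{32,j}^{n_j}\bigr)$ applied to the highest weight vector of $\tau$, and the same three inequalities on $m_j+n_j$ then let you apply Lemma~\ref{K-ext Lem operator mapsto zero} to $\tau$ at the index $j_0-1$: equality away from $j_0,j_0-1$, a deficit of $1+\alpha_1\leq 2\leq p-1$ at $j_0$, and $m_{j_0-1}+n_{j_0-1}\geq \lambda_{j_0-1,1}-\lambda_{j_0-1,3}+1$.

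One incidental slip: in case (iii), $m_{j_0}\geq0$ does not follow from $C(0)^+=X_1(T)$, which imposes nothing. It follows instead from $w_{21,j_0}\neq0$: by Lemma~\ref{K-ext Lem lambda-alpha12} this vector lies in $L(\lambda)_{\lambda-\alpha_{12,j_0}}$, which forces $\lambda_{j_0,1}-\lambda_{j_0,2}\geq1$. This only affects whether $y$ is well defined, which the paper takes for granted.
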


\begin{proof}
    By definition, we have 
    \begin{equation*}
        f(y)=\bbra{\sprod_{j=0}^{f-1}X_{13,j}^{m_j}X_{12,j}^{n_j}}\smat{&&1\\&1&\\1&&}v=\smat{&&1\\&1&\\1&&}\bbra{\sprod_{j=0}^{f-1}X_{31,j}^{m_j}X_{32,j}^{n_j}}v.
    \end{equation*}
    %\begin{equation*}
    %\begin{aligned}
        %f(y)&=\bbra{\sprod_{j\neq j_0-1}X_j}X_{13,j_0-1}^{p-1}X_{12,j_0-1}^{m}\smat{&&1\\&1&\\1&&}v\\
        %&=\smat{&&1\\&1&\\1&&}(???)X_{31,j_0-1}^{p-1}X_{32,j_0-1}^{m}v\in V.
    %\end{aligned}
    %\end{equation*}
    Since $v$ maps into $\tau^{I_1}$ in $\tau$ and since
    \begin{equation*}
    \begin{aligned}
        &m_j+n_j=\lambda_{j,1}-\lambda_{j,3}\quad\forall\,j\neq j_0,j_0-1;\\
        &m_{j_0}\!+\!n_{j_0}=\bra{\lambda_{j_0,1}\!-\!\lambda_{j_0,2}\!-\!1\!+\!\alpha_1}+\bra{\lambda_{j_0,2}\!-\!\lambda_{j_0,3}\!+\!\alpha_2}\geq\bra{\lambda_{j_0,1}\!+\!\alpha_1}-\bra{\lambda_{j_0,3}\!-\!\alpha_3}-(p\!-\!1);\\
        &m_{j_0-1}\!+\!n_{j_0-1}\geq(p\!-\!1)+\bra{\lambda_{j_0-1,1}\!-\!\lambda_{j_0-1,3}\!-\!(p\!-\!2)}=\lambda_{j_0-1,1}-\lambda_{j_0-1,3}+1,
    \end{aligned}
    \end{equation*}
    we conclude by applying Lemma \ref{K-ext Lem operator mapsto zero} to $\tau$.
\end{proof}

By Lemma \ref{K-ext Lem image zero}, we have $f(y)\in\sigma$. Then by Lemma \ref{K-ext Y changes characters} we have $f(y)\in\sigma^{\mu}$ with
\begin{equation*}
    \mu\eqdef w_0\bra{\lambda+\alpha_{j_0}}+\ssum_{j=0}^{f-1}\bigbra{m_j\alpha_{13,j}+n_j\alpha_{12,j}}.
\end{equation*}

\begin{lemma}\label{K-ext Lem f(y) first weight}
    We have $f(y)\in\oplus_{\nu}L(\lambda)_{\nu}$ with each $\nu$ satisfying $\nu_{j,2}\neq\mu_{j,2}$ for some $j$.
\end{lemma}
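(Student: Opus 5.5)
We will analyse $f(y)$ through the $H$-eigenspace $\sigma^{\mu}$ and its decomposition into weight spaces, in the same way as in the proofs of Lemma \ref{K-ext Lem lambda-alpha12} and Lemma \ref{K-ext Lem character-weight same}. First, by Lemma \ref{K-ext Lem image zero} and Lemma \ref{K-ext Y changes characters}, $f(y)\in\sigma^{\mu}=\oplus_{\nu}L(\lambda)_{\nu}$. Here $\nu$ runs over weights with $L(\lambda)_{\nu}\neq0$ of the form
\begin{equation*}
    \nu=\mu-\ssum_{j=0}^{f-1}\bigbra{\bra{pc_{1,j}-c_{1,j-1}}\alpha_{12,j}+\bra{pc_{2,j}-c_{2,j-1}}\alpha_{23,j}},\qquad \un{c}_1,\un{c}_2\in\ZZ^f.
\end{equation*}
It then suffices to show that the component of $f(y)$ in every $L(\lambda)_{\nu}$ with $\nu_{j,2}=\mu_{j,2}$ for all $j$ vanishes.

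\textbf{Step 1.} We classify these "bad" weights. The condition $\nu_{j,2}=\mu_{j,2}$ for all $j$ says $p(c_{1,j}-c_{2,j})-(c_{1,j-1}-c_{2,j-1})=0$ for all $j$. Inverting $p-\pi$ gives $\un{c}_1=\un{c}_2=:\un{c}$, so $\nu=\mu-\sum_j(pc_j-c_{j-1})\alpha_{13,j}$. We then compute $\mu$ explicitly from \eqref{K-ext Eq mjnj}. For $j\neq j_0,j_0-1$ we get $\mu_j=(\lambda_{j,2},\lambda_{j,3},\lambda_{j,1})$, since $w_0\lambda_j+m_j\alpha_{13}+n_j\alpha_{12}$ has this form. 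There are analogous formulas at $j_0$ and $j_0-1$; in particular $\mu_{j_0-1,1}=\lambda_{j_0-1,3}+p-1+n_{j_0-1}$. Imposing $\lambda_{j,3}\le\nu_{j,1},\nu_{j,3}\le\lambda_{j,1}$ and using $c_j=[\sum_i p^{f-1-i}(pc_{j-i}-c_{j-i-1})]/(q-1)$ as before, we aim to show $0\le c_j\le1$. The resulting sign constraints should then force $\un{c}\in\{\un{0},\un{1}\}$, except possibly for "partial" choices supported on $j_0-1$. This is because $\mu_{j_0-1,1}$ exceeds $\lambda_{j_0-1,1}$ by the choice $n_{j_0-1}=\max\{0,\lambda_{j_0-1,1}-\lambda_{j_0-1,3}-(p-2)\}$, so at $j_0-1$ we need $pc_{j_0-1}-c_{j_0-2}\ge1$. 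We expect exactly that $L(\lambda)_{\mu}=0$, and that the only bad weights are of the shape $\nu^{\ast}=\mu-\sum_{j}(pc_j-c_{j-1})\alpha_{13,j}$ with $c_{j_0-1}=1$ and $\un{c}$ an interval ending at $j_0-1$, much like the weights $\nu^{j_1}$ of Lemma \ref{K-ext Lem lambda-alpha13}.

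\textbf{Step 2.} We kill the components of $f(y)$ in these weights by an injectivity argument. For each such $\nu^{\ast}$, the $(j_0-1)$-component satisfies $\ang{\nu^{\ast}_{j_0-1},\alpha_{13}^{\vee}}<0$, and $\nu^{\ast}_{j_0-1}+i\alpha_{13}\nleq\lambda_{j_0-1}$ for $i\ge p$. By Lemma \ref{K-ext Lem Xalg injective}, $X_{13}^{\op{alg}}$ is therefore injective on $L(\lambda_{j_0-1})_{\nu^{\ast}_{j_0-1}}$. If the remaining good components can be handled, Lemma \ref{K-ext Lem Xalpha injective}(ii) then reduces the claim to showing that $X_{13,j_0-1}f(y)$ has no component in the corresponding shifted weights.

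\textbf{Step 3 (main obstacle).} We compute $X_{13,j_0-1}f(y)$. Since $m_{j_0-1}=p-1$, this produces $X_{13,j_0-1}^{p}$ applied to $w_0v$ (the operators $X_{13,j}$ commute with each other and with $X_{12,j}$ up to terms in the $X_{13}$'s). Conjugating by $w_0$ turns $X_{13,j_0-1}^p$ into $X_{31,j_0-1}^p$, so it converts $v$ into $w_{31,j_0}$ modulo $K_2$-invariance (Lemma \ref{K-ext Lem Iwasawa}). Thus $X_{13,j_0-1}f(y)$ is expressed through $w_{31,j_0}$, $w_{21,j_0}$, $w_{32,j_0}$ and $u$, using \eqref{K-ext Eq I-structure on w21-2}--\eqref{K-ext Eq I-structure on w31}. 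The delicate part is to track the weights of these vectors, which lie in $\sigma^{\lambda+\alpha_{j_0}+\beta_{j_0}}$ and are identified with single weight spaces by Lemma \ref{K-ext Lem character-weight same}. After applying the remaining operators $X_{13,j}^{m_j}X_{12,j}^{n_j}$, via the expansion \eqref{K-ext Eq Lem Xalpha injective 1}, every resulting term should have a second coordinate differing from $\mu_{j,2}$ at some $j$. If the direct computation becomes unwieldy, I would first try to prove the weaker statement that the $L(\lambda)_{\nu^{\ast}}$-components lie in the kernel of $X_{13,j_0-1}$ by a pure weight count, and only then appeal to the injectivity in Step 2.
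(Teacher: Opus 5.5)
Your Step 1 is the right idea, and carried out correctly it is the whole proof. The gap is that you stop short, predict the wrong outcome, and are led into Steps 2--3. Also, for $j\neq j_0,j_0-1$ one has $\mu_j=(\lambda_{j,1},\lambda_{j,3},\lambda_{j,2})$, not $(\lambda_{j,2},\lambda_{j,3},\lambda_{j,1})$.

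Once $\un{c}_1=\un{c}_2=\un{c}$, put $z_j\eqdef pc_j-c_{j-1}$, so that $\nu_j=\mu_j-z_j\alpha_{13}$. You must use the upper bound $\nu_{j,3}\le\lambda_{j,1}$ together with the bounds on $\nu_{j,1}$. They give:
\begin{itemize}
\item $0\le z_j\le\lambda_{j,1}-\lambda_{j,2}\le p-1$ for $j\neq j_0,j_0-1$;
\item $1\le z_{j_0-1}\le p-1$;
\item $-1\le z_{j_0}\le\lambda_{j_0,1}-\lambda_{j_0,2}-1\le p-2$.
\end{itemize}
Inverting $p-\pi$ then forces $\un{c}=\un{0}$, i.e. $\nu=\mu$. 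This is impossible, since $\mu_{j_0-1,1}=\lambda_{j_0-1,3}+(p-1)+n_{j_0-1}\ge\lambda_{j_0-1,1}+1$ gives $L(\lambda)_{\mu}=0$. So $\sigma^{\mu}$ contains no nonzero $L(\lambda)_{\nu}$ with $\nu_{j,2}=\mu_{j,2}$ for all $j$, and the lemma follows from $f(y)\in\sigma^{\mu}$ alone. This pure weight count is the paper's proof.

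Your ``interval'' weights survive only the $\nu_{j,1}$-constraints; the $\nu_{j,3}$-constraint kills them. If the interval starts at $j_1$, then $z_{j_1}=p$, and if $\un{c}=\un{1}$, then $z_{j_0}=p-1$. For example, for $\un{c}$ supported at $j_0-1$ one gets $\nu_{j_0-1,3}=\lambda_{j_0-1,1}+1$.

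Steps 2--3 are not just unnecessary; they cannot be completed. Take $\nu^{\ast}=\mu-p\alpha_{13,j_0-1}+\alpha_{13,j_0}$, your interval $\{j_0-1\}$. Its shifted weight is $\nu^{\ast}+\alpha_{13,j_0-1}=\mu'$. Moreover $X_{13,j_0-1}f(y)=f(y')$ is exactly the element that Lemma~\ref{K-ext Lem f(y) second weight} shows has a \emph{nonzero} component in $L(\lambda)_{\mu'}$. It does so by the very computation through $w_{31,j_0}$ that you propose in Step 3. So Step 3 would be trying to prove the negation of that lemma, and a direct computation there will produce a nonzero answer. The vanishing you need can only come from the weight count. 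In the paper, the two lemmas are the two incompatible halves of the final contradiction, so the first cannot be derived by computing $f(y')$.
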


\begin{proof}
    By definition, we have
    \begin{equation*}
        \mu_j=
    \begin{cases}
        \bigbra{\lambda_{j,1},\lambda_{j,3},\lambda_{j,2}}&\text{if}~j\neq j_0,j_0-1\\
        \bigbra{\lambda_{j_0,1}-1,\lambda_{j_0,3},\lambda_{j_0,2}+1}&\text{if}~j=j_0\\
        \bigbra{\lambda_{j_0-1,3}+(p-1)+n_{j_0-1},\lambda_{j_0-1,2}-n_{j_0-1},\lambda_{j_0-1,1}-(p-1)}&\text{if}~j=j_0-1.
    \end{cases}
    \end{equation*}
    Let $\nu\in X(\un{T})$ such that $0\neq L(\lambda)_{\nu}\subseteq\sigma^{\mu}$. Then as in \eqref{K-ext Eq Lem lambda-alpha12 1} there exist $\un{c}_1,\un{c}_2\in\ZZ^f$ such that 
    \begin{equation*}
        \nu=\mu-\ssum_{j=0}^{f-1}\bigbra{\bra{pc_{1,j}-c_{1,j-1}}\alpha_{12,j}+\bra{pc_{2,j}-c_{2,j-1}}\alpha_{23,j}}.
    \end{equation*}
    Since $\lambda_{j,3}\leq\nu_{j,1},\nu_{j,3}\leq\lambda_{j,1}$ for all $j$, as in the proof of Lemma \ref{K-ext Lem lambda-alpha12} we deduce that $-1\leq c_{1,j}\leq0$ and $0\leq c_{2,j}\leq1$ for all $j$. 
    
    Suppose on the contrary that $\nu_{j,2}=\mu_{j,2}$ for all $j$, then we have $c_{1,j}=c_{2,j}$ for all $j$, hence $c_{1,j}=c_{2,j}=0$ for all $j$ and $\nu=\mu$. Since $\mu_{j_0-1,1}=\lambda_{j_0-1,3}+(p-1)+n_{j_0-1}\geq\lambda_{j_0-1,1}+1$, we have $L(\lambda)_{\mu}=0$, which is a contradiction.
\end{proof}

Next, we consider the element (note that these operators commute with each other)
\begin{equation*}
    y'\eqdef X_{13,j_0}y=\bbra{\sprod_{j\neq j_0-1}X_{13,j}^{m_j}X_{12,j}^{n_j}}X_{12,j_0-1}^{n_{j_0-1}}\smat{&&1\\&1&\\1&&}[1,w_{31,j_0}]\in\Ind_I^K W.
\end{equation*}
Since $w_{31,j_0}\in\sigma^{\lambda+\alpha_{j_0}-\alpha_{13,j_0}}$, by Lemma \ref{K-ext Y changes characters} we have $f(y')\in\sigma^{\mu'}$ with
\begin{equation*}
    \mu'\eqdef w_0\bra{\lambda+\alpha_{j_0}-\alpha_{13,j_0}}+\ssum_{j\neq j_0-1}\bigbra{m_j\alpha_{13,j}+n_j\alpha_{12,j}}+n_{j_0-1}\alpha_{12,j_0-1}.
\end{equation*}
In particular, we have $\mu'_{j,2}=\mu_{j,2}$ for all $j$.

\begin{lemma}\label{K-ext Lem f(y) second weight}
    The element $f(y')\in\sigma$ has nonzero image in $L(\lambda)_{\mu'}$.
\end{lemma}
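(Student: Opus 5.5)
The plan is to compute $f(y')$ explicitly. By definition,
\begin{equation*}
    f(y')=\smat{&&1\\&1&\\1&&}\bbra{\sprod_{j\neq j_0-1}X_{31,j}^{m_j}X_{32,j}^{n_j}}X_{32,j_0-1}^{n_{j_0-1}}\,w_{31,j_0},
\end{equation*}
and $w_{31,j_0}\in\sigma$ is nonzero. Conjugation by $\smat{&&1\\&1&\\1&&}$ carries $L(\lambda)_{\nu}$ to $L(\lambda)_{w_0\nu}$. So it suffices to show two things: that $w_{31,j_0}$ lies in a single algebraic weight space, and that the algebraic leading term of the operator acting on it is nonzero.

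\emph{Step 1: $w_{31,j_0}$ is a pure weight vector.} We have $w_{31,j_0}\in\sigma^{\lambda+\alpha_{j_0}-\alpha_{13,j_0}}$, and I would treat the cases (i), (ii), (iii) separately.
\begin{enumerate}
    \item[(i)] If $\alpha=\alpha_{13}$, the character is $\chi_\lambda$. By \eqref{K-ext Eq I-structure on w31} and Lemma \ref{K-ext Lem Torus-action}, $w_{31,j_0}$ is killed by all $X_{\beta,j}$ with $\beta\in\Phi^+$, since $w_{21},w_{32}$ vanish and the $k$'s are zero. Hence $w_{31,j_0}\in\sigma^{I_1}=\FF u$, so $w_{31,j_0}$ is a nonzero multiple of $u=v_\lambda$.
    \item[(ii)] If $\alpha=\alpha_{12}$, the character is that of $\lambda-\alpha_{23,j_0}$. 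Here $X_{12,j_0}w_{31,j_0}=w_{32,j_0}=0$ by Lemma \ref{K-ext Lem I-structure}(ii), and $X_{13,j_0-1}w_{31,j_0}=0$ by \eqref{K-ext Eq I-structure on w31} since $j_0-1\neq j_0$. Lemma \ref{K-ext Lem lambda-alpha12} (with $\alpha_{23}$ in place of $\alpha$) then gives $w_{31,j_0}\in L(\lambda)_{\lambda-\alpha_{23,j_0}}$.
    \item[(iii)] If $\alpha=0$, the character is that of $\lambda-\alpha_{13,j_0}$. We have $X_{12,j_0-1}w_{31,j_0}=0$ because $f\geq2$, so Lemma \ref{K-ext Lem lambda-alpha13} gives $w_{31,j_0}\in L(\lambda)_{\lambda-\alpha_{13,j_0}}$. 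Moreover $X_{23,j_0}w_{31,j_0}=-w_{21,j_0}\neq0$ by Lemma \ref{K-ext Lem I-structure}(i), since $k_{1,j_0}\neq k_{2,j_0}$. Via Lemma \ref{K-ext Lem Y maps into} this means $X_{23}^{\op{alg}}$ does not kill the $j_0$-component.
\end{enumerate}
In every case $w_{31,j_0}=\otimes_j w_j$, with $w_j=v_{\lambda_j}$ for $j\neq j_0$ and $w_{j_0}$ an explicit vector in $L(\lambda_{j_0})$.

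\emph{Step 2: extract the $\mu'$-component.} Apply the expansion \eqref{K-ext Eq Lem Xalpha injective 1} to each factor $X_{31,j}$ and $X_{32,j}$ in turn. Each application produces the leading term $-X^{(j),\op{alg}}$ plus terms whose weight is shifted by at least $p$ further root steps in total. After conjugating by $w_0$, the leading term lands exactly in weight $\mu'$. Higher terms land in weights $\nu\neq\mu'$: comparing the sum $\ssum_j\ang{\nu_j,\alpha_{13}^\vee}$ shows they are strictly further from $\mu'$. So the $\mu'$-component of $f(y')$ is, up to sign,
\begin{equation*}
    w_0\Bigl(\otimes_j \bigl(X_{31}^{\op{alg}}\bigr)^{m_j}\bigl(X_{32}^{\op{alg}}\bigr)^{n_j}w_j\Bigr),
\end{equation*}
with $m_{j_0-1}$ replaced by $0$. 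All exponents are $\leq p-1$, so the divided powers differ from ordinary powers only by units.

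\emph{Step 3: nonvanishing factor by factor.}
\begin{enumerate}
    \item[$\bullet$] For $j\neq j_0,j_0-1$, Lemma \ref{K-ext Lem nonzero alpha13} with $\alpha=\alpha_{23}$ gives $(X_{31}^{\op{alg}})^{\lambda_{j,1}-\lambda_{j,2}}(X_{32}^{\op{alg}})^{\lambda_{j,2}-\lambda_{j,3}}v_{\lambda_j}\neq0$.
    \item[$\bullet$] For $j_0-1$, only $(X_{32}^{\op{alg}})^{n_{j_0-1}}v_{\lambda_{j_0-1}}$ appears, with $n_{j_0-1}\leq\lambda_{j_0-1,2}-\lambda_{j_0-1,3}$. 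In case (iii) I would check this inequality against the membership $\lambda\in C(0,j_0)$. Nonvanishing then follows from \cite[Lemma~2.4.12]{Enn18} together with Lemma \ref{K-ext Lem L=V}.
    \item[$\bullet$] For $j_0$, the exponents are $(m_{j_0},n_{j_0})$ from \eqref{K-ext Eq mjnj}. Case (i) is Lemma \ref{K-ext Lem nonzero alpha13} applied to $\lambda_{j_0}$. Case (ii) is Lemma \ref{K-ext Lem nonzero alpha12and23}, with exponents $\lambda_{j_0,1}-\lambda_{j_0,2}$ and $\lambda_{j_0,2}-\lambda_{j_0,3}-1$ acting on $w_{j_0}\in L(\lambda_{j_0})_{\lambda_{j_0}-\alpha_{23}}$. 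Case (iii) is Lemma \ref{K-ext Lem nonzero alpha=0} with $\alpha=\alpha_{23}$, using $X_{23}^{\op{alg}}w_{j_0}\neq0$ from Step 1.
\end{enumerate}
A tensor product of nonzero vectors is nonzero, which gives the claim.

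I expect the main obstacle to be the bookkeeping in Step 2, together with matching the roles of $\alpha_{12}$ and $\alpha_{23}$ after conjugation by $w_0$ to the hypotheses of the Enns-type lemmas. Specifically, one must verify that no cross term in the expansion returns exactly to weight $\mu'$. I would handle this with the same minimal/maximal $\alpha_{13}$-pairing argument used in the proof of Lemma \ref{K-ext Lem Xalpha injective}(ii).
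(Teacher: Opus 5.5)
Your proposal is correct and follows the paper's proof essentially step for step. You first show that $w_{31,j_0}$ is a pure weight vector, via $\sigma^{I_1}=\FF u$ when $\alpha=\alpha_{13}$ and via Lemmas~\ref{K-ext Lem lambda-alpha12} and~\ref{K-ext Lem lambda-alpha13} otherwise; you then isolate the algebraic leading term using \eqref{K-ext Eq Lem Xalpha injective 1}, and check nonvanishing factor by factor with Lemmas~\ref{K-ext Lem nonzero alpha13}, \ref{K-ext Lem nonzero alpha12and23} and~\ref{K-ext Lem nonzero alpha=0}. In case (ii) you correctly give Lemma~\ref{K-ext Lem lambda-alpha12} the hypothesis $X_{12,j_0}w_{31,j_0}=w_{32,j_0}=0$; the paper's text writes $X_{23,j_0}$ there, which is in fact $-w_{21,j_0}\neq0$ in that case. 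Your explicit $\sum_j\langle\nu_j,\alpha_{13}^\vee\rangle$ comparison simply spells out the paper's one-line appeal to \eqref{K-ext Eq Lem Xalpha injective 1}.
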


\begin{proof}
    By definition, we have
    \begin{equation*}
    \begin{aligned}
        f(y')&=\bbra{\sprod_{j\neq j_0-1}X_{13,j}^{m_j}X_{12,j}^{n_j}}X_{12,j_0-1}^{n_{j_0-1}}\smat{&&1\\&1&\\1&&}w_{31,j_0}\\
        &=\smat{&&1\\&1&\\1&&}\bbra{\sprod_{j\neq j_0-1}X_{31,j}^{m_j}X_{32,j}^{n_j}}X_{32,j_0-1}^{n_{j_0-1}}w_{31,j_0}\in\sigma\subseteq V.
    \end{aligned}
    \end{equation*}
    By \eqref{K-ext Eq I-structure on w21-2}, \eqref{K-ext Eq I-structure on w32} and \eqref{K-ext Eq I-structure on w31} we have $w_{31,j_0}\in\sigma^{I_1}$ (resp.\;$X_{23,j_0}w_{31,j_0}=X_{13,j_0-1}w_{31,j_0}=0$, $X_{12,j_0-1}w_{31,j_0}=0$) if $\alpha=\alpha_{13}$ (resp.\;$\alpha=\alpha_{12}$, $\alpha=0$). Hence by Lemma \ref{K-ext Lem lambda-alpha12} and Lemma \ref{K-ext Lem lambda-alpha13} we have
    \begin{equation*}
        0\neq w_{31,j_0}\in L(\lambda)_{\lambda+\alpha_{j_0}-\alpha_{13,j_0}}=L(\lambda_0)_{\lambda_0}\otimes\cdots\otimes L(\lambda_{j_0})_{\lambda_{j_0}+\alpha-\alpha_{13}}\otimes\cdots\otimes L(\lambda_{f-1})_{\lambda_{f-1}}.
    \end{equation*}
    Since each $L(\lambda_j)_{\lambda_j}$ is $1$-dimensional, we can write $w_{31,j_0}=v_{\lambda_0}\otimes\cdots\otimes w\otimes\cdots\otimes v_{\lambda_{f-1}}$ with $w\in L(\lambda_{j_0})_{\lambda_{j_0}+\alpha-\alpha_{13}}$. We claim that (see \eqref{K-ext Eq mjnj} for $m_j$ and $n_j$)
    \begin{enumerate}
        \item 
        $\bigbra{X_{31}^{\op{alg}}}^{m_j}\bigbra{X_{32}^{\op{alg}}}^{n_j}v_{\lambda_{j}}\neq0$ for $j\neq j_0,j_0-1$;
        \item 
        $\bigbra{X_{32}^{\op{alg}}}^{n_{j_0-1}}v_{\lambda_{j_0-1}}\neq0$;
        \item 
        $\bigbra{X_{31}^{\op{alg}}}^{m_{j_0}}\bigbra{X_{32}^{\op{alg}}}^{n_{j_0}}w\neq0$.
    \end{enumerate}
    %\begin{enumerate}
        %\item 
        %$\bigbra{X_{31}^{\op{alg}}}^{\lambda_{j,1}-\lambda_{j,2}}\bigbra{X_{32}^{\op{alg}}}^{\lambda_{j,2}-\lambda_{j,3}}v_{\lambda_{j}}\neq0$ for $j\neq j_0,j_0-1$;
        %\item 
        %$\bigbra{X_{32}^{\op{alg}}}^{\max{0,\lambda_{j_0-1,1}-\lambda_{j_0-1,3}-(p-2)}}v_{\lambda_{j_0-1}}\neq0$;
        %\item 
        %$\bigbra{X_{31}^{\op{alg}}}^{\lambda_{j_0,1}-\lambda_{j_0,2}-1+\alpha_1}\bigbra{X_{32}^{\op{alg}}}^{\lambda_{j_0,2}-\lambda_{j_0,3}+\alpha_2}v_{\lambda_{j}}\neq0$.
    %\end{enumerate}
    Indeed, (i) follows from Lemma \ref{K-ext Lem nonzero alpha13}. For (ii), it follows from Lemma \ref{K-ext Lem nonzero alpha13} together with $\lambda_{j_0-1,1}-\lambda_{j_0-1,3}-(p-2)\leq\lambda_{j_0-1,2}-\lambda_{j_0-1,3}$ by assumption. For (iii), it follows from Lemma \ref{K-ext Lem nonzero alpha13} if $\alpha=\alpha_{13}$ and follows from Lemma \ref{K-ext Lem nonzero alpha12and23} if $\alpha=\alpha_{12}$. In the case $\alpha=0$, by \eqref{K-ext Eq I-structure on w31} we have $X_{23,j_0}w_{31,j_0}=-w_{21,j_0}$, which is nonzero by Lemma \ref{K-ext Lem I-structure}(i) since $k_{1,j_0}\neq k_{2,j_0}$. Then by Lemma \ref{K-ext Lem Y maps into} we have $X_{23}^{\op{alg}}w\neq0$, which implies (iii) by Lemma \ref{K-ext Lem nonzero alpha=0}.

    Therefore, we conclude that 
    \begin{equation*}
        0\neq\bbra{\sprod_{j\neq j_0-1}\bigbra{X_{31}^{(j),\op{alg}}}^{m_j}\bigbra{X_{32}^{(j),\op{alg}}}^{n_j}}\bigbra{X_{32}^{(j_0-1),\op{alg}}}^{n_{j_0-1}}w_{31,j_0}\in\ L(\lambda)_{w_0\mu'},
    \end{equation*}
    which completes the proof using \eqref{K-ext Eq Lem Xalpha injective 1}.
\end{proof}

Finally, we are ready to conclude. By Lemma \ref{K-ext Lem f(y) first weight} and \eqref{K-ext Eq Lem Xalpha injective 1}, we have 
\begin{equation*}
    f(y')=X_{13,j_0-1}f(y)\in\oplus_{\nu'}L(\lambda)_{\nu'}
\end{equation*}
with each $\nu'$ satisfies $\nu'_{j,2}\neq\mu_{j,2}=\mu'_{j,2}$ for some $j$. By Lemma \ref{K-ext Lem f(y) second weight}, we also know that $f(y')\in\sigma$ has nonzero image in $L(\lambda)_{\mu'}$. This is a contradiction and finishes the proof of Theorem \ref{K-ext Thm Calpha}.
\vspace{1em}

\noindent\textbf{Case 2.} $f=1$.

In this case, we provide the key construction and leave the details as an exercise. Since $\lambda\in C(\alpha,0)$ and $w_{31}\neq0$, by Lemma \ref{K-ext Lem I-structure} at least one of the following holds:
\begin{equation*}
\begin{split}
    \text{(i).}~&\alpha=\alpha_{13}~\text{and}~\lambda_1-\lambda_2\leq p-4;\\
    \text{(ii).}~&\alpha=\alpha_{12};\\
    \text{(iii).}~&\alpha=0~\text{and}~k_1\neq k_2;
\end{split}
\hspace{3em}
\begin{split}
    \text{(iv).}~&\alpha=\alpha_{13}~\text{and}~\lambda_2-\lambda_3\leq p-4;\\
    \text{(v).}~&\alpha=\alpha_{23};\\
    \text{(vi).}~&\alpha=0~\text{and}~k_2\neq k_3.
\end{split}
\end{equation*}
From now on, we concentrate on the cases (i),(ii),(iii), the other cases being symmetric. We consider the element
\begin{equation*}
    y\eqdef X_{13}^{p-2+\alpha_1}X_{12}^{\max\set{0,\lambda_1-\lambda_3-(p-3+\alpha_3)}}\smat{&&1\\&1&\\1&&}[1,v]\in\Ind_I^K W.
\end{equation*}
Arguing as in Lemma \ref{K-ext Lem image zero} and Lemma \ref{K-ext Lem f(y) first weight}, one can prove that $f(y)\in L(\lambda)_{\nu+(p-1)\alpha_{23}}$ with 
\begin{equation}\label{K-ext Eq f(y) first weight}
    \nu\eqdef w_0\bra{\lambda+\alpha}+(\alpha_1-1)\alpha_{13}+\max\set{0,\lambda_1-\lambda_3-(p-3+\alpha_3)}\alpha_{12}.
\end{equation}
Next, we consider the element (note that $X_{13}$ commutes with $X_{12}$)
\begin{equation*}
    y'\eqdef X_{13}^{2-\alpha_1}y=X_{12}^{\max\set{0,\lambda_1-\lambda_3-(p-3+\alpha_3)}}\smat{&&1\\&1&\\1&&}[1,w_{31}]\in\Ind_I^K W.
\end{equation*}
Arguing as in Lemma \ref{K-ext Lem f(y) second weight}, one can prove that $0\neq f(y')\in L(\lambda)_{\nu'}$ with
\begin{equation}\label{K-ext Eq f(y) second weight}
    \nu'\eqdef w_0(\lambda+\alpha)+\alpha_{13}+\max\set{0,\lambda_1-\lambda_3-(p-3+\alpha_3)}\alpha_{12}.
\end{equation}
By \eqref{K-ext Eq f(y) first weight} and Lemma \ref{K-ext Lem Y maps into}, we have 
\begin{equation*}
    f(y')=X_{13}^{2-\alpha_1}f(y)\in X_{13}^{2-\alpha_1}L(\lambda)_{\nu+(p-1)\alpha_{23}}\subseteq L(\lambda)_{\nu+(2-\alpha_1)\alpha_{13}+(p-1)\alpha_{23}}=L(\lambda)_{\nu'+(p-1)\alpha_{23}}.
\end{equation*}
By \eqref{K-ext Eq f(y) second weight}, we also have $f(y')\in L(\lambda)_{\nu'}\setminus\set{0}$. This is a contradiction and finishes the proof of Theorem \ref{K-ext Thm Calpha}.\qed

\section{The main result}\label{K-ext sec conclusion}

We keep the notation of the previous sections and finish the proof of Theorem \ref{K-ext Thm intro}. 

\begin{proposition}\label{K-ext Prop H1}
    There is an isomorphism of $K$-representations
    \begin{equation*}
        H^1(K_1/Z_1,\FF)\cong\oplus_{j=0}^{f-1}F(\alpha_{13})^{[j]}.
    \end{equation*}
\end{proposition}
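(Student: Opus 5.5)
The plan is to identify $H^1(K_1/Z_1,\FF)$ with $\Hom(K_1/Z_1,\FF)$ and to compute the abelianization mod $p$ of $K_1/Z_1$ explicitly. Since $K_1$ acts trivially on $\FF$, we have $H^1(K_1/Z_1,\FF)=\Hom_{\rm cont}(K_1/Z_1,\FF)$. For $p\geq5$ (indeed $p\geq3$ suffices) and $L/\Qp$ unramified, $K_1$ is a uniform pro-$p$ group. Its Frattini quotient is $K_1/K_2=K_1/K_1^p\cong\Mat_3(\Fq)$, via $\op{I}_3+pA\mapsto\ovl{A}$, with $K_2=K_1^p$ as recorded in \S\ref{K-ext sec argument}. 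Moreover $[K_1,K_1]\subseteq K_2$, since $(1+pA)(1+pB)(1+pA)^{-1}(1+pB)^{-1}\equiv 1+p^2[A,B]\ \mod\ p^3$. Hence $\Hom(K_1,\FF)=\Hom_{\Fp}(\Mat_3(\Fq),\FF)$.

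Next we pass to the quotient by $Z_1$. Its image in $K_1/K_2$ is the scalar matrices $\Fq\cdot\op{I}_3$, because $Z_1=1+p\OL$ maps onto $1+p\Fq$. We therefore get
\begin{equation*}
    \Hom(K_1/Z_1,\FF)\cong\Hom_{\Fp}\bigbra{\Mat_3(\Fq)/\Fq\op{I}_3,\FF}.
\end{equation*}
One should check that $Z_1K_2/K_2$ is exactly the scalars, and that no additional relations appear. The latter holds because any homomorphism from $K_1/Z_1$ to $\FF$ factors through $K_1/Z_1K_2$.

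It remains to identify the $K$-action. $K$ acts on $K_1$ by conjugation, and this induces on $K_1/K_2\cong\Mat_3(\Fq)$ the adjoint action of $\GL_3(\Fq)$, since $K_1$ acts trivially. So $H^1(K_1/Z_1,\FF)$ is $\Hom_{\Fp}(\mathfrak{pgl}_3(\Fq),\FF)$ with the contragredient adjoint action. Write $\Hom_{\Fp}(V,\FF)=\oplus_{j}\Hom_{\Fq,\sigma_j}(V,\FF)$, where $\sigma_j$ runs over the embeddings, i.e.\ decompose $V\otimes_{\Fp}\FF=\oplus_j V\otimes_{\Fq,\sigma_j}\FF$. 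Each summand is the $j$-th Frobenius twist of the dual of $\mathfrak{pgl}_3\otimes\FF$. The dual of $\mathfrak{pgl}_3$ is $\mathfrak{sl}_3$ via the trace pairing, which is perfect between $\mathfrak{gl}_3/\text{scalars}$ and $\mathfrak{sl}_3$. Finally $\mathfrak{sl}_3\otimes\FF\cong L(\alpha_{13})$ when $p\neq3$: the adjoint Weyl module $V(\alpha_{13})$ is irreducible since $\alpha_{13}\in C(1)$ as $2\leq p-3$, by \cite[Prop.~3.18]{Her09}. Restricting to $\GL_3(\Fq)$ gives $F(\alpha_{13})^{[j]}$.

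The main obstacle is the careful bookkeeping in the first step. We must verify that $[K_1,K_1]Z_1K_1^p$ is exactly $Z_1K_2$, so that no larger quotient survives. It also takes some care to check that the Frobenius twist indexing matches the paper's convention $F(\lambda)=\otimes_j F(\lambda_j)^{[j]}$; this is harmless here since the sum runs over all $j$.
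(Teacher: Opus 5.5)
Your proposal is correct and follows the same route as the paper: identify $H^1(K_1/Z_1,\FF)$ with homomorphisms out of the Frattini quotient $(K_1/Z_1)/(K_1/Z_1)^p\cong\Mat_3(\Fq)/\Fq$, which carries the adjoint $K$-action, then dualize and extend scalars to $\FF$ to split into the Frobenius twists $F(\alpha_{13})^{[j]}$. The only cosmetic difference is in the final identification. The paper uses the splitting $\Mat_3\cong F(\alpha_{13})\oplus\Fq$ (valid for $p\geq5$) together with self-duality of $F(\alpha_{13})$. You instead use the trace pairing to identify the dual of $\mathfrak{pgl}_3$ with $\mathfrak{sl}_3\cong L(\alpha_{13})$.
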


\begin{proof}
    %By \cite[Lemma~5.1]{DdSMS99} we have 
    Since $K_1^p=K_2$, we have a $K$-equivariant isomorphism
    \begin{equation*}
    \begin{aligned}
        (K_1/Z_1)/(K_1/Z_1)^p&=(K_1/Z_1)/(K_2Z_1/Z_1)\cong\Mat_3(\Fq)/\Fq\\
        &\cong\bbbra{\Fq^{\oplus3}\otimes\bigbra{\Fq^{\oplus3}}^{*}}/\Fq\cong\bigbra{F(\alpha_{13})_{/\Fq}\oplus\Fq}/\Fq\cong F(\alpha_{13})_{/\Fq}.
    \end{aligned}
    \end{equation*}
    where the first isomorphism is given by $A\mapsto(A-\op{I}_3)\mod p$, and the third isomorphism follows from \cite[Lemma~2]{BDM15} and using $p\geq5$. Hence we have
    \begin{equation*}
    \begin{aligned}
        H^1(K_1/Z_1,\FF)&=\Hom(K_1/Z_1,\FF)=\Hom\bigbra{(K_1/Z_1)/(K_1/Z_1)^p,\FF}\\
        &\cong\Hom\bigbra{F(\alpha_{13})_{/\Fq},\FF}\cong F(\alpha_{13})_{/\Fq}^*\otimes_{\Fp}\FF\cong F(\alpha_{13})_{/\Fq}\otimes_{\Fp}\FF\cong\oplus_{j=0}^{f-1}F(\alpha_{13})^{[j]},
    \end{aligned}
    \end{equation*}
    which completes the proof.
\end{proof}

\begin{definition}
    Suppose that $f=1$. Let $\lambda,\lambda'\in X_1(T)$. We say that $(\lambda,\lambda')$ is a \textbf{bad pair} if one of the following holds: 
\begin{enumerate}
    \item 
    $\lambda'=\lambda+\alpha_{13}$ and $\lambda|_{\SL_3}\in\set{(p-2,p-2),(p-2,p-3),(p-3,p-2),(p-3,p-3)}$;
    \item
    $\lambda'=\lambda+\alpha_{12}$ and $\lambda|_{\SL_3}=(p-3,b)$ with $1\leq b\leq p-1$;
    \item 
    $\lambda'=\lambda+\alpha_{23}$ and $\lambda|_{\SL_3}=(a,p-3)$ with $1\leq a\leq p-1$;
    \item 
    $\lambda'=\lambda$ and $\lambda|_{\SL_3}=(a,b)$ with either $\set{a,b}\cap\set{p-2,p-1}\neq\emptyset$ or $a+b=p-2$.
    %\item 
    %$\set{(p-2,p-2),(p-1,p-1)}$;
    %\item 
    %$\set{(p-2,0),(p-1,1)}$ or $\set{(0,p-2),(1,p-1)}$;
    %\item 
    %$\set{(p-3,1),(p-1,0)}$ or $\set{(1,p-3),(0,p-1)}$;
    %\item 
    %$\set{(a,p-1),(a+2,p-2)}$ or $\set{(p-1,a),(p-2,a+2)}$ with $0\leq a\leq p-3$;
    %\item 
    %$\set{(a,b),(a,b)}$ with $\set{a,b,a+b}\cap\set{p-2,p-1}\neq\emptyset$.
\end{enumerate}
\end{definition}

\begin{definition}
    Suppose that $f\geq2$. Let $\lambda,\lambda'\in X_1(\un{T})$. We say that $(\lambda,\lambda')$ is a \textbf{bad pair} if one of the following holds: 
\begin{enumerate}
    \item 
    $\lambda'=\lambda+\alpha_{13,j_0}$ for some $j_0\in\cJ$ and $\lambda_{j_0-1}|_{\SL_3}=(p-1,p-1)$;
    \item
    $\lambda'=\lambda+\alpha_{12,j_0}$ for some $j_0\in\cJ$ and [either $\lambda_{j_0-1,1}-\lambda_{j_0-1,2}=p-1$, or $\lambda_{j_0}|_{\SL_3}=(p-3,1)$ and $\lambda_{j,1}-\lambda_{j,3}\geq p-1~\forall\,j\neq j_0$];
    \item
    $\lambda'=\lambda+\alpha_{23,j_0}$ for some $j_0\in\cJ$ and [either $\lambda_{j_0-1,2}-\lambda_{j_0-1,3}=p-1$, or $\lambda_{j_0}|_{\SL_3}=(1,p-3)$ and $\lambda_{j,1}-\lambda_{j,3}\geq p-1~\forall\,j\neq j_0$];
    \item 
    $\lambda'=\lambda$ and [either $\lambda_{j,1}-\lambda_{j,2}=p-1$ for some $j$, or $\lambda_{j,2}-\lambda_{j,3}=p-1$ for some $j$].
\end{enumerate}
\end{definition}

\begin{theorem}\label{K-ext Thm main}
    Let $\lambda,\lambda'\in X_1(\un{T})$ and $\sigma=F(\lambda)$, $\tau=F(\lambda')$ be two Serre weights for $\GL_3(\Fq)$. Suppose that both $(\lambda,\lambda')$ and $(\lambda',\lambda)$ are not bad pairs. Then we have
    \begin{equation*}
        \Ext^1_{K/Z_1}(\tau,\sigma)\cong\Ext^1_{\GL_3(\Fq)}(\tau,\sigma).
    \end{equation*}
\end{theorem}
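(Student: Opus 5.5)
The plan is to compare $K/Z_1$-extensions with $\GL_3(\Fq)$-extensions via the inflation–restriction sequence for the normal subgroup $K_1/Z_1\subseteq K/Z_1$, whose quotient is $\GL_3(\Fq)$. Since $\tau$ and $\sigma$ are trivial on $K_1$, we get the exact sequence
\begin{equation*}
0\to\Ext^1_{\GL_3(\Fq)}(\tau,\sigma)\to\Ext^1_{K/Z_1}(\tau,\sigma)\to\Hom_{\GL_3(\Fq)}\bigbra{\tau,\sigma\otimes H^1(K_1/Z_1,\FF)}.
\end{equation*}
So the theorem amounts to showing that the restriction map to the last term is zero. By Proposition \ref{K-ext Prop H1}, the last term is $\oplus_{j_0\in\cJ}\Hom_{\GL_3(\Fq)}\bigbra{\tau,\sigma\otimes F(\alpha_{13})^{[j_0]}}$, and it vanishes unless $\tau$ lies in the socle of some $\sigma\otimes F(\alpha_{13})^{[j_0]}$. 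Since $F(\alpha_{13})$ is self-dual, this is equivalent to $\sigma$ lying in the socle of $\tau\otimes F(\alpha_{13})^{[j_0]}$.

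\textbf{Step 1.} Use Corollary \ref{K-ext Cor Tensor Socle}. If the last term is nonzero, then $\tau\cong F(\lambda+p^{j_0}\alpha)$ for some $\alpha\in\Phi\cup\set{0}$ such that $(\lambda_{j_0},\lambda_{j_0}+\alpha)$ is a good pair. After possibly swapping $\sigma$ and $\tau$ (the hypothesis is symmetric), we may assume $\alpha\in\Phi^+\cup\set{0}$. Here I would rewrite $p^{j_0}\alpha$ as $\alpha_{j_0}$ in $X_1(\un{T})$ notation, taking care with the identification $F(\mu)\cong F(\mu')$ modulo $(q-1)X_0(T)$. By Remark \ref{K-ext Rk Calpha+}, the good-pair condition means $\lambda_{j_0}\in C(\alpha)^+$. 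If no such $(j_0,\alpha)$ exists, both groups agree trivially. The symmetry step needs some care: $\Ext^1(\tau,\sigma)\cong\Ext^1(\sigma^*,\tau^*)$ and dualizing Serre weights permute the parameters via $-w_0$. Alternatively one applies the corollary to the pair in the opposite order. Either way the hypothesis on both $(\lambda,\lambda')$ and $(\lambda',\lambda)$ covers it.

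\textbf{Step 2.} Check that ``not a bad pair'' together with $\lambda_{j_0}\in C(\alpha)^+$ forces $\lambda\in C(\alpha,j_0)$ (Definitions \ref{K-ext Def Calpha} and \ref{K-ext Def Calphaj0}). This is a case-by-case comparison.

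For $f\geq2$ and $\alpha=\alpha_{13}$, the condition $\lambda_{j_0-1,1}-\lambda_{j_0-1,3}\leq 2p-3$ is exactly the negation of $\lambda_{j_0-1}|_{\SL_3}=(p-1,p-1)$. The cases $\alpha=\alpha_{12},\alpha_{23}$ match bad-pair items (ii) and (iii) directly. For $\alpha=0$, the condition in $C(0,j_0)$ is the negation of item (iv).

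For $f=1$, the extra constraints defining $C(\alpha,0)$ are matched as follows:
\begin{itemize}
\item[] $\min\set{\lambda_1-\lambda_2,\lambda_2-\lambda_3}\leq p-4$ for $\alpha_{13}$;
\item[] $\lambda_1-\lambda_2\leq p-4$ for $\alpha_{12}$;
\item[] $\lambda_1-\lambda_2,\lambda_2-\lambda_3\leq p-3$ and $\lambda_1-\lambda_3\neq p-2$ for $\alpha=0$.
\end{itemize}
Each of these should be the complement, inside $C(\alpha)^+$, of the corresponding bad-pair list. For instance, $(p-3,b)$ with $b\geq1$ is excluded exactly when $\lambda_1-\lambda_2=p-3$.

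\textbf{Step 3.} Once $\lambda\in C(\alpha,j_0)$ holds, Theorem \ref{K-ext Thm Calpha} gives the isomorphism. That result shows every extension is $K_1$-invariant, which is the same as the restriction map being zero.

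The main obstacle is the bookkeeping in Step 2 together with the symmetry reduction of Step 1. One must make sure every good pair with $\alpha\in\Phi^-$ is genuinely handled by swapping, including in the degenerate case $\lambda_{j_0}\in X_0(T)$ where the corollary gives only an inclusion. One must also make sure the boundary configurations, such as $\lambda_1-\lambda_3=p-2$ for $\alpha\neq\alpha_{13}$ excluded in Lemma \ref{K-ext Lem Y isomorphism}, are either not good pairs or are listed as bad. I would first tabulate $C(\alpha)^+\setminus C(\alpha,j_0)$ for each $\alpha$ and compare it line by line with the bad-pair definitions.
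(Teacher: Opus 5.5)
Your proposal follows the paper's proof. The steps match: the inflation--restriction sequence with Proposition~\ref{K-ext Prop H1}; Corollary~\ref{K-ext Cor Tensor Socle} and Remark~\ref{K-ext Rk Calpha+} to get $\lambda_{j_0}\in C(\alpha)^+$; the case check that non-badness forces $\lambda\in C(\alpha,j_0)$ (your tabulation is right); Theorem~\ref{K-ext Thm Calpha}; and, for $\alpha\in\Phi^-$, passing to the pair $(-w_0\lambda',-w_0\lambda)$ via $\Ext^1(\tau,\sigma)\cong\Ext^1(\sigma^\vee,\tau^\vee)$, where non-badness of $(\lambda',\lambda)$ is what gets used. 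Only the duality version of your symmetry step works: a plain swap, or applying the corollary in the opposite order, yields $\Ext^1(\sigma,\tau)$ rather than $\Ext^1(\tau,\sigma)$. Also, only the inclusion in Corollary~\ref{K-ext Cor Tensor Socle} is needed, so the case $\lambda_{j_0}\in X_0(T)$ causes no trouble.
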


\begin{proof}
    For all $K/Z_1$-representations $V$ we have $\Hom_{K/Z_1}(-,V)=\Hom_{\GL_3(\Fq)}(-,V^{K_1})$, hence the Grothendieck spectral sequence gives an exact sequence
    \begin{equation*}
        0\to\Ext^1_{\GL_3(\Fq)}(\tau,\sigma)\to\Ext^1_{K/Z_1}(\tau,\sigma)\to\Hom_{\GL_3(\Fq)}\!\bigbra{\tau,H^1(K_1/Z_1,\sigma)},
    \end{equation*}
    where the last group is isomorphic to $\oplus_{j=0}^{f-1}\Hom_{\GL_3(\Fq)}\!\bigbra{\tau,\sigma\otimes F(\alpha_{13})^{[j]}}$ by Proposition \ref{K-ext Prop H1}. In particular, the result follows if $\Hom_{\GL_3(\Fq)}\!\bigbra{\tau,\sigma\otimes F(\alpha_{13})^{[j]}}=0$ for all $j$. 

    From now on, we assume that $\Hom_{G(\Fp)}\!\bigbra{\tau,\sigma\otimes F(\alpha_{13})^{[j_0]}}\neq0$ for some $j_0\in\cJ$, i.e.\;$\tau\in\soc_{G(\Fp)}\bra{\sigma\otimes F(\alpha_{13})^{[j_0]}}$. By Corollary \ref{K-ext Cor Tensor Socle}, there exists $\alpha\in\Phi\cup\set{0}$ such that $\lambda'=\lambda+\alpha_{j_0}$ and $(\lambda_{j_0},\lambda'_{j_0})$ is a good pair in the sense of Definition \ref{K-ext Def good pair}.

    Suppose that $\alpha\in\Phi^+\cup\set{0}$. Then by Remark \ref{K-ext Rk Calpha+} we have $\lambda_{j_0}\in C(\alpha)^+$. Since $(\lambda,\lambda')$ is not a bad pair, a case-by-case examination shows that $\lambda\in C(\alpha,j_0)$ (see Definition \ref{K-ext Def Calpha} and Definition \ref{K-ext Def Calphaj0}). Hence we conclude by Theorem \ref{K-ext Thm Calpha}.

    Suppose that $\alpha\in\Phi^-$. Since we have 
    $\sigma^{\vee}\cong F(-w_0\lambda)$ and $\tau^{\vee}\cong F(-w_0\lambda')$, by duality it suffices to prove the result for the pair $(-w_0\lambda',-w_0\lambda)$. Then we have $-w_0\lambda=-w_0\lambda'+w_0\alpha_{j_0}$ with $w_0\alpha\in\Phi^+$. We also observe that $\lambda_{j_0}|_{\SL_3}=(a,b)$ implies $\bra{-w_0\lambda_{j_0}}|_{\SL_3}=(b,a)$. Since $(\lambda_{j_0},\lambda'_{j_0})$ is a good pair, by Remark \ref{K-ext Rk Calpha+} we have $\lambda'_{j_0}\in C(-\alpha)^+$. Then from the definition it is straightforward to check that $-w_0\lambda'_{j_0}\in C(w_0\alpha)^+$. Moreover, since $(\lambda',\lambda)$ is not a bad pair, from the definition we see that $(-w_0\lambda',-w_0\lambda)$ is also not a bad pair. Hence we are reduced to the case $\alpha\in\Phi^+$ discussed in the previous paragraph. This completes the proof.
\end{proof}

\bibliography{1}
\bibliographystyle{alpha}

\end{document}